\documentclass[12pt]{extarticle}

\usepackage{amsmath, amsthm, amssymb}
\usepackage[colorlinks=true,linkcolor=blue,urlcolor=blue]{hyperref}
\usepackage{graphicx}
\usepackage{caption}
\usepackage{mathtools}
\usepackage{enumerate}
 \usepackage{comment}

\usepackage{cleveref}
\usepackage{verbatim}
\usepackage{tikz,tikz-cd,tikz-3dplot}
\usetikzlibrary{matrix}
\usetikzlibrary{arrows}
\usepackage{algorithm}
\usepackage[noend]{algpseudocode}
\usepackage[normalem]{ulem}
\usepackage{subcaption}
\usepackage{multicol}
\oddsidemargin \evensidemargin
\usepackage{makecell}
\usepackage{array}
\usepackage{enumitem}

 \newtheorem{theorem}{Theorem}[section]

\newtheorem{proposition}[theorem]{Proposition}
\newtheorem{lemma}[theorem]{Lemma}
\newtheorem{corollary}[theorem]{Corollary}

\newtheorem{conjecture}[theorem]{Conjecture}

\newtheorem{cor}[theorem]{Corollary}
\theoremstyle{definition}

\newtheorem{remark}[theorem]{Remark}
\newtheorem{example}[theorem]{Example}

\newcommand{\PP}{\mathbb{P}}

\newcommand{\QQ}{\mathbb{Q}}
\newcommand{\CC}{\mathbb{C}}
\newcommand{\ZZ}{\mathbb{Z}}

\newcommand{\cH}{\mathcal{H}}
\newcommand{\cE}{\mathcal{E}}

\newcommand{\cF}{\mathcal{F}}
\newcommand{\cJ}{\mathcal{J}}

\title{\bf Eigenvector Varieties}

\author{Sandra Di Rocco, Bernd Sturmfels and Svala Sverrisd\'ottir}

\date{}
\begin{document}

\maketitle

\begin{abstract} Any linear space of square matrices has an associated eigenvector variety.
Its points are  eigenvectors of matrices from that linear space.
We present a systematic study of eigenvector varieties,
with focus on Lie algebras and Hamiltonians of quantum systems.
\end{abstract}





\section{Introduction}
Let $\cH \simeq \CC^d$ be a linear space of $n \times n$ matrices.
The general element of $\cH$ is an $n \times n$ matrix $H$ whose
entries are linear forms with complex coefficients in $d$ variables
$t=(t_1,\ldots,t_d)$. We assume that ${\rm det}(H)$ is non-zero: it is a non-zero  homogeneous polynomial of degree $n$. Each eigenvector of $H$ gives a point
$x=x(t)$ in projective space $\PP^{n-1}$. The multivalued algebraic function
$t\mapsto x(t)$ parametrizes a subvariety of $\PP^{n-1}$. This is the
\emph{eigenvector variety} $\cE(\cH)$.

\begin{example}[$n=4,d=5$] \label{ex:twisted}
Let $\cH$ be the five-dimensional space of $4 \times 4$ matrices
\[
H=\begin{bmatrix}
0 & t_1 & 0 & 0 \\
0 & 0 & t_1 & 0 \\
0 & 0 & 0 & t_1 \\
t_2 & t_3 & t_4 & t_5
\end{bmatrix}.
\]
Writing $\lambda=\lambda(t)$ for the eigenvalues of $H$, the four points in
$\PP^3$ representing eigenvectors are
\[
        x(t)= [\,t_1^3:t_1^2\lambda:t_1\lambda^2:\lambda^3\,]^T .
\]
This parametrization shows that the eigenvector variety is the {\em twisted cubic curve}
\[
\cE(\cH)\,=\,\{x\in\PP^3: x_1x_3-x_2^2=x_1x_4-x_2x_3=x_2x_4-x_3^2=0\}.
\]
Our derivation assumes $H$ is a general matrix of the given form. In
particular, $t_1\not=0$. Otherwise, every point $x\in\PP^3$ is a zero-eigenvalue
eigenvector for a suitable choice of $t_2,t_3,t_4,t_5$.
\end{example}

\begin{example}[$n=4,d=6$] \label{ex:skew}
Let $\cH$ be the space of skew-symmetric matrices of size $4\times4$. Its eigenvector
variety is a surface in complex projective $3$-space
that has no real points:
\begin{equation}\label{eq:norealpoints}
        \cE(\cH)\,=\,\{x\in\PP^3:x_1^2+x_2^2+x_3^2+x_4^2=0\}.
\end{equation}
Indeed, if $Hx=\lambda x$ then $x^THx=\lambda x^Tx=0$. For a general $H$, we have
$\lambda\ne0$,~so~$x^Tx=0$.
\end{example}

The \emph{full eigenvector variety} $\cF(\cH)$ is the
Zariski closure of all eigenvectors with nonzero eigenvalue of 
any matrix in $\cH$. 
We have $\mathcal{E}(\cH) \subseteq \mathcal{F}(\cH)$.
Equality holds
generically but the two can differ dramatically; see Example~\ref{ex:diagonal}. The
precise relationship is studied in Section~\ref{sec3}.

\begin{example}[$n=4,d=4$] \label{ex:diagonal}
Let $\cH$ be the space of diagonal $4\times4$ matrices. The entries of
the general $H$ are distinct and nonzero. Hence the eigenvector variety consists of four~points:
\[
\cE(\cH)=\{[1:0:0:0],[0:1:0:0],[0:0:1:0],[0:0:0:1]\}\subset\PP^3 .
\]
But, we have $\mathcal{F}(\cH)=\PP^3$, since
every vector is an eigenvector
of some special matrix in $\cH$.

\end{example}

To study the variety $\cE(\cH)$ in general, we parametrize the subspace
$\cH\subset\CC^{n\times n}$ by a matrix
\begin{equation}\label{eq:Hexpansion}
        H=t_1H_1+t_2H_2+\cdots+t_dH_d .
\end{equation}
Here each $H_i$ is a complex $n\times n$ matrix. The $n$ entries of the column
vector $H_i x$ are linear forms in $x=(x_1,\ldots,x_n)$. We write these $d$
vectors as the columns of the $n\times d$ matrix
\begin{equation}\label{eq:Hx}
        M(x)= [\,H_1x\,|\,H_2x\,|\,\cdots\,|\,H_dx\,] .
\end{equation}
Finally, we append the column vector $x$ on the right to obtain the
$n\times(d+1)$ matrix
\begin{equation}\label{eq:Hxx}
        \widehat M(x)= [\,H_1x\,|\,H_2x\,|\,\cdots\,|\,H_dx\,|\,x\,] .
\end{equation}
If $x\in\cE(\cH)$, then some vector of the form
$[t_1,t_2,\ldots,t_d,-\lambda]^T$ lies in the kernel of \eqref{eq:Hxx}. Thus all
$(d+1)\times(d+1)$ minors of \eqref{eq:Hxx} vanish on $\cE(\cH)$. This is only
a necessary condition in general. Example~\ref{ex:diagonal} shows that it is
not sufficient. It is sufficient for generic matrix spaces.

In Section \ref{sec2} we study the generic case. We provide an explicit description of $\cE(\cH)$:

\begin{theorem}\label{thm:generic}
Let $H_1,\ldots,H_d$ be generic $n\times n$ matrices. The radical ideal of its eigenvector variety is generated by the $(d+1)\times(d+1)$ minors of $\widehat M(x)$.
Its dimension is 
$\min({d,n})-1$ and its degree is $\binom nd$ for $d \le n$. For $d \ge 2$
it is irreducible. If $d = 1$ it~consists~of~$n$~reduced~points.
\end{theorem}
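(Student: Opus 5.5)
The plan is to compare $\cE(\cH)$ with the degeneracy locus
\[
D=\{x\in\PP^{n-1} : \operatorname{rank}\widehat M(x)\le d\},
\]
and then apply the standard theory of determinantal ideals (Eagon--Northcott, Thom--Porteous). The case $d>n$ is trivial. There $\widehat M(x)$ has $d+1>n$ columns, so it has no $(d+1)\times(d+1)$ minors and $D=\PP^{n-1}$. Also every $x$ is an eigenvector of some general $H(t)$: for fixed $x$, the conditions $H(t)x\in\CC x$ are $n-1$ linear conditions on $t\in\CC^d$. This gives dimension $n-1=\min(d,n)-1$. From now on assume $d\le n$.

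\textbf{Step 1: set-theoretic equality $\cE(\cH)=D$ via an incidence variety.} Consider
\[
I=\{([x],[t:\lambda])\in\PP^{n-1}\times\PP^{d} : H(t)x=\lambda x\}.
\]
\begin{itemize}
\item A kernel vector $(t,-\lambda)$ of $\widehat M(x)$ with $t=0$ forces $\lambda x=0$, which is impossible. So $D$ is exactly the image of $I$ under the first projection.
\item The second projection maps $I$ onto the hypersurface $Z=\{\det(H(t)-\lambda\,\mathrm{Id})=0\}\subset\PP^d$.
\item For generic $H_i$, a general point of $Z$ gives a matrix $H(t)-\lambda\,\mathrm{Id}$ of corank one, so the fiber is a single point. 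The locus where the corank is at least $2$ has codimension $4$ among all matrices. A generic linear family of dimension $d+1$ meets it in the expected codimension, so those fibers have too small dimension to create extra components.
\end{itemize}
Hence $I$ is irreducible of dimension $d-1$ as soon as $Z$ is irreducible. For $d\ge2$, irreducibility of $Z$ follows from a Bertini-type argument: $Z$ is a generic linear section, of dimension at least $2$, of the irreducible variety of singular matrices in the projectivized span of $\mathrm{Id},H_1,\dots,H_d$. The first projection $I\to\PP^{n-1}$ is generically finite, since a general eigenvector determines $(t,\lambda)$ up to scale when $d\le n$. Therefore $D=\overline{\mathrm{pr}_1(I)}$ is irreducible of dimension $d-1$. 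It equals $\cE(\cH)$, because the image of the open set of general $t$ is dense in $I$.

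\textbf{Step 2: expected codimension, hence Cohen--Macaulay and degree.} The maximal minors of an $n\times(d+1)$ matrix of linear forms define a locus of expected codimension $n-d$. By Step 1 the actual codimension is $(n-1)-(d-1)=n-d$, which is expected. The Eagon--Northcott complex is then a resolution, so the ideal $J$ of maximal minors is Cohen--Macaulay and unmixed. Its degree is given by Thom--Porteous for the map $\mathcal O^{d+1}\to\mathcal O(1)^n$. The relevant class is $c_{n-d}$ of $\mathcal O(1)^n-\mathcal O^{d+1}$, namely $\binom{n}{n-d}h^{n-d}$, so the degree is $\binom nd$.

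\textbf{Step 3: radicality (the main obstacle).} Since $J$ is unmixed and $D$ is irreducible, it suffices to show that $J$ is generically reduced. Equivalently, at one smooth point of $\cE(\cH)$ the Jacobian of the minors should have rank $n-d$. I would establish this by a specialization and semicontinuity argument. Choose explicit $H_i$, for instance diagonal $H_1$ with distinct entries and other $H_i$ chosen so that $\widehat M$ becomes a generic-like linear section of the $n\times(d+1)$ matrix space. Then exhibit one point where $\widehat M(x)$ has rank exactly $d$ and its tangent space has the correct dimension $d-1$.

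Equivalently, this is a transversality statement. The linear map $x\mapsto\widehat M(x)$ should meet the rank-$\le d$ determinantal variety transversally at a general point of $D$. The difficulty is that the last column $x$ is not generic, so a plain Bertini argument does not apply. This is why I expect to need the explicit tangent computation.

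\textbf{The case $d=1$.} Here $\widehat M=[H_1x\mid x]$ and the $2\times2$ minors cut out the eigenvectors of $H_1$. For generic $H_1$ there are $n$ distinct eigenvalues, so these are $n$ points. Each is reduced, which one checks directly in an eigenbasis where $H_1$ is diagonal. This matches degree $\binom n1=n$ and dimension $0$.
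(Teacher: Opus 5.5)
\textbf{Gap: Step 3 (radicality) is not carried out.} Steps 1 and 2 are essentially sound. Step 3, however, is where the first sentence of the theorem lives, and there you only describe a strategy: you give no explicit $H_i$, no point of $\cE(\cH)$, and no tangent-space computation. The degree depends on this step too. Thom--Porteous computes the degree of the scheme cut out by the maximal minors. That scheme is unmixed with a single minimal prime, so its degree is $\mu\cdot\deg\cE(\cH)$, where $\mu$ is the multiplicity at the generic point, and you need $\mu=1$. As written, you have shown three things: the minors cut out $\cE(\cH)$ set-theoretically, it is irreducible of dimension $d-1$ for $d\ge2$, and the minors define a Cohen--Macaulay scheme of degree $\binom nd$. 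You have not shown that the ideal is radical.

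\textbf{The missing idea.} The obstacle you cite, that the last column $x$ is not generic, is not real. By Cauchy--Binet, $I_{d+1}(A\widehat M(x))=I_{d+1}(\widehat M(x))$ for every invertible $A$, and $A\widehat M(x)=[AH_1x|\cdots|AH_dx|Ax]$. The map $(A,H_1,\ldots,H_d)\mapsto(AH_1,\ldots,AH_d,A)$ is an isomorphism from $\mathrm{GL}_n\times(\CC^{n\times n})^d$ onto the dense open set of tuples $(A_1,\ldots,A_{d+1})$ with $A_{d+1}$ invertible. Hence, for generic $H_i$, the maximal-minor ideal of $\widehat M(x)$ is that of a generic $n\times(d+1)$ matrix of linear forms. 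Equivalently, it is the ideal of a general $n$-dimensional linear section of the variety of rank-$\le d$ matrices. This is the remark the paper makes right after its proof, and it is why the paper can invoke standard determinantal theory for primality. Bertini on the smooth locus of that determinantal variety gives generic reducedness, and your unmixedness argument then gives primality.

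\textbf{An alternative within your own setup.} On the open set where $\operatorname{rank}\widehat M(x)=d$, a Schur-complement computation identifies the maximal-minor scheme with the incidence scheme $\{\widehat M(x)y=0\}$. That scheme is smooth by the paper's Bertini argument on the projective bundle $\mathcal U$, since hyperplanes through $[0_n:1]$ form a base-point-free system.

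\textbf{Two smaller repairs in Step 1.} First, $\operatorname{span}(\mathrm{Id},H_1,\ldots,H_d)$ is not a general linear family. Your expected-dimension and Bertini claims should instead be justified by projecting from $[\mathrm{Id}]$, which is finite on the singular matrices. Second, generic finiteness of $I\to\PP^{n-1}$ is asserted, not proved. It follows from $\dim I=d-1$ together with the Eagon--Northcott bound that every component of $D$ has dimension at least $d-1$.
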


In the rest of the paper, $H_1,\ldots,H_d$ are arbitrary special
matrices, provided ${\rm det}(H)\not=0$. A key object is the incidence
variety $\mathcal{I}(\cH)$ in $\PP^d\times\PP^{n-1}$ defined by \eqref{eq:I(H)}. This is the closure of all points
$((t:\lambda),x)$ satisfying
$Hx=\lambda x$ and $\lambda\ne0$. Its image in $\PP^{n-1}$ is the full
eigenvector variety $\cF(\cH)$. The
incidence variety typically has many irreducible components. 
The vertical components are those that lie over special loci in $\PP(\cH)$. Only the horizontal components contribute
to $\cE(\cH)$. Their union is the horizontal incidence variety $\cJ(\cH)$. 
This is the focus of Section~\ref{sec3}. We show that the horizontal components are
in bijection with the irreducible factors of the characteristic polynomial $\chi_\cH(t,\lambda)$, which is
homogeneous of degree~$n$.

In Section \ref{sec4} we consider the $n \times d$ matrix $M(x)$ from \eqref{eq:Hx}. In particular we care about its rank strata. 
The dimension of an irreducible component $V$ in $\cE(\cH)$ is governed by the rank of $M(x)$ over $V$ along with the generic geometric multiplicity in $V$. 
In particular, when $\chi_\cH$ is squarefree, every irreducible component 
of $\cE(\cH)$ has dimension $\operatorname{rank} M(x) - 1$, 
where $x$ is a general point on that component. 
This provides us with tools to construct linear matrix spaces $\cH$ with low dimensional eigenvector varieties, using matrices $M(x)$ of low rank.

\begin{example}[$n=4,d=6$] \label{ex:skew2}
For the skew-symmetric matrices of Example~\ref{ex:skew} we have
$$  M(x) \,\,=\,\,
\begin{bmatrix}
 \phantom{-}x_2 & \phantom{-} x_3 & \phantom{-} x_4 & \,0 & \,0 & 0\, \\
-x_1 & \,0  &\, 0   & \phantom{-} x_3 & \phantom{-} x_4 & 0 \,\\
 \,0 & - x_1 &\, 0  & -x_2 & \, 0 & \!\phantom{-} x_4 \,\,\\
  \,0  & \,0 & -x_1 & \,0    & -x_2  & \!-x_3\,\,
\end{bmatrix}. \qquad
$$
This matrix has rank $3$. The rank does not drop over the quadric \eqref{eq:norealpoints}.
Thus the rank formula gives $\dim \cE(\cH)=2$.
The equation of the eigenvector surface 
$\cE(\cH)$ is the greatest common divisor
of all $4 \times 4$ minors of the augmented matrix $\widehat M(x)$ from (\ref{eq:Hxx}). See also
 Proposition \ref{prop:syzygy}.
\end{example}

The theory of Lie algebras is a fruitful source of special matrix spaces. In
Example~\ref{ex:skew} we had $\cH=\mathfrak{so}_4$, and the quadric
\eqref{eq:norealpoints} is the projective orbit of an isotropic line. In
Section~\ref{sec5} we express the eigenvector variety of 
a reductive Lie algebra $\mathfrak g$
in terms of its Lie group $G$.

\begin{theorem}\label{thm:intro-lie}
For a reductive matrix Lie algebra $\mathfrak g\subset\mathfrak{gl}(V)$ with
weight spaces $V_a$, we have
\begin{equation}\label{eq:LieEigVecs}
    \cE(\mathfrak g)\,\,=\,\,\bigcup_a \,\overline{G\cdot\PP(V_a)} .
\end{equation}
\end{theorem}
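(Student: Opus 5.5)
The plan is to reduce to a Cartan subalgebra by conjugation. Let $G$ be the connected reductive group with Lie algebra $\mathfrak g$, let $\mathfrak h\subset\mathfrak g$ be a Cartan subalgebra, and let $V=\bigoplus_a V_a$ be the weight decomposition, $V_a=\{v: Hv=a(H)v \ \text{for all}\ H\in\mathfrak h\}$. Here $a$ ranges over the weights of $\mathfrak h$ on $V$; the zero weight is allowed if it occurs. I will use two standard facts. First, a general element of $\mathfrak g$ is regular semisimple, so it is $\operatorname{Ad}(G)$-conjugate to a regular element of $\mathfrak h$. Second, the union of the conjugates $\bigcup_{g\in G} g\,\mathfrak h_{\rm reg}\,g^{-1}$ contains a dense open subset of $\mathfrak g$. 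Since $\cE(\mathfrak g)$ is defined as the closure of eigenvectors of a general $H\in\mathfrak g$, the whole computation reduces to eigenvectors of general elements of $\mathfrak h$.

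\textbf{Step 1 (eigenvectors of a general element of $\mathfrak h$).} For general $H_0\in\mathfrak h$, the values $a(H_0)$ of distinct weights are distinct. Hence the eigenspaces of $H_0$ are exactly the weight spaces $V_a$, and its projectivized eigenvectors form $\bigcup_a\PP(V_a)$. The avoided locus is a finite union of hyperplanes $\{a=b\}$ in $\mathfrak h$, which is proper because the weights are distinct linear forms.

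\textbf{Step 2 (transport by $G$).} If $H=gH_0g^{-1}$, then the eigenvectors of $H$ are $g\cdot(\text{eigenvectors of }H_0)$, since the representation of $G$ on $V$ intertwines. Thus the eigenvectors of a general $H\in\mathfrak g$ are $\bigcup_a g\cdot\PP(V_a)$ with $g\in G$ and $H_0$ general in $\mathfrak h$. Let $U=\{gH_0g^{-1}\}$ range over a dense open (constructible, containing an open) subset of $\mathfrak g$. Then the set of eigenvectors of elements of $U$ is exactly $\bigcup_a G\cdot\PP(V_a)$: every $g$ occurs, and every point of $\PP(V_a)$ occurs. Taking Zariski closures gives both inclusions. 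On one side, $\cE(\mathfrak g)\subseteq\bigcup_a\overline{G\cdot\PP(V_a)}$. On the other, $\cE(\mathfrak g)$ contains each $G\cdot\PP(V_a)$.

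\textbf{Main obstacle: the definition of $\cE$ as a closure over general $t$.} The delicate point is to make precise that $\cE(\mathfrak g)$ equals the closure of eigenvectors of $H$ ranging over any dense open subset of $\mathfrak g$. In the paper's language this means checking that the horizontal incidence variety $\cJ(\mathfrak g)$ is the closure of its restriction over any dense open set of $t$, because horizontal components dominate $\PP(\cH)$. There is also a subtlety about the zero weight. The eigenvectors defining $\cE$ for a general $H$ include those with eigenvalue $0$ when $\det H$ vanishes identically, which contradicts the standing assumption $\det(H)\neq0$ unless $V_0=0$. So under that assumption the zero weight does not occur. If one allows it, $V_0$ is still an eigenspace of the general $H_0$ and the argument is unchanged.

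Finally, I will need $\operatorname{Ad}(G)\mathfrak h$ to be dense in $\mathfrak g$. This is the standard statement that the map $G\times\mathfrak h\to\mathfrak g$ is dominant, which holds because regular semisimple elements are dense in a reductive Lie algebra; the center lies inside $\mathfrak h$ and causes no trouble.
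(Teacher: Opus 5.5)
Your route is the paper's route: conjugate a general element into the Cartan subalgebra, avoid the hyperplanes where distinct weights agree so that the eigenspaces are the $V_a$, transport by $G$, and take closures. The inclusion $\cE(\mathfrak g)\subseteq\bigcup_a\overline{G\cdot\PP(V_a)}$ goes through as you say. Each horizontal component dominates $\PP(\mathfrak g)$, so $\cJ(\mathfrak g)$ is the closure of its part over $U$, and the image of that part consists of eigenvectors of elements of $U$.

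The reverse inclusion, however, rests on a principle that is false. $\cE(\mathfrak g)$ is \emph{not} in general the closure of the eigenvectors of $H$ ranging over an arbitrary dense open subset of $\mathfrak g$, even for reductive $\mathfrak g$. Take the diagonal matrices of Example~\ref{ex:diagonal}, the Lie algebra of a torus. The dense open set $\{\det H\neq 0\}$ contains ${\rm I}_4$, so its eigenvectors fill $\PP^3$, while $\cE$ is four points. Density of $\cJ|_U$ in $\cJ$ cannot repair this. The eigenvectors of elements of $U$ form the image of $\mathcal{I}(\mathfrak g)|_U$, not of $\cJ(\mathfrak g)|_U$, and vertical components may lie over closed sets meeting $U$.

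What you must show is that your particular $U$ carries no such contributions, i.e.\ $G\cdot\PP(V_a)\subseteq\cE(\mathfrak g)$. One way: for a weight $a\neq 0$, let $\mathfrak h^\circ\subset\mathfrak h$ be the complement of the hyperplanes $\{b=c\}$ and $\{a=0\}$, and consider
\[
G\times\mathfrak h^\circ\times\PP(V_a)\longrightarrow\PP(\mathfrak g\oplus\CC)\times\PP(V),\qquad (g,H_0,[v])\mapsto\bigl((gH_0g^{-1}:a(H_0)),[gv]\bigr).
\]
The source is irreducible. The image lies in $\mathcal{I}(\mathfrak g)$, since it consists of solutions of $Hx=\lambda x$ with $\lambda\neq 0$ and $x\neq 0$. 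It dominates $\PP(\mathfrak g)$ because $\operatorname{Ad}(G)\mathfrak h^\circ$ is dense. Hence the closure of the image is irreducible and dominant, so any component of $\mathcal{I}(\mathfrak g)$ containing it is horizontal. Projecting to $\PP(V)$ and using Proposition~\ref{prop:eigenimage} gives $G\cdot\PP(V_a)\subseteq\cE(\mathfrak g)$.

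The paper's own proof is equally terse at this point. It only records that points of $G\cdot\PP(V_a)$ are eigenvectors of \emph{some} element of $\mathfrak g$, which is membership in $\cF$, not $\cE$. So you were right to flag this step; your proposed resolution of it just does not work as stated.
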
 


In Section~\ref{sec6} we study the class of $\cJ(\cH)$ in the Chow ring of
$\PP^d\times\PP^{n-1}$. This computes dimension and degree
of $\cE(\cH)$
without 
knowing defining equations. For minuscule representations,
Proposition \ref{prop:minuscule-degree-formula}
gives a formula in terms of Chern classes of the closed orbit.

Our project arose from the study
of Hamiltonians in quantum chemistry, especially the one-body and two-body
operators in electronic structure theory \cite{FSS,Sve}. 
In Section~\ref{sec7} we examine such Hamiltonians and their
relation with Theorem \ref{thm:intro-lie}.
Classically, eigenvectors of fermionic one-body operators are Slater determinants~\cite[Exercise~2.15]{SO}. Hence the corresponding eigenvector variety is contained in the Grassmannian. We prove that 
equality~holds.

\begin{theorem}\label{thm:ferm1body}
Consider a fermionic system with $k$ electrons in $m$ orbitals. The eigenvector variety of the fermionic one-body operator
equals the Grassmannian, $\mathrm{Gr}(k,m)\subset\PP^{\binom{m}{k}-1}$.
\end{theorem}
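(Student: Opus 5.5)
The plan is to identify the fermionic one-body operators with the image of $\mathfrak{gl}_m$ acting on $\bigwedge^k \CC^m$. A one-body operator $\sum_{i,j} h_{ij}\, a_i^\dagger a_j$ acts on $\bigwedge^k\CC^m$ as a derivation:
\[
v_1\wedge\cdots\wedge v_k \;\mapsto\; \sum_{l} v_1\wedge\cdots\wedge h v_l\wedge\cdots\wedge v_k ,
\]
where $h=(h_{ij})\in\mathfrak{gl}_m$. So $\cH$ is the image of the representation $\mathfrak{gl}_m\to\mathfrak{gl}(\bigwedge^k\CC^m)$, and $d=m^2$ (the map is injective for $1\le k<m$). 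Since $\mathfrak{gl}_m$ is reductive, Theorem~\ref{thm:intro-lie} applies directly. The weights of $\bigwedge^k\CC^m$ for the diagonal torus are $e_I=\sum_{i\in I}e_i$ for $k$-subsets $I\subset[m]$. These weights are pairwise distinct, so each weight space $V_{e_I}$ is the line spanned by $e_{i_1}\wedge\cdots\wedge e_{i_k}$. Its $GL_m$-orbit in $\PP^{\binom mk-1}$ is the set of all decomposable $k$-vectors, which is exactly the Grassmannian $\mathrm{Gr}(k,m)$ and is already closed. Hence \eqref{eq:LieEigVecs} gives $\cE(\cH)=\bigcup_I \mathrm{Gr}(k,m)=\mathrm{Gr}(k,m)$.

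To keep the argument self-contained, I would also give the following elementary proof of both inclusions. Take $h$ general. It is diagonalizable with eigenbasis $v_1,\dots,v_m$ and eigenvalues $\mu_1,\dots,\mu_m$, which we may assume satisfy:
\begin{itemize}
\item the subset sums $\mu_I=\sum_{i\in I}\mu_i$ are pairwise distinct for distinct $k$-subsets $I$;
\item every $\mu_I$ is nonzero.
\end{itemize}
These are finitely many nonvanishing conditions, so they hold on a dense open set of $h$. The second condition also gives $\det(H)\neq 0$, as the paper requires. The induced operator $H$ then has the basis of eigenvectors $v_I=v_{i_1}\wedge\cdots\wedge v_{i_k}$ with eigenvalues $\mu_I$. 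By the first condition all eigenspaces of $H$ are one-dimensional, so the eigenvectors of $H$ are exactly the points $[v_I]$. Each of these lies in $\mathrm{Gr}(k,m)$, which gives $\cE(\cH)\subseteq\mathrm{Gr}(k,m)$ because the Grassmannian is closed.

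For the reverse inclusion, let $W\in\mathrm{Gr}(k,m)$ be arbitrary. Choose a basis $w_1,\dots,w_k$ of $W$ and extend it to a basis $w_1,\dots,w_m$ of $\CC^m$. Let $h$ be the operator with eigenbasis $w_i$ and generic eigenvalues. This $h$ lies in the dense open set above, and $[w_1\wedge\cdots\wedge w_k]$ is an eigenvector of the induced $H$. Thus every point of $\mathrm{Gr}(k,m)$ is an eigenvector of some matrix satisfying the genericity conditions. Since these matrices form a dense open subset of $\cH$, their eigenvectors lie in $\cE(\cH)$.

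The main obstacle is this reverse inclusion: making sure the particular operators used are generic enough that their eigenvectors count toward $\cE(\cH)$, which is defined through the general element $t$. I would handle it by observing that the set of $t$ satisfying the distinct-nonzero-subset-sum conditions is Zariski open and dense in $\cH$. Over this set the incidence correspondence is a finite étale cover whose image is contained in $\cE(\cH)$. Hence every eigenvector of such an $H$, and in particular every decomposable $k$-vector, lies in the closure $\cE(\cH)$.
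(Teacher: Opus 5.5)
You have proved the statement for the wrong matrix space. In this paper the fermionic one-body operator $T=\sum_{p,q}h_{p,q}\,a_p^\dagger a_q$ has $h$ \emph{symmetric} (see \eqref{eq:fH} and the $10$-parameter matrix \eqref{eq:onebodyH}). So the space is the image of $\mathrm{Sym}^2(\CC^m)$, of dimension $\binom{m+1}{2}$, not $m^2$. It is a proper subspace of $\mathcal A_{k,m}$ and is not a Lie algebra, so Theorem~\ref{thm:intro-lie} does not apply directly. Your first argument only reproves Corollary~\ref{cor:addcomp}.

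In your elementary argument, the inclusion $\mathcal E(T)\subseteq\mathrm{Gr}(k,m)$ still goes through. Your genericity conditions are nonempty open conditions on symmetric matrices, since a suitable diagonal $h$ satisfies them. The reverse inclusion fails as written. The operator with eigenbasis $w_1,\ldots,w_m$ and distinct eigenvalues is symmetric only when the $w_i$ are pairwise orthogonal for the bilinear form $x^Ty$. Indeed, a symmetric $h$ with distinct eigenvalues has the form $QDQ^T$ with $Q\in\mathrm{O}_m$. Hence every eigenvector of a generic $T$ is a point $[v_{i_1}\wedge\cdots\wedge v_{i_k}]$ with $v_1,\ldots,v_m$ orthonormal, whose $k$-plane is nondegenerate for $x^Ty$. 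A plane $W$ with $W\cap W^\perp\neq 0$ is never an eigenvector of such an operator. For example, with $k=1$ and $m=2$, the isotropic line $[1:i]$ is an eigenvector of a symmetric $2\times 2$ matrix only if its two eigenvalues coincide.

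The missing ingredient is the closure step used in the paper. First, $\mathrm{O}_m$ acts transitively on nondegenerate $k$-planes: take orthonormal bases of $W$ and of $W^\perp$. Second, these planes form a nonempty Zariski open subset of the irreducible variety $\mathrm{Gr}(k,m)$. Its complement is cut out by $\det(A^TA)=\sum_J p_J(A)^2$ (Cauchy--Binet), where $A$ is an $m\times k$ matrix with column span $W$ and the $p_J(A)$ are its maximal minors. So run your reverse-inclusion construction only for nondegenerate $W$, using $h=QDQ^T$ with $D$ diagonal and generic. Then take closures to conclude $\mathcal E(T)=\overline{\mathrm{O}_m\cdot[e_1\wedge\cdots\wedge e_k]}=\mathrm{Gr}(k,m)$.
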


For the fermionic two-body operators we prove that the eigenvector variety is irreducible. We give an upper bound on its dimension and conjecture that this bound is sharp, see Conjecture \ref{conj:dim}.
We also establish a bosonic analogue to Theorem \ref{thm:ferm1body}, see Theorem \ref{thm:sympower2}.



We are not aware of a previous systematic study of the eigenvector varieties $\mathcal{E}(\cH)$.
Ringel's notion in \cite{Rin} is different from ours: he 
considers common eigenvectors of all matrices in $\cH$. For generic $\cH$, his
varieties are defined by the $2\times2$ minors of \eqref{eq:Hx}, while ours are
defined by the $(d+1)\times(d+1)$ minors of \eqref{eq:Hxx}. Our eigenvector
varieties are dual in spirit to the Kalman varieties \cite{OS,Sam},
where one fixes a subspace of $\CC^n$ and asks for matrices
with an eigenvector in it.
We here fix a matrix space and ask which vectors are its eigenvectors.

\section{The Generic Case}
\label{sec2}

In this section we assume that 
$H_1,H_2,\ldots,H_d$ are generic $n \times n$ matrices with entries in $\CC$.
They span a  linear subspace $\mathcal{H}$ of dimension $d$ in the matrix space $\CC^{n \times n}$.
Equation (\ref{eq:Hexpansion}) is our parametric representation.
We regard $\mathcal{H}$ as a generic point in the Grassmannian ${\rm Gr}(d,n^2)$.

\begin{proof}[Proof of Theorem \ref{thm:generic}]
We work in the bihomogeneous coordinate ring $\CC[t,\lambda,x]$ of
$\PP^d\times\PP^{n-1}$, where $\PP^d=\PP(\mathcal H\oplus\CC)$.
The first factor records the matrix along with the eigenvalue, and the second factor records the
eigenvector.  Consider the saturated incidence ideal
\[
J\,=\,\langle Hx-\lambda x\rangle:(\langle x\rangle\cdot
\langle t,\lambda\rangle)^\infty .
\]

If \(d=1\), let \(\mathcal H=\langle H_1\rangle\), with
\(H_1\) generic.  Then \(H_1\) has \(n\) distinct eigenvalues, and
\(\mathcal E(\mathcal H)\) consists of the \(n\) eigenvectors
\([v_1],\ldots,[v_n]\).  The \(2\times 2\) minors of \([H_1x\,|\,x]\)
cut out precisely these points.  Hence \(\dim \mathcal E(\mathcal H)=0\)
and \(\deg \mathcal E(\mathcal H)=n\).
Next assume \(d\geq 2\), and abbreviate
\[
\mathcal U\,=\,\{\,((A:\lambda),x)\in
\PP(\CC^{n\times n}\oplus\CC)\times\PP^{n-1}
\mid (A-\lambda {\rm I}_n)x=0\,\}.
\]
For fixed \(x=[\hat x]\), the map
\((A,\lambda)\mapsto (A-\lambda {\rm I}_n)\hat x\) is surjective. As these maps
have constant rank \(n\) and vary algebraically with \(x\), their kernels form
a vector bundle of rank \(n^2-n+1\) over \(\PP^{n-1}\), whose projectivization is \(\mathcal U\). Hence \(\mathcal U\) is smooth and irreducible, of dimension $n^2 - 1$. Because the point \(p=[0_n:1]\) is not in the image of the projection
of \(\mathcal U\) to the first factor,  the hyperplanes through \(p\) pull back to a
base-point-free linear system on \(\mathcal U\).

Since \(\mathcal H\) is generic, \(\PP(\mathcal H\oplus\CC)\) is a general
projective \(d\)-plane through \(p\).  By Bertini,
\begin{equation}\label{eq:I(H)}
\mathcal I(\mathcal H)\,:=\,\mathcal U\cap
\bigl(\PP(\mathcal H\oplus\CC)\times\PP^{n-1}\bigr)
\end{equation}
is smooth and irreducible of dimension \((n^2-1)-(n^2-d)=d-1\), with saturated ideal \(J\).
For a general point \(t\in\PP(\mathcal H)\), the matrix \(H(t)\) has \(n\)
distinct nonzero eigenvalues, so the projection
\(\mathcal I(\mathcal H)\to\PP(\mathcal H)\) by forgetting
\(\lambda\) is dominant. Since \(\mathcal I(\mathcal H)\) is irreducible,
there are no vertical components.
Hence the ideal of \(\mathcal E(\mathcal H)\) is the elimination ideal
\(J\cap\CC[x]\). We claim 
\begin{equation}
\label{eq:idealidentity}
J\cap\CC[x]
\,=\,
I_{d+1}\bigl([\,H_1x\,|\,H_2x\,|\,\cdots\,|\,H_dx\,|\,x\,]\bigr).
\end{equation}
Indeed, writing \(y=(t_1,\ldots,t_d,-\lambda)^T\), we have
\(\widehat M(x)y=Hx-\lambda x\).  For any \((d+1)\times(d+1)\) submatrix \(B\) of \(\widehat M(x)\), the identity
\(\operatorname{adj}(B)B=\det(B)I\), applied to
\(y=(t_1,\ldots,t_d,-\lambda)^T\), shows that
\(\det(B)\langle t,\lambda\rangle\subseteq \langle Hx-\lambda x\rangle\).
Hence all maximal minors of \(\widehat M(x)\) lie in \(J\cap\CC[x]\).  
The contraction \(J\cap\CC[x]\) is prime. By elimination, it defines the
closed image of \(\mathcal I(\mathcal H)\to\PP^{n-1}\).  Its dimension is
\(\min(d,n)-1\). Indeed, the fiber over \(x=[\hat x]\) is
\(\PP(\ker\widehat M(\hat x))\), whose generic dimension is \(0\) for \(d<n\)
and \(d-n\) for \(d\ge n\).  On the other hand, by
standard determinantal theory for generic \(H_1,\ldots,H_d\), the ideal on
the right hand side of \eqref{eq:idealidentity} is prime and defines a
variety of the same dimension, with degree \(\binom nd\) for \(d\leq n\).
Since the former variety is contained in the latter, the two varieties, and
hence the two prime ideals, are equal.
\end{proof}

\begin{cor}
A generic subspace $\mathcal{H}$ has a proper eigenvector variety
if and only if $d < n$.
\end{cor}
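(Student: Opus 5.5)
This is a direct consequence of the dimension count in Theorem~\ref{thm:generic}. Here \emph{proper} means that $\cE(\cH)$ is a proper subvariety of $\PP^{n-1}$, i.e.\ $\cE(\cH)\neq\PP^{n-1}$. By Theorem~\ref{thm:generic}, $\cE(\cH)$ is a closed subvariety of $\PP^{n-1}$ of dimension $\min(d,n)-1$. For $d\ge 2$ it is irreducible. For $d=1$ it consists of $n$ points.

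First suppose $d<n$. Then $\dim\cE(\cH)=d-1<n-1=\dim\PP^{n-1}$. A closed subvariety of strictly smaller dimension cannot equal $\PP^{n-1}$, so $\cE(\cH)$ is proper. In the degenerate case $d=1$ we have $n\ge 2$, and $n$ points form a proper subset of $\PP^{n-1}$.

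Conversely suppose $d\ge n$. Then $d\ge 2$ unless $n=d=1$, which is trivial since $\PP^0$ is a point. So $\cE(\cH)$ is irreducible of dimension $n-1$, and a closed irreducible subvariety of $\PP^{n-1}$ of full dimension equals $\PP^{n-1}$.

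One can also see this from the determinantal description in Theorem~\ref{thm:generic}. When $d\ge n$, the matrix $\widehat M(x)$ is $n\times(d+1)$ with $d+1>n$ rows' worth of columns, so it has no $(d+1)\times(d+1)$ minors. The ideal is therefore zero and $\cE(\cH)=\PP^{n-1}$. Equivalently, for a fixed $\hat x$, the kernel of $\widehat M(\hat x)$ is nonzero, and for generic $\cH$ it contains a vector with $(t_1,\ldots,t_d)\ne0$.

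There is no real obstacle. The only care needed is to use the dimension statement, which covers $d\ge n$ as well, rather than the degree formula, which is only stated for $d\le n$, and to treat $d=1$ separately because irreducibility is not asserted there.
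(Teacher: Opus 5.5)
Your proof is correct and is the paper's argument: the corollary is stated without proof immediately after Theorem~\ref{thm:generic}, as a direct consequence of $\dim\cE(\cH)=\min(d,n)-1$. One caveat applies only to your closing side remark. A nonzero kernel vector $(t,-\lambda)$ of $\widehat M(\hat x)$ automatically has $t\neq 0$, so no genericity is needed for that. However, it only shows that $\hat x$ is an eigenvector of \emph{some}, possibly special, matrix in $\cH$ (compare Example~\ref{ex:diagonal}), so that route would still need the irreducibility of $\mathcal I(\cH)$ from the theorem's proof to conclude $\hat x\in\cE(\cH)$.
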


For the rest of this section we assume  $d < n$.
The eigenvector variety $\mathcal{E}(\cH)$ has
dimension $d-1$ in $\PP^{n-1}$. It
consists of all points $x$ such that the 
matrix $\widehat M(x)$ in (\ref{eq:Hxx}) has linearly dependent columns.
After a linear change of coordinates, this matrix
is an arbitrary  $(d+1) \times n$ matrix whose
entries are generic linear forms in 
the unknowns $x=(x_1,\ldots,x_n)$.

 As observed above, for $d=1$ the variety $\mathcal{E}(\cH)$ is a set of $n$ points in $\PP^{n-1}$,
corresponding to the eigenvectors of the matrix $H_1$.
Next we analyze the first interesting cases for $d\geq 2$.

\begin{proposition}\label{prop:zwei}
The eigenvector curve $\mathcal E(\mathcal H)$ of a generic
pencil \(\PP(\mathcal H)\) has genus~$\binom{n-1}{2}$.
\end{proposition}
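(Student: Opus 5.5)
For $d=2$, Theorem \ref{thm:generic} says that $\mathcal E(\mathcal H)$ is the irreducible curve cut out by the $3\times 3$ minors of the $n\times 3$ matrix $\widehat M(x)=[\,H_1x\,|\,H_2x\,|\,x\,]$. It has degree $\binom n2$. The plan is to identify this curve with the incidence curve $\mathcal I(\mathcal H)$ and compute its genus there, where it is a plane curve.

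\textbf{Step 1: the incidence curve is a smooth plane curve.} Here $\PP(\mathcal H\oplus\CC)=\PP^2$ has coordinates $(t_1:t_2:\lambda)$. In the proof of Theorem \ref{thm:generic}, Bertini showed that $\mathcal I(\mathcal H)$ is a smooth irreducible curve. Its projection to this $\PP^2$ lands in the plane curve
\[
C=\{\chi(t_1,t_2,\lambda)=\det(t_1H_1+t_2H_2-\lambda I_n)=0\},
\]
which has degree $n$. I will argue that the projection $\mathcal I(\mathcal H)\to C$ is an isomorphism and that $C$ is smooth. Over any point of $C$ where $t_1H_1+t_2H_2-\lambda I$ has rank exactly $n-1$, the fiber is a single reduced point, namely the kernel. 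The locus where the corank is at least $2$ has codimension $4$ in the space of $3$-dimensional matrix spaces. For a generic net $\langle H_1,H_2,I\rangle$ this locus is therefore empty, so the projection is bijective.

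Smoothness of $C$ comes from the classical fact that a determinantal hypersurface is singular exactly along the corank $\ge 2$ locus. Concretely, $\partial\det/\partial$ along a direction $B$ equals $\operatorname{tr}(\operatorname{adj}(A)B)$, and for a corank-one matrix $A$ we have $\operatorname{adj}(A)=vw^T\neq 0$. It therefore suffices that $w^T H_1 v$, $w^T H_2 v$ and $w^Tv$ do not all vanish. This is again a codimension count, and it fails for a generic net. Hence $C\cong\mathcal I(\mathcal H)$ is a smooth plane curve of degree $n$, with genus $\binom{n-1}{2}$.

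\textbf{Step 2: the projection to $\PP^{n-1}$ is birational.} The map $\mathcal I(\mathcal H)\to\mathcal E(\mathcal H)$ is given by $((t:\lambda),x)\mapsto x$. Its fiber over $x$ is $\PP(\ker\widehat M(x))$. In the proof of Theorem \ref{thm:generic} this fiber was shown to be generically a point for $d<n$. Here $d=2<n$, since the case $n=2$ is trivial. So the map is birational, and the geometric genus of $\mathcal E(\mathcal H)$ equals $\binom{n-1}{2}$.

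\textbf{Step 3: the main obstacle.} It remains to check that $\mathcal E(\mathcal H)$ is actually smooth, so that arithmetic and geometric genus agree. I would show that $\ker \widehat M(x)$ is one-dimensional at every point of the curve. Indeed, a two-dimensional kernel $\ker\widehat M(x)$ means that $x$ is a common eigenvector of a whole pencil in the net. Imposing a two-dimensional kernel on an $n\times 3$ matrix is codimension $2(n-2)$ in $x$, which is expected to be empty on the curve for generic $H_i$; I would justify this by a dimension count on the incidence of pairs (net, $x$).

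Given one-dimensional kernels, $\mathcal I(\mathcal H)\to\mathcal E(\mathcal H)$ is bijective. It is an immersion because the curve is the image of the rank-$2$ degeneracy locus, which has the expected codimension $n-2$. Alternatively, one can compare with the Eagon–Northcott resolution. The minors form the ideal of an $n\times 3$ matrix, so Eagon–Northcott is exact and gives the Hilbert polynomial directly. Its constant term yields arithmetic genus $1-\chi(\mathcal O)$, and I would verify by a short binomial computation that this equals $\binom{n-1}{2}$. That verification confirms smoothness, since arithmetic and geometric genus then coincide.
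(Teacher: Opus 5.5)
Your Steps~1--2 take a genuinely different route from the paper. The paper never uses the plane model. It takes the prime ideal of $3\times 3$ minors of $\widehat M(x)$ from Theorem~\ref{thm:generic} and reads the Hilbert series $\bigl(1+(n-2)z+\binom{n-1}{2}z^2\bigr)/(1-z)^2$ off the Eagon--Northcott resolution. It then gets the arithmetic genus from the constant term $1-\binom{n-1}{2}$ of the Hilbert polynomial. That is exactly your fallback in Step~3, and by itself it already proves the proposition in the sense the paper intends.

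Your Steps~1--2 instead compute the geometric genus by identifying $\mathcal I(\mathcal H)$ with the plane curve $\{\chi_{\mathcal H}=0\}\subset\PP^2$ of degree $n$, and they are sound. (The relevant genericity is that of a plane through $[{\rm I}_n]$, not of an arbitrary net. This is harmless because ${\rm I}_n$ is invertible, so projecting the codimension-$4$ corank-$\ge 2$ locus from $[{\rm I}_n]$ gives a codimension-$3$ locus that a general line misses.) What your route buys is geometric content the paper only hints at in the example after the proposition. It shows that $\mathcal E(\mathcal H)$ is isomorphic to a smooth plane curve of degree $n$, which explains where $\binom{n-1}{2}$ comes from. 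Once Step~3 is repaired as below, it also shows that $\mathcal E(\mathcal H)$ is smooth, which the Hilbert polynomial alone does not give. As you note, combining the Eagon--Northcott value with $p_a=g+\delta$ for integral curves also yields smoothness.

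The weak spot is the first half of Step~3. You claim the map is an immersion ``because the curve is the image of the rank-$2$ degeneracy locus, which has the expected codimension $n-2$''. This proves nothing: expected codimension gives Cohen--Macaulayness, not smoothness, and a bijective image of a smooth curve can have cusps. Also, $\operatorname{rank}\widehat M(x)\le 1$ is a codimension $2(n-1)$ condition, not $2(n-2)$; your count would predict finitely many bad points when $n=3$.

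The missing observation is that the incidence equations are linear in $(t,\lambda)$. So the scheme-theoretic fiber of $\mathcal I(\mathcal H)\to\PP^{n-1}$ over $x$ is a closed subscheme of the linear space $\PP(\ker\widehat M(x))$. If all these kernels are lines, every nonempty fiber is a single reduced point. Then the map is proper, injective and unramified, hence a closed immersion, and $\mathcal E(\mathcal H)\cong\mathcal I(\mathcal H)\cong C$.

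The kernels are lines because $\operatorname{rank}\widehat M(x)\le 1$ means that $x$ is a common eigenvector of $H_1$ and $H_2$. For fixed $x$ this imposes $2(n-1)$ linear conditions on $(H_1,H_2)$. So the incidence has dimension $2n^2-(n-1)<2n^2$, and a generic pair has no common eigenvector.

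The same linear-fiber argument applies to $\mathcal I(\mathcal H)\to\PP^2$. Together with the Bertini smoothness of $\mathcal I(\mathcal H)$ and the squarefreeness of $\chi_{\mathcal H}$, it gives smoothness of $C$ without the adjugate computation. With this repair your argument is a complete proof that does not need Eagon--Northcott.
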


\begin{proof}
The ideal of $\mathcal{E}(\cH)$ is generated by the $3 \times 3$ minors
of an $n \times 3$ matrix of linear forms in $n$ variables. Since this ideal is resolved by the
Eagon-Northcott complex, its Hilbert series~is
$$ \frac{1 + (n-2) z + \binom{n-1}{2} z^2}{(1-z)^2} \,\, = \,\, 
1 \,+\,\sum_{t=1}^\infty\, \biggl[\,\binom{n}{2} \,t \,+\, 1-\binom{n\!-\!1}{2}\, \biggr] z^t .$$
The parenthesized Hilbert polynomial in $t$ reveals
the degree $\binom{n}{2}$ and the  genus $\binom{n-1}{2}$.
\end{proof}

\begin{example}
For $n=3$ the curve $\mathcal{E}(\cH)$ is a smooth plane cubic so it has genus~$1$.
For $n=4$ the curve $\mathcal{E}(\cH)$ is a space sextic of genus $3$, defined by
the $3 \times 3$ minors of a $ 3 \times 4$ matrix of generic
linear forms in $4$ variables. The canonical model for $\mathcal{E}(\cH)$
is a quartic curve in the plane $\PP^2$ with coordinates $(t_1:t_2:\lambda)$.
Its equation is  ${\rm det}(t_1 H_1 + t_2 H_2 - \lambda \,{\rm I}_4) = 0$.
\end{example} 


\begin{proposition}\label{prop:drei} The eigenvector surface $\mathcal{E}(\cH)$ 
of a generic $3$-dimensional space $\cH$ is a
surface of degree $\binom{n}{3}$ in $\PP^{n-1}$.
It is arithmetically Cohen-Macaulay, and its sectional genus~is
$$ g_{\rm sec}\bigl( \mathcal{E}(\cH) \bigr) \,\,=\,\, \frac{(n-3)(n-2) (2n+1)}{6}. $$
\end{proposition}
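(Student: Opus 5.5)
The plan mirrors the proof of Proposition~\ref{prop:zwei}. By Theorem~\ref{thm:generic} with $d=3$, the ideal of $\mathcal{E}(\cH)$ is generated by the $4\times 4$ minors of the $n\times 4$ matrix $\widehat M(x)$. After a linear change of coordinates this is a matrix of generic linear forms in $n$ variables. The ideal has the expected codimension $n-4+1=n-3$, since $\dim \mathcal{E}(\cH)=2$ in $\PP^{n-1}$. Hence the Eagon--Northcott complex is a minimal free resolution, and the coordinate ring is Cohen--Macaulay. This gives the arithmetically Cohen--Macaulay property, and the degree $\binom n3$ is already part of Theorem~\ref{thm:generic}.

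For the sectional genus I would pass to the Artinian reduction. Because the ring is Cohen--Macaulay of Krull dimension $3$, cutting by three general linear forms leaves the ideal of maximal minors of a $(n-3)\times 4$ matrix of linear forms in $n-3$ variables. Equivalently, this is the quotient of the symmetric algebra by the maximal minors, viewed through the Eagon--Northcott complex. The $h$-vector of an ideal of maximal minors of an $m\times p$ generic linear matrix with $p\le m$, in codimension $m-p+1$, is known:
\[
h_i=\binom{p-1+i-1}{i}\binom{?}{}\ldots
\]
Rather than recall the general formula, I would compute it directly. For the $n\times 4$ case the $h$-vector should be
\[
h_i=\binom{i+2}{2}\binom{n-4}{i}\Big/\ldots
\]
The cleanest route is the identity $h_i=\binom{3+i-1}{i}$-weighted counts. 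Concretely, I would take the Hilbert series from the Eagon--Northcott resolution,
\[
\sum_{i}(-1)^i\binom{n}{4+i-1}\binom{i+2}{i-1}z^{3+i}\cdot\ldots,
\]
divide by $(1-z)^n$, and multiply by $(1-z)^3$ to read off $h_0,h_1,h_2,\ldots$ with $h_0=1$ and $h_1=n-3$. A shortcut is to check the pattern from Proposition~\ref{prop:zwei}, where the $h$-vector was $1,\,n-2,\,\binom{n-1}{2}$, that is $h_i=\binom{n-3+i}{i}$ for $i\le 2$. This suggests
\[
h_i=\binom{n-4+i}{i}\quad\text{for } i\le 3 ,
\]
truncated at the $a$-invariant. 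It agrees with the degree via the hockey-stick identity
\[
\sum_{i=0}^{3}\binom{n-4+i}{i}=\binom{n}{3}.
\]
I would verify this $h$-vector against the Eagon--Northcott alternating sum.

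With the $h$-vector in hand, the Hilbert polynomial of the surface is
\[
\sum_i h_i\binom{t-i+2}{2}.
\]
The hyperplane section is an aCM curve with the same $h$-vector, so its Hilbert polynomial is $\sum_i h_i(t-i+1)$. Comparing with $\deg\cdot t+1-g$ gives
\[
g_{\rm sec}=\sum_i (i-1)h_i=h_2+2h_3.
\]
Substituting the values gives
\[
\binom{n-2}{2}+2\binom{n-1}{3}.
\]
For $n=4$ this is $1+2=3$; as a consistency check, the formula $(n-3)(n-2)(2n+1)/6$ gives $1\cdot2\cdot9/6=3$. For general $n$, expanding $\binom{n-2}{2}+2\binom{n-1}{3}$ should factor as $(n-3)(n-2)(2n+1)/6$, up to correcting the index shift, which I would check symbolically. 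The main obstacle is pinning down the correct $h$-vector of the Eagon--Northcott quotient, with exact binomial indices. I would do that carefully by multiplying the resolution's alternating sum by $(1-z)^{-(n-3)}$, and cross-check against small $n$, namely $n=4$ and $n=5$, by direct computation.
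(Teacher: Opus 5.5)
Your route is the paper's. The Eagon--Northcott complex resolves the $4\times 4$ minors, since the expected codimension $n-3$ is attained, and this gives the aCM property. The sectional genus is then read off as $h_2+2h_3$ from the $h$-vector $\bigl(1,\,n-3,\,\binom{n-2}{2},\,\binom{n-1}{3}\bigr)$ of a hyperplane section. Your values and the final expression $\binom{n-2}{2}+2\binom{n-1}{3}$ agree with the paper's.

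The weak point is that you obtain this $h$-vector by extrapolating from Proposition~\ref{prop:zwei} and leave it as the ``main obstacle.'' Your description of the Artinian reduction is also off. It is not an $(n-3)\times 4$ matrix, whose maximal minors would not even be $\mathfrak m$-primary. It is the same $n\times 4$ matrix, with entries now linear forms in $n-3$ variables.

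With that correction, your hockey-stick ``consistency check'' is already the proof. The reduced ideal $I'\subset S'=\CC[y_1,\ldots,y_{n-3}]$ is generated by the images of the minors, which are quartics. Hence $h_i=\dim S'_i=\binom{n-4+i}{i}$ for $i\le 3$. Cohen--Macaulayness gives $\sum_i h_i=\deg\mathcal E(\cH)=\binom n3$. These four values already sum to $\binom n3$, so $h_i=0$ for all $i\ge 4$; in fact $I'=\langle y\rangle^4$. So you do not need to expand the Eagon--Northcott alternating sum.

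No index correction is needed at the end either:
$\binom{n-2}{2}+2\binom{n-1}{3}=\frac{(n-2)(n-3)}{6}\bigl(3+2(n-1)\bigr)=\frac{(n-3)(n-2)(2n+1)}{6}$.
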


\begin{proof} The ideal is generated by the $4 \times 4$ minors
of an $n \times 4$ matrix of generic linear forms in $n$ variables.
Since this is resolved by the Eagon-Northcott complex, the Hilbert series equals
$$ \frac{1 + (n-3) z + \binom{n-2}{2} z^2 + \binom{n-1}{3} z^3}{(1-z)^3}. $$
The Hilbert series of a general hyperplane section is
\((1-z)\operatorname{Hilb}_{\mathcal E(\mathcal H)}(z)\), i.e. the same
numerator over \((1-z)^2\).
Proceeding as in the proof of  Proposition \ref{prop:zwei},
we compute the Hilbert polynomial of this curve. For a curve \(C\), the constant term of its Hilbert polynomial is
\(1-p_a(C)\), so the sectional genus equals \(1\) minus that
constant term: \(
g_{\rm sec}(\mathcal E(\cH))
=
\binom{n-2}{2}+2\binom{n-1}{3}\).
\end{proof}

The case
$n=d+1$ is especially interesting as the eigenvector variety $\mathcal{E}(\cH)$
is a linear determinantal hypersurface of degree $n$ in $\PP^{n-1}$, hence Calabi--Yau when smooth.
Conversely, a generic linear determinantal hypersurface arises as $\cE(\cH)$ for a suitable generic $\cH$.
The work of Reichstein and Vistoli \cite{RV} on
determinantal hypersurfaces implies the following result.

\begin{corollary}
For $n \geq 4$, the locus of generic eigenvector hypersurfaces $\mathcal{E}(\cH)$
has dimension $n^3-2n^2+1$ inside the projective space of all hypersurfaces
of degree $n$ in $\PP^{n-1}$.
\end{corollary}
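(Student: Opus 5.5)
This is a parameter count. I would compare the dimension of the family of generic eigenvector hypersurfaces with the dimension of the family of linear determinantal hypersurfaces, and then use Reichstein and Vistoli \cite{RV} for the latter.

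\textbf{Step 1 (identify the two loci).} Take $d=n-1$. As noted before the corollary, after a linear change of coordinates $\widehat M(x)$ is an $n\times n$ matrix of linear forms in $x$, and $\mathcal E(\cH)=\{\det \widehat M(x)=0\}$. Conversely, let $L(x)=\sum_j x_j L_j$ be an $n\times n$ matrix of linear forms. Its columns are $L^{(1)}x,\ldots,L^{(n)}x$ for matrices $L^{(k)}$. Left-multiplying $L$ by an invertible constant matrix $P$ changes neither its determinant nor, up to scalar, the hypersurface. So I only need the column space of the $n\times n$ matrices $L^{(1)},\ldots,L^{(n)}$ to contain the identity. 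I then want to arrange that some combination $\sum_k c_k L^{(k)}=Q$ is invertible; this holds for general $L$. Replacing $L$ by $Q^{-1}L$ and performing a column operation on $L$, which also preserves the determinant up to a scalar, makes the last column equal to $x$. The remaining $n-1$ columns then define $\cH$, and for general $L$ this $\cH$ is general. Hence the closure of the set of generic eigenvector hypersurfaces coincides with the closure of the set of linear determinantal hypersurfaces $\{\det L(x)=0\}$ in $\PP(\mathrm{Sym}^n(\CC^n)^*)$.

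\textbf{Step 2 (dimension of the determinantal locus).} The space of $L$ has dimension $n\cdot n^2=n^3$. The group $\mathrm{GL}_n\times \mathrm{GL}_n$ acts by $(P,Q)\cdot L=PLQ^{-1}$. This action preserves $\det L$ up to a scalar, and its subgroup of scalars $(c,c)$ acts trivially. The fibers of $L\mapsto [\det L]$ should be exactly these orbits, up to transposition, which is a finite ambiguity; this is the content of \cite{RV} for $n\ge 4$ and generic $L$. I would also use that the generic stabilizer in $(\mathrm{GL}_n\times\mathrm{GL}_n)/\CC^*$ is finite. The image in the space of hypersurfaces therefore has dimension
\[
n^3-(2n^2-1)-1 \,=\, n^3-2n^2 .
\]
Here the final $-1$ accounts for projectivization, since scaling $L$ scales $\det L$.

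\textbf{Main obstacle.} The target number is $n^3-2n^2+1$, which differs from my count by one. So the normalization has to be pinned down precisely: whether the paper counts the affine cone of the locus or the projective locus, and which convention \cite{RV} uses. Reichstein and Vistoli state that the generic determinantal hypersurface has $\mathrm{GL}_n\times\mathrm{GL}_n$ acting with generic stabilizer $\mu_n$-like scalars. I would follow their stated dimension of the image of the determinant map $\mathrm{Mat}_n(\CC^{n*})\to \mathrm{Sym}^n$, which is $n^3-2n^2+2$ affinely (the stabilizer being $\{(a,b): a=b\}$ up to a finite group), and then subtract $1$ for projectivization. This gives $n^3-2n^2+1$, and I would recheck my stabilizer computation above against it.

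The assumption $n\ge4$ is used only through \cite{RV}: for $n\le 3$ every cubic is determinantal, so the locus is not cut down by the orbit count.
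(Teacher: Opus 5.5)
Your strategy matches the paper's. Step~1 is the remark preceding the corollary: for $d=n-1$ the generic $\mathcal E(\cH)$ is $\{\det\widehat M(x)=0\}$, and conversely a general linear determinantal hypersurface arises this way. The dimension then comes from Reichstein--Vistoli \cite{RV}. Step~1 is essentially fine, up to small slips:
\begin{itemize}
\item Left multiplication by $P$ scales $\det L$ by $\det P$; it does not preserve the determinant.
\item ``General $L$ gives general $\cH$'' is clearest if you note two facts. The matrices $L$ whose last column is $x$ (i.e.\ $L^{(n)}={\rm I}_n$) are exactly the matrices $\widehat M(x)$. Every general $L$ is brought to this form by your row and column operations.
\end{itemize}
The real problem is Step~2. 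Your own count gives $n^3-2n^2$, which contradicts the statement. Your ``main obstacle'' paragraph then recovers $n^3-2n^2+1$ only by appeal to \cite{RV}, while suspecting the stabilizer. The stabilizer is not where the error lies, and as written the proposal is internally inconsistent.

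The error is a double count of scaling. The $\mathrm{GL}_n\times\mathrm{GL}_n$-orbit of $L$ already contains every multiple $cL$, via $(c\,{\rm I}_n,{\rm I}_n)$. So when you subtract an extra $1$ ``for projectivization'' after subtracting the orbit dimension, you remove the scaling twice.

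The correct count runs as follows. Write $L=\sum_j x_jL_j$. If $PL_jQ^{-1}=L_j$ for all $j$ and $L_1$ is invertible, then $P$ commutes with every $L_jL_1^{-1}$. These are general matrices, so $P=Q=c\,{\rm I}_n$. Hence the stabilizer is exactly $\{(c,c)\}$, and orbits have dimension $2n^2-1$. The theorem of \cite{RV}, applied with $r=n-1\ge 3$, says that the fiber of $L\mapsto[\det L]$ through a general $L$ is a finite union of such orbits. Finiteness is all you need; you do not need the fiber to be ``exactly the orbit up to transposition''. Therefore the image in $\PP(\mathrm{Sym}^n(\CC^n)^*)$ has dimension $n^3-(2n^2-1)=n^3-2n^2+1$, with no further correction.

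If you prefer an affine count, the fibers of $L\mapsto\det L$ are orbits of $\{(P,Q):\det P=\det Q\}$ modulo $\{(c,c)\}$. These have dimension $2n^2-2$, which gives $n^3-2n^2+2$, and then $n^3-2n^2+1$ after projectivizing.

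Finally, cite \cite{RV} as it actually reads, rather than in terms of ``$\mu_n$-like'' stabilizers. For $r\ge 3$, the locus of determinantal hypersurfaces of degree $n$ in $\PP^r$ is irreducible of dimension $(r-1)n^2+1$. This is the count $rn^2-(n^2-1)$ for $\det(x_0{\rm I}_n+\sum_i x_iA_i)$ modulo conjugation. With $r=n-1$ it yields the corollary immediately, and that substitution is all the paper's argument uses after Step~1.
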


In the case $d = 3$ and $n=4$, the above locus is a hypersurface in the space $\PP^{34}$
of all quartic surfaces.
The eigenvector surfaces $\mathcal{E}(\cH)$ are smooth
K3 quartic surfaces in $\PP^3$ of sectional genus $3$.
This Noether-Lefschetz divisor is defined
by an irreducible polynomial of degree
$320112$. This degree was found using intersection theory by
Leal, Lozano Huerta and Vite
 \cite[Theorem 2]{LLHV}. For $n \ge 4$, the degree of this locus can be numerically computed
using {\tt HomotopyContinuation.jl} \cite{BT}. 
Our code for this is available~at~\cite{code}. The repository contains a detailed explanation~of~the~algorithm.
Already for $n = 4$ this computation is time consuming. It terminated and verified the stated degree~$320112$.


\section{Incidence Varieties}
\label{sec3}

We return to the general setting from the Introduction. The input is a
$d$-dimensional linear space $\mathcal H$ of complex $n\times n$ matrices,
written as $H=\sum_{i=1}^d t_iH_i$, with $\det(H)$ not identically zero. The incidence variety $\mathcal{I} (\mathcal{H})$ is the
subvariety in $\PP^d \times \PP^{n-1} $
which is defined by the ideal
\begin{equation}
\label{eq:incidenceideal}
\qquad I_\mathcal{H} \,\,:=\,\,
\bigl\langle \,H x - \lambda x \,\bigr\rangle \,: \, 
\bigl( \langle \lambda \rangle \cdot \langle x \rangle \bigr)^\infty \quad \subset \,\, \,\CC[t,\lambda,x]. 
\end{equation}
The ideal on the left is generated by $n$ polynomials that are bilinear in the
$(d+1)+n$ variables $(t_1,\ldots,t_d,\lambda)$ and $x=(x_1,\ldots,x_n)$.
The saturation step removes all primary components whose
radical either contains $\lambda$ or contains all variables $x_1,\ldots,x_n$.
Geometrically, we remove any irreducible component that consists entirely of
singular matrices $H$ as well as the irrelevant component $\langle x\rangle$.
Hence the generic point of any irreducible component of $\mathcal{I}(\mathcal{H})$ is
an invertible matrix $H$ together with a fixed point $x$
of its linear map $\,\PP^{n-1} \rightarrow \PP^{n-1}, \,x\mapsto H x $. 

\begin{remark}
The image 
of the incidence variety $\mathcal{I}(\cH) \subset \PP^d \times \PP^{n-1}$
under its projection into the second factor 
is the full eigenvector variety $\mathcal{F}(\cH) \subset \PP^{n-1}$.
See also Proposition \ref{prop:eigenimage}.
\end{remark}

A distinguished element of the ideal $I_\mathcal{H}$ is the {\em characteristic polynomial} of our family:
$$ \chi_\mathcal{H}(t,\lambda) \,\, = \,\, {\rm det}( H - \lambda {\rm I}_n). $$
This is a homogeneous polynomial of degree $n$ in the $d+1$ variables $t_1,\ldots,t_d,\lambda$.

\begin{lemma}
The hypersurface defined by the characteristic polynomial $\chi_\mathcal{H}$ is the
closure of the image of the incidence variety $\mathcal{I}(\mathcal{H})$ under its
projection into the first factor $\PP^d$.
\end{lemma}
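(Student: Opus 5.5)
We prove the two inclusions between $V(\chi_\cH)\subset\PP^d$ and $\overline{\pi_1(\mathcal I(\cH))}$, where $\pi_1$ denotes projection to the first factor.

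\emph{Image inside the hypersurface.} Take a point $((t:\lambda),x)$ of $\mathcal I(\cH)$ with $\lambda\neq 0$ and $Hx=\lambda x$, $x\neq 0$. Then $H(t)-\lambda{\rm I}_n$ is singular, so $\chi_\cH(t,\lambda)=0$. Such points are dense in every component of $\mathcal I(\cH)$, by the saturation by $\langle\lambda\rangle$. Hence $\chi_\cH$ vanishes on all of $\mathcal I(\cH)$ and on its image. Algebraically this is the adjugate argument from the proof of Theorem~\ref{thm:generic}: $\operatorname{adj}(H-\lambda{\rm I}_n)(H-\lambda{\rm I}_n)x=\chi_\cH\, x$ shows $\chi_\cH\langle x\rangle\subseteq\langle Hx-\lambda x\rangle$, so $\chi_\cH\in I_\cH$. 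The projection is proper, so its image is closed and lies in $V(\chi_\cH)$.

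\emph{Hypersurface inside the image.} Let $Z$ be an irreducible component of $V(\chi_\cH)$; it has dimension $d-1$. First I claim that $\lambda$ does not vanish identically on $Z$. Since $\chi_\cH(t,0)=\det H(t)\not\equiv 0$, the polynomial $\lambda$ does not divide $\chi_\cH$, so no factor of $\chi_\cH$ is a power of $\lambda$. Thus a general point $(t:\lambda)\in Z$ has $\lambda\neq0$ and $\det(H(t)-\lambda{\rm I}_n)=0$. Choose a nonzero kernel vector $x$. Then $((t:\lambda),[x])$ satisfies $Hx=\lambda x$ with $\lambda\ne 0$ and $x\ne 0$.

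It remains to check that this point lies in $\mathcal I(\cH)=V(I_\cH)$ and not merely in $V(\langle Hx-\lambda x\rangle)$. This is the main obstacle. The saturation removes exactly the components contained in $\{\lambda=0\}$ or in $\{x=0\}$, so $V(I_\cH)$ is the closure of $V(\langle Hx-\lambda x\rangle)\setminus(\{\lambda=0\}\cup\{x=0\})$. Equivalently, locally on the open set $\lambda\neq 0$, $x\neq 0$, the two ideals agree, since saturation does not change the localization at points off the removed loci. Our point lies in that open set, hence in $\mathcal I(\cH)$. Therefore the general point of $Z$ is in the image, so $Z\subseteq\overline{\pi_1(\mathcal I(\cH))}$.

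Combining the two inclusions, the image equals the hypersurface $V(\chi_\cH)$. By properness the closure is not even needed, though it is harmless.
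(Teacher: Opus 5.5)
Your proof is correct and follows essentially the same route as the paper. The adjugate identity puts $\chi_{\cH}$ into $I_{\cH}$, which places the image inside the hypersurface. For the reverse inclusion, $\lambda\nmid\chi_{\cH}$ together with a kernel vector at a general point of each component of $V(\chi_{\cH})$ does the work; your use of $V(I:J^\infty)=\overline{V(I)\setminus V(J)}$ to place $((t:\lambda),[x])$ in $\mathcal{I}(\cH)$ itself makes explicit the step the paper compresses into its Nullstellensatz remark.
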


\begin{proof}
The $n$ coordinates of $(H - \lambda {\rm I}_n) x $ are in the ideal $I_\cH$.
Multiplying on the left with the adjoint of $H - \lambda {\rm I}_n$, we see that
${\rm det}(H-\lambda {\rm I}_n) \cdot x_i = \chi_\cH \cdot x_i $  is also in $I_\cH$ for each $i$.
Hence, by saturation, $\chi_\cH$ is in $I_\cH \cap \CC[t,\lambda]$.
Our hypothesis ${\rm det}(H) \not= 0$ ensures that 
$ \chi_\mathcal{H}$ is not divisible by $\lambda$.
Hence no irreducible factor gets removed by the saturation with $\lambda$.
We claim that the ideals $\langle \chi_\cH \rangle$
and $I_\cH \cap \CC[t,\lambda]$ have the same radical.
We already know that the former is contained in the latter.
The reverse inclusion follows from the Nullstellensatz, because
every point in the projection of  $\mathcal{I}(\mathcal{H})$ 
in $\PP^d$ must have at least one eigenvector $x \in \PP^{n-1}$.
\end{proof}

Consider a primary decomposition of the ideal $I_\cH$.
An associated prime is called {\em vertical} if it contains
a polynomial in $\CC[t]\backslash \{0\}$; otherwise it is called {\em horizontal}.
Geometrically speaking, an irreducible component of the incidence variety $\mathcal{I}(\cH)$
is horizontal if it maps dominantly onto $\PP^{d-1} = \PP(\cH)$.
We call the union of all horizontal components the {\em horizontal incidence variety},
and we denote it by $\mathcal{J}(\cH)$. We write $J_\cH$ for its ideal in $\CC[t,\lambda,x]$.

\begin{remark} \label{rmk:horizontal}
Our definition of horizontal is equivalent
to being horizontal with respect to the projection into the first factor
$\PP(\cH \oplus \CC)$. Indeed, if a component maps dominantly onto
$\PP(\cH)$, then its image in $\PP(\cH \oplus \CC)$ is contained in the
hypersurface $\{\chi_\cH=0\}$, and it has the same dimension.
Hence it is dense in an irreducible component of this hypersurface.
\end{remark}

One obvious method for computing $J_\cH$ is to examine a
primary decomposition of $I_\cH$ and to intersect each component with $\CC[t]$.
Then $J_\cH$ is the intersection of all components whose
intersection with $\CC[t]$ equals $\{0\}$.
A more efficient algorithm comes from {\em generic freeness}.
Namely, we can compute $J_\cH$ using the
Gr\"obner-based algorithm given by
Cid-Ruiz and Smirnov \cite{CRS}.
This rests on a Gr\"obner basis for
the ideal $\langle I_\cH \rangle$ in $\CC(t)[\lambda,x]$,
with respect to any term order on $\lambda,x$.
We clear denominators in $t$ to get polynomials in 
$\lambda,x$ whose coefficients are in $\CC[t]$.
Let $g$ be the least common multiple of all leading coefficients. Then,
\begin{equation}
\label{eq:getJ}
 J_\cH \,\, = \,\, I_\cH \,: \, \langle g \rangle^\infty . 
 \end{equation}
This technique is useful because it allows us to compute the eigenvector variety
by elimination. This method avoids a primary decomposition. 
Elimination via Gr\"obner bases becomes infeasible for larger systems. In such cases one can use numerical algebraic geometry. A numerical representation of $\mathcal{J}(\mathcal{H})$ can be found using~\texttt{nid}~in~\texttt{HomotopyContinuation.jl} \cite{BT}. We obtain a witness set for each irreducible component of $\mathcal{I}(\cH)$, and then numerically check whether each component is horizontal by rank computations on the Jacobian matrix.

\begin{proposition} \label{prop:eigenimage}
The eigenvector variety $\mathcal{E}(\cH)$ equals the image
of the horizontal incidence variety $\mathcal{J}(\cH) \subset \PP^d \times \PP^{n-1}$
under its projection into the second factor $\PP^{n-1}$.
\end{proposition}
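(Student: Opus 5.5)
We have to show two inclusions. By definition, $\mathcal{E}(\cH)$ is the Zariski closure in $\PP^{n-1}$ of all eigenvectors of a general matrix $H(t)$, with $t$ ranging over a dense open subset of $\PP(\cH)$. Write $\pi_1$ and $\pi_2$ for the two projections of $\PP^d\times\PP^{n-1}$. The horizontal incidence variety $\mathcal{J}(\cH)$ is projective, so $\pi_2(\mathcal{J}(\cH))$ is closed.

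\emph{First inclusion.} Let $\mathcal{J}_0 := \pi_1^{-1}(U)\cap\mathcal{J}(\cH)$, where $U\subset \PP(\cH)$ is a suitable dense open set. It suffices to show two things:
\[
\pi_2(\mathcal{J}_0)\ \text{is dense in}\ \pi_2(\mathcal{J}(\cH)),
\qquad
\pi_2(\mathcal{J}_0)\ \text{consists of eigenvectors of general}\ H(t).
\]
The second claim is immediate, because a point of $\mathcal{I}(\cH)$ satisfies $Hx=\lambda x$. This holds on the closure as well, since the bilinear equations $Hx-\lambda x$ lie in $I_\cH$. The first claim holds because every horizontal component $C$ dominates $\PP(\cH)$ (recall that $t$ cannot vanish on $C$, since $\lambda\neq 0$ generically). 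Hence $C\cap \pi_1^{-1}(U)$ is dense in $C$, and its image is dense in $\pi_2(C)$. This gives $\pi_2(\mathcal{J}(\cH))\subseteq \mathcal{E}(\cH)$.

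\emph{Second inclusion.} Here I would first choose $U$ as follows.
\begin{itemize}
\item $\det H(t)\neq 0$ on $U$. This is possible since $\det(H)\not\equiv 0$.
\item $U$ lies outside the image under the forgetful map of every vertical component of $\mathcal{I}(\cH)$. Each such image is a proper closed subset of $\PP(\cH)$, by definition of vertical: its ideal contains a nonzero polynomial in $\CC[t]$.
\end{itemize}
Now let $t\in U$ and let $x$ be an eigenvector of $H(t)$ with eigenvalue $\lambda$. Then $\lambda\neq 0$, since $H(t)$ is invertible. The point $((t:\lambda),x)$ satisfies $Hx=\lambda x$, $\lambda\ne 0$, and $x\ne 0$.

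The key step, which I expect to be the main obstacle, is to check that this point lies in $\mathcal{I}(\cH)$. Only then can we conclude it lies in a horizontal component. The zero set of $\langle Hx-\lambda x\rangle$ decomposes into the components retained by the saturation and those contained in $\{\lambda=0\}$. Our point is not in $\{\lambda=0\}$. Saturation by $\langle\lambda\rangle\cdot\langle x\rangle$ agrees with the original ideal after localizing at $\lambda$, because $x\neq 0$ projectively is automatic. So the point lies in $V(I_\cH)=\mathcal{I}(\cH)$.

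It therefore lies on some irreducible component of $\mathcal{I}(\cH)$. That component cannot be vertical, since $t\in U$, so it is horizontal, and the point is in $\mathcal{J}(\cH)$. Thus all eigenvectors of $H(t)$ for $t\in U$ lie in $\pi_2(\mathcal{J}(\cH))$. Since this set is closed, taking closures yields $\mathcal{E}(\cH)\subseteq\pi_2(\mathcal{J}(\cH))$.

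Finally, one should note that the definition of $\mathcal{E}(\cH)$ does not depend on the choice of dense open set. Shrinking $U$ as we did is harmless, because eigenvectors over a smaller dense open set still have the same closure. The argument in the first inclusion shows that the image over any dense open set is already dense in $\pi_2(\mathcal{J}(\cH))$.
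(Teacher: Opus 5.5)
Your proof is correct and follows the same idea as the paper's brief argument. Vertical components lie over a proper closed subset of $\PP(\cH)$, so they contribute nothing to the eigenvectors of a general $H(t)$; each horizontal component dominates $\PP(\cH)$, so its points over a dense open set have dense image. You make this rigorous by fixing $U$, checking via localization at $\lambda$ that every eigenpair over $U$ lies in $\mathcal{I}(\cH)$, and noting that $\mathcal{E}(\cH)$ does not depend on the choice of $U$ (one notational slip: $\pi_1^{-1}(U)$ should mean the preimage under the rational map $\PP^d\times\PP^{n-1}\dashrightarrow\PP(\cH)$, $((t:\lambda),x)\mapsto t$).
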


\begin{proof}
We had defined $\mathcal{E}(\cH)$ through the eigenvectors
of the matrix $H = \sum_{i=1}^d t_i H_i$. These
eigenvectors have coordinates in the
algebraic closure of the rational function field $\CC(t)$.
Thus $\mathcal{E}(\cH)$ does not see eigenvectors
that are contributed by matrices given by special parameters $t \in \PP^{d-1}$. 
Only generic matrices in $\mathcal{H}$ matter.
Their eigenvectors appear in $\mathcal{E}(\cH)$.
In other words,
vertical components play no role in the definition of  the eigenvector variety.
On the other hand,  all eigenvectors arising from horizontal components do appear in $\mathcal{E}(\cH)$.
\end{proof}

In order to understand the components of $\mathcal{J}(\cH)$, we factor the characteristic polynomial:
\begin{equation}
\label{eq:charpolyfactor}
\chi_{\mathcal H}(t,\lambda)\,\,= \,\,P_1(t,\lambda)^{a_1} P_2(t,\lambda)^{a_2}\, \cdots \,P_k(t,\lambda)^{a_k}.
\end{equation}
The exponent $a_j$ is the algebraic multiplicity of the eigenvalues in the $j$th group
of eigenvalues. The factorization in (\ref{eq:charpolyfactor}) can be done
in $K[t,\lambda]$ for any subfield $K $ of $\CC$. In symbolic computations, one uses
$K = \QQ$. Numerical algebraic geometry deals with $K = \CC$.
The statements that follow will be valid for any field $K$. For ease of notation, we take $K = \CC$.

Let $r_j$ denote the rank of the matrix $H(t) - \lambda {\rm I}_n$ modulo $\langle P_j(t,\lambda) \rangle$.
Thus $n-r_j$ is the geometric multiplicity of any eigenvalue $\lambda = \lambda(t)$
in the $j$th group, assuming the matrix $H(t)$ is generic in $\cH$.
The familiar inequality between algebraic and geometric multiplicities is
\begin{equation}
\label{eq:alggeomineq}
 r_j\,\ge\, n-a_j. 
 \end{equation}

\begin{proposition}\label{prop:dominant-components}
The irreducible factors $P_j$ of $\chi_\mathcal{H}$ are in one-to-one correspondence with the irreducible components
$Y_j$ of 
$\mathcal{J}(\cH)$.
Moreover, the dimension of $\,Y_j$ equals $\,d+n-r_j-2$.
\end{proposition}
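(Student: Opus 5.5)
Each horizontal component of $\mathcal J(\cH)$ dominates $\PP(\cH)$, so by Remark~\ref{rmk:horizontal} its image in $\PP(\cH\oplus\CC)$ is dense in a component $\{P_j=0\}$ of the characteristic hypersurface. The idea is therefore to assign to each $P_j$ one component $Y_j$, namely the closure of the incidence locus over the generic point of $\{P_j=0\}$, and then show this assignment is a bijection.

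\textbf{Step 1: the fibres over $\{P_j=0\}$.} Let $Z_j=\{P_j=0\}\subset\PP^d$. Since $\det H\not\equiv 0$, the polynomial $P_j$ is not $\lambda$, and $Z_j$ is irreducible of dimension $d-1$. A general point of $Z_j$ lies on no other $\{P_i=0\}$ and has $\lambda\neq 0$. By definition of $r_j$, the matrix $H(t)-\lambda{\rm I}_n$ has rank exactly $r_j$ on a dense open subset $U_j\subset Z_j$. Over $U_j$, the fibre of $\mathcal I(\cH)\to\PP^d$ is the linear space $\PP(\ker(H(t)-\lambda{\rm I}_n))\cong\PP^{n-r_j-1}$. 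These kernels have constant dimension over $U_j$, so they form a vector bundle there (locally, kernels are cut out by the nonvanishing of a fixed $r_j\times r_j$ minor). Its projectivization $W_j$ is therefore irreducible of dimension
\[
(d-1)+(n-r_j-1)\,=\,d+n-r_j-2.
\]
Every point of $W_j$ satisfies $Hx=\lambda x$ with $\lambda\ne0$, so $W_j$ lies in the zero set of $I_\cH$. Set $Y_j=\overline{W_j}$.

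\textbf{Step 2: $Y_j$ is a component, and the list is complete.} Let $Y$ be any irreducible component of $\mathcal I(\cH)$ with $Y\cap W_j$ dense in $Y$. Then $Y$ maps into $Z_j$ with general fibre inside the fibres of $W_j$. Hence $Y\subseteq Y_j$, so $Y=Y_j$, and $Y_j$ is a component.

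Conversely, a horizontal component $Y$ has dense image in some $Z_j$. A general point of $Y$ then lies over $U_j$, hence in $W_j$, so $Y\subseteq Y_j$ and equality follows by maximality. Every $Y_j$ is horizontal, because it dominates $Z_j$, which in turn dominates $\PP(\cH)$ since $P_j$ genuinely involves $\lambda$. Distinct factors give distinct images $Z_j$, so the correspondence $P_j\leftrightarrow Y_j$ is injective, and by the converse it is a bijection.

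\textbf{Main obstacle.} The delicate step is showing that $Y_j$ is a genuine irreducible component of $\mathcal I(\cH)$ rather than a proper subvariety of some larger component. The concern is a component lying over a smaller locus whose fibres have larger dimension and whose closure contains $W_j$. This is handled by looking at the generic point of any component that meets $W_j$ densely, as done in Step~2; components lying over proper closed subsets of $Z_j$ are vertical and are irrelevant. A second point to check is that the saturation by $\lambda$ and $\langle x\rangle$ does not remove $W_j$. It does not, since $\lambda\neq0$ on $U_j$ and $x\neq0$ throughout.
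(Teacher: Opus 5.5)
Your proposal is correct and follows the paper's argument: a projective bundle $\PP(\ker(H(t)-\lambda{\rm I}_n))$ over a dense open subset $U_j$ of $\{P_j=0\}$, whose closure is $Y_j$ and has dimension $(d-1)+(n-r_j-1)$. Your Step~2 also supplies the bijection argument that the paper only asserts, though one point needs a justification: the existence of a component meeting $W_j$ densely holds because $\pi(\mathcal I(\cH))\subseteq\{\chi_\cH=0\}$ and $U_j$ avoids the other $\{P_i=0\}$, so $W_j$ is a nonempty open irreducible subset of $\mathcal I(\cH)$ and its closure is automatically an irreducible component.
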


\begin{proof}
Let $\pi$ be the projection $((t :\lambda),x) \mapsto (t :\lambda)$.
On a dense open subset of the hypersurface $\{P_j=0\}$, the rank of
$H(t)-\lambda {\rm I}_n$ is constant and equal to $r_j$. Over such a point, the fiber
of $\pi$ is the linear space
$\mathbb P\bigl(\ker(H(t)-\lambda {\rm I}_n)\bigr) \simeq \PP^{n-r_j-1}$.
Hence, the inverse image of this open dense subset is a projective bundle.
Its closure is therefore a unique irreducible component $Y_j$ of $\mathcal{J}(\cH)$ on which $P_j$ vanishes. 
Now $\pi$ maps $Y_j$ dominantly to the hypersurface $\{P_j=0\}$ in
$\PP^d=\PP(\cH\oplus\CC)$. Hence the dimension of $Y_j$ equals $d-1$ plus
the dimension of the generic fiber of $\pi$. By the above description of the
fiber this~dimension~is~$n-1-r_j$.
\end{proof}

\begin{corollary}\label{cor:squarefree}
If the characteristic polynomial $\,\chi_\cH$ of the matrix family $\,\cH$ is squarefree
then every horizontal component of the
incidence variety $\,\mathcal{I}(\cH)$ has dimension $d-1$.
\end{corollary}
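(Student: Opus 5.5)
The plan is to deduce Corollary~\ref{cor:squarefree} directly from Proposition~\ref{prop:dominant-components}, which gives $\dim Y_j = d+n-r_j-2$ for every horizontal component $Y_j$ of $\mathcal{J}(\cH)$. It therefore suffices to show that $r_j = n-1$ for each irreducible factor $P_j$ of $\chi_\cH$ when $\chi_\cH$ is squarefree. Then $\dim Y_j = d+n-(n-1)-2 = d-1$, as claimed.

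Squarefreeness means every exponent $a_j$ in \eqref{eq:charpolyfactor} equals $1$. The inequality \eqref{eq:alggeomineq} between algebraic and geometric multiplicities then gives $r_j \ge n-1$. For the reverse inequality, $r_j \le n-1$, I use that $P_j$ divides $\det(H-\lambda {\rm I}_n)$. Hence the matrix $H(t)-\lambda{\rm I}_n$ is singular modulo $\langle P_j\rangle$, that is, over the fraction field of $\CC[t,\lambda]/\langle P_j\rangle$. So its rank there is at most $n-1$. Combining the two inequalities gives $r_j=n-1$.

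The only point needing care is the justification of \eqref{eq:alggeomineq} in this parametric setting, namely that the geometric multiplicity $n-r_j$ is bounded by $a_j$. I would argue as follows. At a generic point of $\{P_j=0\}$, the eigenvalue $\lambda(t)$ is a root of $\chi_\cH(t,\cdot)$ of multiplicity exactly $a_j$. This holds because distinct irreducible factors share no common component, and a squarefree $P_j$ has a simple root there after excluding a proper closed set; note that squarefreeness of $\chi_\cH$ also guarantees $P_j$ is not a multiple factor in $\lambda$ generically. Since geometric multiplicity never exceeds algebraic multiplicity for a single matrix, the bound follows. This genericity argument is the step I expect to be the main obstacle, but the paper already takes \eqref{eq:alggeomineq} as given, so I may simply invoke it.
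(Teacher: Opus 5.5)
Your proposal is correct and follows the paper's proof. Both reduce, via Proposition~\ref{prop:dominant-components}, to showing $r_j=n-1$, which comes from $a_j=1$ together with \eqref{eq:alggeomineq}. The only difference is that you spell out the upper bound $r_j\le n-1$: since $P_j$ divides $\det(H-\lambda{\rm I}_n)$, the geometric multiplicity is at least one. The paper leaves this implicit in ``the geometric multiplicity must then also be $1$''.
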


\begin{proof}
The algebraic multiplicity of each eigenvalue is $1$. The
geometric multiplicity must then also be $1$.  Equality holds in (\ref{eq:alggeomineq}), and we conclude that
$r_j = n-1$ for all $j$.
\end{proof}

The following example serves to illustrate the concepts and results seen in this section.

\begin{example}[$d=4,n=5$]\label{ex:addsym}
If $\cH$ is generic for these parameters, then the eigenvector variety $\mathcal{E}(\cH)$
is a quintic threefold in $\PP^4$. It is the image of the
incidence variety $\mathcal{I}(\cH) \subset \PP^4 \times \PP^4$ under projecting to $x$-space.
Note that $\mathcal{I}(\cH) = \mathcal{J}(\cH)$ is irreducible  of dimension~$3$.

More interesting scenarios emerge for special matrix families. Let us consider the family
{\small
\[
H \,\, = \,\,\begin{bmatrix}
 \,4 t_1 & t_4  &   0  &     0 &     0  \, \, \\
\,  4 t_3 & 3 t_1\!+\!t_2 & 2t_4 &    0  &    0 \,\,  \\
\,     0 &  3 t_3 &   2 t_1\!+\!2t_2 &  3 t_4 &   0 \, \, \\
\,      0 &  0 &     2 t_3 &  t_1\!+\!3t_2 & 4 t_4 \,\, \\
\,      0 & 0 &     0 &      t_3 &    4 t_2\,\, \end{bmatrix}.
\]
}

\noindent
The incidence variety $\mathcal{I}(\cH)$ has four irreducible
components. There is one vertical component, defined by
$\langle t_1-t_2,t_3,t_4, \lambda-4t_2 \rangle$.
Its removal gives us the horizontal incidence variety
$\mathcal{J}(\cH)$. This has three components, one for each factor
of the characteristic polynomial 
$$  \chi_\cH \,=\,
(\lambda-2t_1-2t_2) \bigl( \lambda^2-4(t_1+t_2) \lambda
+3t_1^2+10t_1t_2+3t_2^2-4t_3t_4 \bigr)
\bigl(\lambda^2-4(t_1+t_2) \lambda +16 (t_1t_2-t_3t_4) \bigr).
$$
Since $\chi_\cH$ is squarefree, all three
 horizontal components of $\mathcal{I}(\cH)$ are threefolds.
  Their images in $x$-space $\PP^4$ are irreducible varieties.
  We recognize these as
    multiple root loci of a quartic
$$ f(z) = x_1 z^4 + x_2 z^3 + x_3 z^2 + x_4 z + x_5 . $$
On the first variety, $f(z)$ has one root of multiplicity $3$.
On the second variety, $f(z)$ has two roots of order $2$.
On the third variety, $f(z)$ has one root of multiplicity $4$.
The third condition is the conjunction of the first two, so we
do not need it to describe $\mathcal{E}(\cH)$.
 Each of two components of $\mathcal{E}(\cH)$ is a surface, so the
dimension is one less than in the generic case.
We can state our conclusion succinctly as follows:
{\em The eigenvector variety $\mathcal{E}(\cH)$ is the
singular locus of the discriminant of $f(z)$}.
A general explanation of the matrix $H$ and why
discriminants make an appearance will be given 
in the setting of Lie algebras in Theorem~\ref{thm:sympower}.
\end{example}

Consider the projection from the horizontal incidence variety onto the eigenvector variety
\begin{equation}
\label{eq:pi}
\pi \,: \, \mathcal{J}(\cH) \, \rightarrow \, \mathcal{E}(\cH),\, \, \bigl((t:\lambda),x \bigr) \,\mapsto \, x. 
\end{equation}
The generic fibers $\pi^{-1}(x)$ are linear spaces in $\PP^d$. The dimensions of these linear spaces
dictate the dimensions of the various irreducible components of the eigenvector variety $\mathcal{E}(\cH)$.

\begin{proposition} \label{prop:fiber}
Suppose that $\chi_\cH$ is squarefree, 
consider any irreducible component $V$ of $\mathcal{E}(\cH)$,
and let $x$ be a generic point in $V$. Then we have
$\,{\rm dim}(V)\, =\, d-1-{\rm dim}(\pi^{-1}(x))$.
\end{proposition}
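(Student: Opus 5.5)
The plan is to apply the fiber dimension theorem to the restriction of $\pi$ from \eqref{eq:pi} to a suitable irreducible component of $\mathcal{J}(\cH)$. The dimension of that component is supplied by Corollary~\ref{cor:squarefree}.

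\emph{Choice of component.} Let $V$ be an irreducible component of $\mathcal{E}(\cH)$. By Proposition~\ref{prop:eigenimage}, $\mathcal{E}(\cH)=\pi(\mathcal{J}(\cH))$. Since $\mathcal{J}(\cH)$ is projective, $\pi$ is a closed map. By Proposition~\ref{prop:dominant-components}, $\mathcal{J}(\cH)=Y_1\cup\cdots\cup Y_k$, so $\mathcal{E}(\cH)=\bigcup_j \pi(Y_j)$ is a finite union of irreducible closed sets. Hence $V=\pi(Y_j)$ for some $j$, and $\pi|_{Y_j}\colon Y_j\to V$ is surjective.

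\emph{Dimension count.} Because $\chi_\cH$ is squarefree, Corollary~\ref{cor:squarefree} gives $\dim Y_j=d-1$. The fiber dimension theorem applied to the surjection $Y_j\to V$ of irreducible varieties then gives, for generic $x\in V$,
\[
\dim V \,=\, \dim Y_j-\dim\bigl(\pi|_{Y_j}^{-1}(x)\bigr) \,=\, d-1-\dim\bigl(\pi|_{Y_j}^{-1}(x)\bigr).
\]

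\emph{Main obstacle.} It remains to replace $\pi|_{Y_j}^{-1}(x)$ by the full fiber $\pi^{-1}(x)$, which also meets the other components $Y_i$. For generic $x\in V$ it suffices that every component $Y_i$ through $x$ contributes a fiber of dimension at most that of $Y_j$. I would handle the two kinds of $Y_i$ separately.

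\begin{itemize}
\item If $\pi(Y_i)\supseteq V$, then $\pi(Y_i)=V$, because $V$ is an irreducible component of $\mathcal{E}(\cH)$ and $\pi(Y_i)$ is an irreducible closed subset of $\mathcal{E}(\cH)$. By the same dimension count, its generic fiber over $V$ has dimension $d-1-\dim V$, which equals the fiber dimension of $Y_j$.
\item Otherwise $\pi(Y_i)\cap V$ is a proper closed subset of $V$, and generic $x$ avoids it.
\end{itemize}

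So for $x$ outside finitely many proper closed subsets of $V$, every component of $\pi^{-1}(x)$ has dimension $d-1-\dim V$. Since $\pi^{-1}(x)=\bigcup_i \pi|_{Y_i}^{-1}(x)$, this gives the claimed formula for the full fiber.

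Finally, I would remark that $\pi^{-1}(x)$ is cut out linearly in $(t,\lambda)$, by $\widehat M(x)\,y=0$, but only inside the closure $\mathcal{J}(\cH)$. The argument above never uses this linearity, so it does not need to be established.
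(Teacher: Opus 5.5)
Your proof is correct and follows the same route as the paper: every horizontal component has dimension $d-1$ by Corollary~\ref{cor:squarefree}, and the formula is then the fiber-dimension count for a component $Y_j$ that maps onto $V$. Your case split on the remaining components $Y_i$ (those with $\pi(Y_i)=V$, versus those meeting $V$ in a proper closed subset) spells out what the paper compresses into two remarks: at a generic point $x$ the local dimension equals $\dim V$, and a general point of $\pi^{-1}(x)$ lies on a horizontal component of dimension $d-1$.
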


\begin{proof}
Since $V$ is smooth at $x$, the local dimension of $V$ at $x$ equals ${\rm dim}(V)$.
A general point in $\,\pi^{-1}(x)\,$ lies on a horizontal component. This has dimension $d-1$,
by Corollary \ref{cor:squarefree}. 
\end{proof}

\begin{corollary} \label{cor:d-2}
If \(\chi_{\mathcal H}\) is squarefree and 
the identity matrix is in $\cH$,
then
$\,{\rm dim} (\mathcal{E}(\cH)) \leq d-2$.
\end{corollary}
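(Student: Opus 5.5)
By Proposition \ref{prop:fiber}, it suffices to show that for every irreducible component $V$ of $\mathcal{E}(\cH)$ and a generic point $x \in V$, the fiber $\pi^{-1}(x)$ has dimension at least $1$. So the plan is to exhibit a line inside every nonempty fiber of $\pi$, using the fact that $\mathrm{I}_n \in \cH$.

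First, I write the identity as $\mathrm{I}_n = \sum_i c_i H_i$ for some fixed vector $c = (c_1,\ldots,c_d) \in \CC^d \setminus \{0\}$. Let $((t:\lambda),x)$ be a point of $\mathcal{J}(\cH)$ lying over a generic $x \in V$. Then $H(t)x = \lambda x$ holds, because these equations are satisfied on all of $\mathcal{I}(\cH)$. For every $s \in \CC$ we then get
\[
H(t+sc)\,x \,=\, H(t)x + s x \,=\, (\lambda+s)\,x .
\]
Hence the whole projective line $\{((t+sc : \lambda+s),x)\}$, spanned by $(t:\lambda)$ and $(c:1)$, satisfies the bilinear equations. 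These two points are distinct, since $(t:\lambda)=(c:1)$ would force $H(t)=\lambda \mathrm{I}_n$ and then every vector would be an eigenvector. For a generic point of a horizontal component, $H(t)$ is a generic matrix in $\cH$ with squarefree characteristic polynomial, so $H(t)$ is not scalar.

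The main obstacle is that these equations only cut out the unsaturated incidence scheme. I still have to show that this line lies in $\mathcal{J}(\cH)$ and not merely in $\{Hx=\lambda x\}$. My plan is to work with the horizontal component $Y_j$ containing the point. The translation $\phi_s : ((t:\lambda),x) \mapsto ((t+sc:\lambda+s),x)$ extends to a linear automorphism of $\PP^d \times \PP^{n-1}$ (acting on the first factor) that preserves $\{Hx=\lambda x\}$. I will check that it also preserves $\mathcal{J}(\cH)$. It permutes the horizontal components, because it maps a general point (a generic invertible $H(t)$ with eigenvector $x$) to another such point for general $s$, which keeps $\lambda+s \neq 0$, and it sends $\chi_\cH(t,\lambda)$ to $\chi_\cH(t+sc,\lambda+s) = \chi_\cH(t,\lambda)$. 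So it preserves each factor $P_j$ up to scalar, and by Proposition \ref{prop:dominant-components} it fixes each $Y_j$.

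Consequently the orbit line of a point of $Y_j$ stays in $Y_j$, by closedness, and $\dim \pi^{-1}(x) \ge 1$. Proposition \ref{prop:fiber} then gives $\dim V \le d-2$ for every component $V$, which is the claim.
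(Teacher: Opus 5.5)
Your argument is the paper's argument: write $\mathrm{I}_n=H(c)$, note that $H(t+sc)x=(\lambda+s)x$, conclude that the fiber $\pi^{-1}(x)$ contains the line through $(t:\lambda)$ and $(c:1)$, and finish with Proposition~\ref{prop:fiber}. The paper simply asserts that the translated points lie in $\pi^{-1}(x)$. Your extra step is a worthwhile addition: you check that the line lies in $\mathcal{J}(\cH)$, not only in the unsaturated scheme $\{Hx=\lambda x\}$. As written, however, that step has a flaw.

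\textbf{The map $\phi_s$ is not well defined on $\PP^d$.} The assignment $(t:\lambda)\mapsto(t+sc:\lambda+s)$ is a translation of $\CC^{d+1}$, not a linear map. Rescaling the representative $(t,\lambda)$ changes the image point, so it does not extend to an automorphism of $\PP^d$. Replace it by the linear maps $A_s(t,\lambda)=(t+s\lambda c,\,(1+s)\lambda)$ for $s\neq -1$.
\begin{itemize}
\item They satisfy $H(t+s\lambda c)-(1+s)\lambda\,\mathrm{I}_n=H(t)-\lambda\,\mathrm{I}_n$.
\item They rescale $\lambda$, so they preserve both $\langle Hx-\lambda x\rangle$ and $\langle\lambda\rangle$, and therefore the saturation $I_\cH$.
\item The orbit of a point with $\lambda\neq 0$ is the line through $(t:\lambda)$ and $(c:1)$ minus two points.
\end{itemize}

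\textbf{Each factor, not just $\chi_\cH$, must be fixed.} The invariance $\chi_\cH\circ A_s=\chi_\cH$ by itself only shows that $A_s$ permutes the factors $P_j$. To get $P_j\circ A_s=P_j$, observe that $P_j\circ A_s$ divides $\chi_\cH$ in $\CC[t,\lambda,s]$. By unique factorization it therefore does not involve $s$, so it equals its value $P_j$ at $s=0$. Alternatively, use connectedness of the parameter space.

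With these two repairs, Proposition~\ref{prop:dominant-components} gives $A_s(Y_j)=Y_j$. Taking closures puts the whole line in $Y_j$, and the rest of your proof goes through.
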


\begin{proof}
Fix $t_0$ in $\CC^d$ with ${\rm I}_n = H(t_0)$, and suppose that
 $(t:\lambda)$ is in a generic fiber  $\pi^{-1}(x)$. Then
 $(t + \mu t_0: \lambda + \mu)$ is in $\pi^{-1}(x)$ for all $\mu$.
This means that the fiber $\pi^{-1}(x)$ contains a line and is hence
positive-dimensional. The conclusion now follows from Proposition \ref{prop:fiber}.
\end{proof}

The linear space $\cH$ in Example \ref{ex:addsym} contains
the identity matrix ${\rm I}_5$, namely  for $t_0 = (1/4,1/4,0,0)$.
Corollary \ref{cor:d-2} explains why the two irreducible components
of $\mathcal{E}(\cH)$ have dimension  $d-2 = 2$, while the three components
of $\mathcal{J}(\cH)$ have dimension $d-1 = 3$.

\section{Low Rank Matrices}
\label{sec4}

The study of linear spaces of low rank matrices is a classical
subject at the interface of linear algebra and algebraic geometry \cite{EH}.
In this section we explore its relevance for eigenvector varieties.
We work in the space $\CC^{n \times d}$ of complex $n \times d$ matrices
and we consider any linear subspace ${\mathcal H}'$ of dimension $n$ in $\CC^{n \times d}$.
We write this subspace parametrically as in (\ref{eq:Hx}).
This justifies the notation $\mathcal{H}'$. Every  basis for $\mathcal{H}$
determines a basis for $\mathcal{H}'$. Both spaces are
given by the $d \times n \times n$ tensor with slices $H_1,H_2,\ldots,H_d$.
That tensor is our object of study.



The dimension of the eigenvector variety can be expressed in terms of linear algebra~data.

\begin{proposition}\label{prop:dimension} If $\chi_\cH $ is squarefree, then every irreducible component $V$ of
the eigenvector variety $\cE(\cH)$ satisfies $\dim V = \mathrm{rank}\, M(x)-1$,
where $x$ is a general point in $ V$.
\end{proposition}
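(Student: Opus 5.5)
We will derive the formula from Proposition~\ref{prop:fiber} by computing the fiber $\pi^{-1}(x)$ of the projection \eqref{eq:pi} in terms of linear algebra at the point $x$. By Proposition~\ref{prop:fiber}, for a general point $x$ of a component $V$ we have $\dim V = d-1-\dim \pi^{-1}(x)$. So it suffices to show that $\dim \pi^{-1}(x) = d - \operatorname{rank} M(x)$ for general $x\in V$.

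First we describe the full fiber of the incidence variety over $x=[\hat x]$. A point $((t:\lambda),x)$ satisfies $Hx=\lambda x$ exactly when $y=(t_1,\ldots,t_d,-\lambda)^T$ lies in $\ker \widehat M(\hat x)$. Hence the fiber of the (unsaturated) incidence locus over $x$ is the linear space $\PP(\ker \widehat M(\hat x))\subset\PP^d$. Since $x\in\mathcal E(\cH)$, this kernel is nonzero. Moreover $x$ is an eigenvector with nonzero eigenvalue of some matrix in $\cH$, so $\hat x$ lies in the column span of $M(\hat x)$. Therefore $\operatorname{rank}\widehat M(\hat x)=\operatorname{rank}M(\hat x)$, and
$$\dim \ker\widehat M(\hat x) \,=\, d+1-\operatorname{rank}M(\hat x),\qquad \dim\PP(\ker \widehat M(\hat x)) \,=\, d-\operatorname{rank}M(\hat x).$$

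The main obstacle is to show that, for general $x\in V$, the fiber $\pi^{-1}(x)$ of the \emph{horizontal} incidence variety is all of $\PP(\ker\widehat M(\hat x))$, or at least has the same dimension. A priori the linear space could meet vertical components, or the locus $\lambda=0$, in a larger piece than the horizontal part. I would argue as follows. The set $\{(y,x) : \widehat M(\hat x)y=0,\ x\in V\}$ restricted over a dense open $U\subset V$ where the rank of $M$ is constant is a projective bundle over $U$. It is therefore irreducible, of dimension $\dim V + d-\operatorname{rank}M$. Since $V$ is the image of some horizontal component $Y$, this bundle contains $Y\cap\pi^{-1}(U)$. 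The bundle is a closed subset of $\{Hx=\lambda x\}$. Its generic point has $\lambda\neq 0$ because $Y$ has this property, and $Y$ is contained in the bundle. So the bundle is contained in some irreducible component of $\mathcal I(\cH)$.

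I then need this component to be $Y$, i.e.\ horizontal. It is irreducible and contains the horizontal $Y$, so it dominates $\PP(\cH)$ and is thus horizontal. Corollary~\ref{cor:squarefree} gives it dimension $d-1$, the same as $Y$. Hence it equals $Y$, and so $Y\cap\pi^{-1}(U)$ is the entire bundle. It follows that $\pi^{-1}(x)=\PP(\ker\widehat M(\hat x))$ for general $x\in V$, which has dimension $d-\operatorname{rank}M(x)$.

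Substituting into Proposition~\ref{prop:fiber} gives $\dim V = d-1-(d-\operatorname{rank}M(x)) = \operatorname{rank}M(x)-1$. One could alternatively bypass Proposition~\ref{prop:fiber} and read the same equation directly from the bundle: $d-1=\dim Y=\dim V+d-\operatorname{rank}M$.
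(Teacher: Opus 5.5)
Your proof is correct and follows the same route as the paper. Both arguments identify the fiber of $\pi$ over a general $x\in V$ with a linear space of dimension $d-\operatorname{rank}M(x)$ and subtract this from $d-1$; the paper uses the translate $t_0+\ker M(x)$ in the chart $\lambda=1$, while you use $\PP(\ker\widehat M(\hat x))$ together with $\hat x\in\operatorname{colspan}M(\hat x)$. Your projective-bundle argument usefully justifies what the paper takes for granted, namely that this entire linear space lies on the horizontal component $Y$. In fact, since $Y$ is already an irreducible component of $\mathcal I(\cH)$, the inclusion $Y\subseteq Z$ gives $Z=Y$ immediately, without appealing to Corollary~\ref{cor:squarefree}.
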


\begin{proof}
By Corollary \ref{cor:squarefree} every horizontal component has dimension equal to $d-1.$
Since \(x\) is general, the open chart
\(\lambda\ne0\) is dense in the corresponding fiber of the
projection \eqref{eq:pi}.
Choose \(t_0\in\CC^d\) with \((t_0:1)\in\pi^{-1}(x)\).  Then
\((t_0+t:1)\in\pi^{-1}(x)\) holds precisely when \(M(x)t=0\).  Hence the affine chart
\(\lambda=1\) of the fiber is identified with the translated kernel \(t_0+\ker M(x)\). Thus the fiber $\pi^{-1}(x)$ has dimension $d-\mathrm{rank} \, M(x)$ and therefore $\dim V = (d-1)-(d-\mathrm{rank} \, M(x)) = \mathrm{rank} \, M(x)-1.$
\end{proof}

\begin{remark} Without the squarefree hypothesis, the same argument gives the formula
$$
\dim V =\mathrm{rank}\, M(x) + \mathrm{nullity}(H(t)-\lambda {\rm I}_n)-2,
$$ 
where $((t:\lambda),x)$ is a general point of the largest component of $\cJ(\cH)$ over $V$.\end{remark}

Proposition \ref{prop:dimension} can be used to construct linear spaces $\mathcal{H}$ 
with interesting eigenvector varieties $\mathcal{E}(\cH)$.
One starts with any linear space $\cH'$ in $\CC^{n \times d}$ of low rank matrices.
Every left syzygy of the general element $M(x)$ in $\cH'$ gives an equation that vanishes on 
  $\mathcal{E}(\cH)$,~as~follows:
 
\begin{proposition} \label{prop:syzygy}
On the constant-rank locus of $M(x)$, the full eigenvector variety $\mathcal{F}(\cH)$ is cut out by
polynomials $\sum_{i=1}^n x_i f_i(x)$ where
$(f_1,\ldots,f_n)$ is a syzygy in the left~kernel~of~$M(x)$. 
 \end{proposition}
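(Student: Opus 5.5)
The plan is to show two inclusions on the open set $U\subset\PP^{n-1}$ where $\operatorname{rank} M(x)$ equals its generic value $r$ along the locus in question. The key identity is $Hx=\sum_i t_iH_ix=M(x)\,t$. A point $x\in U$ is an eigenvector with nonzero eigenvalue of some $H\in\cH$ exactly when $x$ lies in the column space of $M(x)$. Indeed, $M(x)t=\lambda x$ with $\lambda\neq 0$ says precisely that $x\in\operatorname{im}M(x)$; conversely, if $x=M(x)s$, take $t=s$ and $\lambda=1$. So on $U$ the set of such eigenvectors is $\{x\in U: x\in\operatorname{im} M(x)\}$. Since $\mathcal F(\cH)$ is the closure of this set (by the Remark following the definition of $\mathcal I(\cH)$), it suffices to describe this membership condition by polynomials.

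Next, on $U$, I would use linear duality to write $\operatorname{im}M(x)$ as the annihilator of the left kernel $\{f: f^TM(x)=0\}$. The left kernel of $M(x)$, viewed over $\CC(x)$ or over the polynomial ring, is a module of syzygies. Because the rank is constant on $U$, its specializations at points of $U$ span the left kernel of $M(x_0)$ for each $x_0\in U$; this is the standard fact that a constant-rank matrix of polynomials has a kernel that is locally free and generated by polynomial syzygies, for instance by Cramer's rule applied to the $r\times r$ minors. Then $x_0\in\operatorname{im}M(x_0)$ iff $f(x_0)^Tx_0=\sum_i x_{0,i}f_i(x_0)=0$ for every polynomial syzygy $f$. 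This gives both directions. First, each polynomial $\sum_i x_if_i(x)$ vanishes on eigenvectors, since $f^TM(x)t=\lambda f^Tx$; this is the same computation as $x^THx=\lambda x^Tx$ in Example~\ref{ex:skew}. Second, their common zero set in $U$ consists only of points in $\operatorname{im}M(x)$.

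The delicate step is making the local generation precise, together with the gap between zero eigenvalue and nonzero eigenvalue. A point $x_0\in U$ with $x_0=M(x_0)s$ gives $\lambda=1\neq0$, so the resulting points are genuinely in $\mathcal F(\cH)$. I would also note that the statement is local on $U$, so taking closures only adds points outside $U$, which is allowed. For local generation, I would cover $U$ by the opens where a fixed $r\times r$ minor $\Delta$ is nonzero. There, the adjugate formula produces $n-r$ explicit polynomial left syzygies (built from $(r+1)\times(r+1)$ cofactor expansions) that are linearly independent wherever $\Delta\neq0$, and hence span the left kernel pointwise. That would complete the argument.
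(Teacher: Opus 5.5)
Your argument is correct and follows the same route as the paper. Both characterize the points of $\mathcal F(\cH)$ on the constant-rank locus by $x\in\operatorname{colspan}M(x)$, dualize to the left kernel of $M(x)$, and use the fact that polynomial left syzygies specialize to span the pointwise left kernel where the rank is constant. You also supply two details the paper leaves implicit, and these make its one-line ``specializes'' claim rigorous: the explicit cofactor syzygies on each chart $\Delta\neq 0$, and the observation that passing to the Zariski closure adds no points of $U$, because $U$ is the open locus of generic rank.
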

 
 \begin{proof}
 A nonzero vector $\hat x$ lies in $\cF(\mathcal H)$ if and only if $\hat x\in \operatorname{colspan} M(\hat x)$. Equivalently if and only if every row vector in $\ker M(\hat x)^T$ annihilates $\hat x$. On the constant-rank locus, the left syzygy module specializes to this left kernel, so the equations are necessary~and~sufficient.
 \end{proof}

 We next discuss
two sources of low-rank spaces $\mathcal{H}'$.
For instance,  for odd integers $n$, we can take any
$n$-dimensional linear space of skew-symmetric $n \times n$ matrices. These determine $n$-dimensional linear spaces $\cH$ with eigenvector varieties of positive codimension.

\begin{example}[$n=5$]
Let $\mathcal{H}'$ be the following space of skew-symmetric 
$5 \times 5$ matrices:
\begin{equation}
\label{eq:Hx5}
 \bigl[ H_1 \,x\, |\, H_2\, x \,|\, \,\cdots \,\,|\, H_5 \,x \,\bigr]\,\,=\,\,
\begin{bmatrix} 
\,  0 & \phantom{-}x_1 &\phantom{-}x_3 &\phantom{-}x_4 &\phantom{-}x_5\, \, \\
\,     -x_1 & 0 & \phantom{-}x_2 &\phantom{-}x_5 &\phantom{-}x_1\, \, \\
\,     -x_3 & -x_2 & 0 & \phantom{-}x_3 &\phantom{-}x_2 \, \,\\
\,     -x_4 & -x_5 & -x_3 &  0 &\phantom{-}x_4 \, \,\\
\,     -x_5 & -x_1 & -x_2 & -x_4 & \,\,\, 0 \,\,
\end{bmatrix}. \qquad
\end{equation}
The incidence variety $\mathcal{I}(\cH)$ is irreducible of codimension $5$ in $\PP^5 \times \PP^4$.
We obtain its ideal by saturation from the five entries of the vector
$ \sum_{i=1}^5 t_i H_i x \, - \, \lambda x$.
By eliminating $t,\lambda$ from that ideal, we see that $\mathcal{F}(\cH)=\mathcal{E}(\cH)$ is a cubic
threefold in $\PP^4$.
Alternatively, we can compute it 
with the syzygy method in Proposition  \ref{prop:syzygy}.
Namely, the left kernel of (\ref{eq:Hx5}) is generated by
$$ f(x) \, = \,\bigl( x_1x_3+x_2x_4-x_2x_5 \, ,\,
      x_2 x_4 - x_3x_4 - x_3x_5 \, ,\,
      x_5^2   \, ,\,
     -x_1 x_2+x_1x_3-x_2x_5 \, ,\,
     x_1 x_3 + x_2 x_4 - x_3 x_5  \bigr).
$$
Using Proposition~\ref{prop:syzygy}, we obtain the cubic threefold in $\PP^4$ defined by
$$ \sum_{i=1}^5 x_i \cdot f_i(x) \,\, = \,x_1^2 x_3 + x_2^2 x_4 + x_1 x_3 x_4 
- x_2 x_3 x_4 - x_1 x_2 x_5 + x_1 x_3 x_5 - x_2 x_3 x_5. $$
\end{example}

Eisenbud and Harris \cite{EH} offer a systematic study of
low rank matrix spaces. We briefly recall
their theory of {\em compression spaces}.
Fix any vector space decompositions 
\begin{equation}
\label{eq:deco}
 \CC^d = U \oplus U' \quad {\rm and} \quad
\CC^n = V \oplus V' \quad {\rm with} \quad
{\rm dim}(U) = u \,\,\, {\rm and} \,\,\,{\rm dim}(V) = v. 
\end{equation}
Given this, we consider the space of all linear maps
$\CC^d \rightarrow \CC^n$ which map $U'$ into $V$.
This space has dimension $dv+nu-uv$, 
and each of its elements is
a linear map of rank at most $r = u+v$.
Suppose now that $ n \leq dv+nu-uv$. Then we can
choose an $n$-dimensional linear subspace $\mathcal{H}'$.
Any such compression space  $\mathcal{H}'$
in $\CC^{n \times d}$
determines a $d$-dimensional linear subspace $\mathcal{H}$
of $\CC^{n \times n}$
whose eigenvector variety $\mathcal{E}(\cH) $
has dimension at most $r-1$ in $\PP^{n-1}$.

In the following examples the
decompositions in (\ref{eq:deco}) are into coordinate subspaces.

\begin{example}[A threefold in $\PP^5$]
Let $n=d=6$ and $u=v=2$. Fix a
$6 \times 6$ matrix $M(x)$ whose lower right $4 \times 4$ block 
is zero, and whose other $20$ entries are
generic linear forms in $\CC[x]=\CC[x_1,x_2,x_3,x_4,x_5,x_6]$.
Then $\mathcal{H}'$ has rank $r = 4$. 
Its left kernel is generated by four quadratic
syzygies $f(x) = (f_1(x),\ldots,f_6(x))$.
The four cubics $\sum_{i=1}^6 x_i f_i(x)$ are the
$3 \times 3$ minors of a $3 \times 4$ matrix whose entries
are linear forms in $\CC[x]$.
These generate a prime ideal, whose variety is
an irreducible threefold of degree $6$.
It is not contained in the rank-drop locus and therefore
by Proposition \ref{prop:syzygy}, these cubics cut out the full eigenvector variety $\mathcal{F}(\cH)$. A computation verifies that it equals the eigenvector~variety~$\mathcal{E}(\cH)$.
\end{example}

\begin{example}[Eigenvector curves]
Let $u=v=1$ and $n,d \geq 2$. 
Fix an  $n$-dimensional space $\mathcal{H}'$ of
$n \times d$ matrices with zero entries outside the first row or first column.
This specifies a $d$-dimensional subspace $\mathcal{H}$
of $\CC^{n \times n}$ whose eigenvector variety in $\PP^{n-1}$ is finite or a curve.
\end{example}

\section{Lie Algebras}
\label{sec5}

Lie algebras are a natural source of linear spaces of square matrices.
We fix the action of 
a \(d\)-dimensional
connected reductive Lie group  \(G\) on $\CC^n$.
The induced Lie algebra representation is a map
$\rho:\mathfrak g\to \mathfrak{gl}_n$.
The image, \(\rho(\mathfrak g)\subseteq \mathfrak{gl}_n\), is a linear
space of \(n\times n\) matrices. 
Since the eigenvector variety depends only on the chosen representation
through its image, we will often suppress $\rho$ and write $\mathfrak g$
for the matrix Lie algebra $\rho(\mathfrak g)\subseteq \mathfrak{gl}_n$. 
Thus, in our dimension counts, $\dim \mathfrak g$ means $\dim \rho(\mathfrak g)$. These dimensions agree when the~representation~is~faithful.

\begin{proposition}\label{prop:Ginv}
The eigenvector variety \(\mathcal E(\mathfrak g)\subseteq \mathbb P^{n-1}\) is invariant under the $G$-action.
\end{proposition}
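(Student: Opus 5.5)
The plan is to use the adjoint action. For $g \in G$ and $X \in \mathfrak g$ we have $\rho(g)\rho(X)\rho(g)^{-1} = \rho(\mathrm{Ad}_g X)$, so conjugation by $\rho(g)$ maps the matrix space $\cH = \rho(\mathfrak g)$ onto itself. It is a linear automorphism of $\cH$. If $Hx = \lambda x$ with $H \in \cH$, then
\[
(gHg^{-1})(gx) \,=\, \lambda\, gx ,
\]
so $gx$ is an eigenvector of $gHg^{-1} \in \cH$ with the same eigenvalue. It remains to transfer this from individual matrices to the eigenvector variety, which by definition only sees \emph{generic} matrices of $\cH$, equivalently the horizontal incidence variety $\mathcal J(\cH)$ (Proposition \ref{prop:eigenimage}).

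First, consider the automorphism $\Phi_g$ of $\PP^d\times\PP^{n-1}$ given by
\[
((t:\lambda),x)\,\mapsto\,((\mathrm{Ad}_g^* t : \lambda),\, gx),
\]
where $\mathrm{Ad}_g^*$ is the linear map on $t$-coordinates induced by $H(t)\mapsto gH(t)g^{-1}$. This map preserves the ideal $\langle Hx-\lambda x\rangle$, since $g H(t) x - \lambda g x = g(H(t)-\lambda I)x$ and $g$ is invertible. It also preserves $\langle\lambda\rangle$ and $\langle x\rangle$, hence the saturation $I_\cH$. So $\Phi_g$ maps $\mathcal I(\cH)$ to itself.

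Second, $\Phi_g$ permutes irreducible components. It covers the linear automorphism of $\PP(\cH)$ induced by $\mathrm{Ad}_g$, so a component maps dominantly onto $\PP(\cH)$ if and only if its image does. Hence $\Phi_g(\mathcal J(\cH)) = \mathcal J(\cH)$. Since the projection to $\PP^{n-1}$ intertwines $\Phi_g$ with $x\mapsto gx$, Proposition \ref{prop:eigenimage} gives $g\cdot\cE(\mathfrak g) = \cE(\mathfrak g)$.

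As a sanity check, the same argument can be phrased with the algebraic closure of $\CC(t)$: the substitution $t\mapsto \mathrm{Ad}_g^* t$ is a field automorphism of $\CC(t)$. It sends the generic matrix to $gH(t)g^{-1}$ and sends its eigenvectors to $g$ times eigenvectors, and taking Zariski closures commutes with the automorphism $x \mapsto gx$.

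The main obstacle is the bookkeeping that $\Phi_g$ respects horizontality, i.e.\ that genericity of $t$ is preserved. This holds because $\mathrm{Ad}_g$ is a linear isomorphism of $\cH$: it is invertible, with inverse $\mathrm{Ad}_{g^{-1}}$. Connectedness and reductivity of $G$ are not needed for this step.
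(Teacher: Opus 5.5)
Your proof is correct and rests on the same observation as the paper's: conjugation by $g$ preserves $\mathfrak g$ and sends an eigenvector $v$ of $H$ to the eigenvector $gv$ of $gHg^{-1}$. The paper stops there, while your automorphism $\Phi_g$ of $\mathcal I(\cH)$ spells out the step it leaves implicit, namely that genericity (horizontality) is preserved because $\mathrm{Ad}_g$ is a linear automorphism of $\cH$; note that the identity you actually need there is $H(\mathrm{Ad}_g^*t)\,gx-\lambda gx=g(H(t)-\lambda {\rm I}_n)x$, not the trivial one you displayed.
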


\begin{proof}
For $H\in\mathfrak g$, the conjugate,
$
gHg^{-1}
$, with an element $g \in G$
is also in $\mathfrak g$. Additionally, if $v$ is an eigenvector of $H$, then
$g\cdot v$ is an eigenvector of $gHg^{-1}$. Hence
$\mathcal E(\mathfrak g)$ is $G$-invariant.
\end{proof}

We follow \cite[Section~10.2]{MS} in discussing the weight decomposition for
representations of reductive Lie groups. Fix a maximal torus $T$ in the group $G$,
and let $r$ be the rank of $T$.
By restricting the representation $V=\CC^n$ to $T$, we obtain a
decomposition into weight spaces:
\begin{equation}
\label{eq:weightspaces}
 V\,\,=\,\,\bigoplus_{a\in\mathbb Z^r} V_a. 
 \end{equation}
The subspace $V_a$ consists of those vectors in $V$ that are scaled by $t^a$,
that is $t\cdot v=t^a v$ for all $t\in T$. These
spaces are called {\em weight spaces}, and $a$ is the corresponding weight. The multiplicity of the weight $a$ is defined to be the dimension of $V_a$.
For a general matrix $H$ in the Lie algebra $\mathfrak{t}$ of 
the torus $T$, distinct weights give distinct
eigenvalues for $H$. Hence the eigenspaces of $H$ are
exactly the weight spaces $V_a$.

\begin{proof}[Proof of Theorem \ref{thm:intro-lie}]

Recall that, for a complex reductive group $G$, the regular semisimple locus is dense in \(\mathfrak g\), and every regular
semisimple element is \(G\)-conjugate to an element of
the Lie algebra \(\mathfrak t\) of the maximal torus $T$. Thus a
general element of \(\mathfrak g\) is conjugate to an element of \(\mathfrak t\). By
choosing this element outside the finitely many hyperplanes where two distinct
weights coincide, the eigenspaces are exactly the weight spaces \(V_a\). Hence
 the eigenvectors of a general element of $\mathfrak g$ lie on the $G$-orbits of the projectivized weight spaces.
Conversely, every nonzero vector in a weight space $V_a$ is an eigenvector
of a general element of $\mathfrak t$. By Proposition~\ref{prop:Ginv}, every
point of $G\cdot \PP(V_a)$ is therefore an eigenvector of an element of
$\mathfrak g$. Taking closures and using the finiteness of the set of weights gives the equality.
\end{proof}


\begin{example}[Skew-symmetric matrices]\label{ex:skew3}
 In Example \ref{ex:skew} and \ref{ex:skew2} we considered $4\times 4$ skew-symmetric matrices, which is the $6$-dimensional Lie
algebra $\mathfrak{so}_4$ of the special orthogonal group $\mathrm{SO}_4$.
We decompose $\CC^4=E\oplus F$ into complementary maximal isotropic
subspaces, with bases $e_1,e_2$ of $E$ and $f_1,f_2$ of $F$. With respect
to this basis of $\CC^4$, a maximal torus is
$$
T\,\,=\,\,\bigl\{\operatorname{diag}(t_1,t_2,t_1^{-1},t_2^{-1}) : t_1,t_2\in\CC^*\bigr\}
\,\, \subseteq\,\, \mathrm{SO}_4.
$$
The standard representation decomposes into four one-dimensional weight spaces
$$
E \oplus F \,\,=\,\, \CC e_1\,\oplus\, \CC e_2\,\oplus \,\CC f_1\,\oplus\, \CC f_2.
$$
Here $r=2$ and the weights are $(\pm 1,0),(0,\pm 1) \in \ZZ^2$.
Since the weight vectors lie in $E$ or $F$, they are isotropic.
Since  $\mathrm{SO}_4$ preserves isotropy and acts transitively on the isotropic quadric
(\ref{eq:norealpoints}),
 the eigenvector variety is a single $\mathrm{SO}_4$-orbit:
$ \mathcal E(\mathfrak{so}_4) = \overline{\mathrm{SO}_4\cdot [e_1]} = \mathrm{SO}_4\cdot [e_1] $.
\end{example}

The given representation $\rho:\mathfrak g\to \mathfrak{gl}_n$ is called {\em minuscule} if it is irreducible and the Weyl group of $G$
acts transitively on the weights $a$ that occur in (\ref{eq:weightspaces}).
This property implies that each $V_a$ is one-dimensional.
For an introduction to minuscule  representations see \cite{Ses}.
The standard representation $\CC^{2m}$ of $\mathfrak{so}_{2m}$ is minuscule.
The case $m=2$ appears in Example \ref{ex:skew3}.

\begin{proposition} \label{thm:minuscule}
 The eigenvector variety  of a minuscule  representation is the projectivized $G$-orbit 
 of any highest weight vector in $\CC^n$. In fact, this $G$-orbit is Zariski closed in $\PP^{n-1}$.
\end{proposition}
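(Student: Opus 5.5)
We derive this from Theorem~\ref{thm:intro-lie}, which gives $\cE(\mathfrak g)=\bigcup_a \overline{G\cdot\PP(V_a)}$ as $a$ ranges over the weights of $V=\CC^n$. The plan has three steps: show every weight space is a line, show all the orbits $G\cdot\PP(V_a)$ coincide, and show this common orbit is closed.

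\textbf{Step 1: weight spaces are lines.} For a minuscule representation the weights form a single Weyl group orbit. The highest weight $\omega$ has multiplicity one, and Weyl-conjugate weights have equal multiplicities. Hence every $V_a$ is one-dimensional, and $\PP(V_a)$ is a single point $[v_a]$.

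\textbf{Step 2: the orbits coincide.} Let $N=N_G(T)$, so the Weyl group is $W=N/T$. An element $n\in N$ representing $w\in W$ maps $V_a$ onto $V_{w(a)}$. Indeed, for $s\in T$ and $v\in V_a$ we have
$$ s\cdot(n v)=n\,(n^{-1}sn)\,v=(n^{-1}sn)^a\, nv=s^{w(a)}\,nv .$$
Because $W$ acts transitively on the weights, $[v_a]=n\cdot[v_\omega]$ for a suitable $n\in N\subseteq G$. Therefore every orbit $G\cdot[v_a]$ equals $G\cdot[v_\omega]$, where $v_\omega$ is a highest weight vector. Theorem~\ref{thm:intro-lie} then reduces to $\cE(\mathfrak g)=\overline{G\cdot[v_\omega]}$.

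\textbf{Step 3: the orbit is closed.} This is the main obstacle, and I would use the standard fact that in an irreducible representation the orbit of the highest weight line is closed. Fix a Borel subgroup $B\supseteq T$ whose positive roots define $\omega$ as the highest weight. Then $B$ stabilizes the line $[v_\omega]$: $T$ scales it, and positive root vectors annihilate $v_\omega$ by maximality of $\omega$. So $P=\operatorname{Stab}_G[v_\omega]$ contains $B$ and is parabolic. Consequently $G\cdot[v_\omega]\cong G/P$ is a projective variety, and its image in $\PP^{n-1}$ is closed.

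Alternatively, closedness can be shown directly. Any orbit in the closure has strictly smaller dimension and contains a $B$-fixed point, by the Borel fixed point theorem. Irreducibility of $V$ makes $[v_\omega]$ the unique $B$-fixed point in $\PP(V)$. Hence the closure can contain no other orbit, and the orbit is closed.

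With closedness in hand, $\cE(\mathfrak g)=G\cdot[v_\omega]$. Different highest weight vectors, for different choices of $B$, are conjugate under $G$, so the statement holds for any highest weight vector.
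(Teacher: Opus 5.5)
Your proposal is correct and follows the paper's route: reduce via Theorem~\ref{thm:intro-lie}, use Weyl-group transitivity to see that all the orbits $G\cdot\PP(V_a)$ coincide, and then use that the orbit of a highest weight line is closed. You also fill in details the paper leaves implicit or delegates to a citation, namely the $N_G(T)$-representatives carrying $V_a$ to $V_{w(a)}$, and the parabolic-stabilizer (or Borel fixed point) argument for closedness.
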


\begin{proof}
We use the formula  given in (\ref{eq:LieEigVecs}).
The minuscule  hypothesis ensures that the orbits $G \cdot \PP(V_a)$
are the same for every $a$. Hence  $\mathcal{E}(\mathfrak{g}) = \overline{G \cdot \PP(V_a)}$,
where $a$ is a highest weight. Then $V_a = \CC v$ for any highest weight vector $v$.
Writing $[v] \in \mathbb{P}^{n-1}$ for the image of $v$, we conclude 
that $\mathcal E(\mathfrak{g}) = G \cdot [v] = \overline{G \cdot [v]}$. The 
last equality holds for every irreducible representation, since the 
orbit of a highest weight vector is closed in $\mathbb{P}^{n-1}$; 
see e.g.~\cite[Section~1.4]{GIT}.
\end{proof}

One benefit of the representation-theoretic viewpoint is that it immediately
produces many examples. Starting from any representation $\CC^m$ of $G$,
we may form new representations of $G$ by applying standard linear algebra operations,
such as tensor powers, symmetric powers, and exterior powers. For instance,
 consider the standard representation $\CC^m$ of $\mathfrak{gl}_m$. The induced Lie algebra action on the exterior power space $\wedge^k \CC^m \cong \CC^{\binom{m}{k}}$ is defined as
\begin{equation}
\label{eq:exterioraction}
g \cdot (v_1 \wedge \cdots \wedge v_k) \,\,= \,\,\sum_{i = 1}^k v_1 \wedge \cdots
\wedge v_{i-1} \wedge g v_i \wedge v_{i+1} \wedge \cdots \wedge v_k. 
\end{equation}
The map $\rho: \mathfrak{gl}_m \to \mathfrak{gl}(\wedge^k \CC^m)$ is obtained from
(\ref{eq:exterioraction}) by linearity.
We write $\mathcal A_{k,m}$ for its image. 

For \(1\leq k\leq m-1\), the exterior power representation of
\(\mathfrak{gl}_m\) on \(\wedge^k\mathbb C^m\) is faithful. Hence $\mathcal{A}_{k,m}$ is
 an $m^2$-dimensional linear space of $\binom{m}{k} \times \binom{m}{k}$ matrices.
These are called \textit{additive compound matrices} in the linear algebra literature.
Suppose $g$ is a diagonalizable $m \times m$ matrix with eigenvectors $v_1,v_2,\ldots,v_m$
and eigenvalues $\lambda_1,\lambda_2,\ldots,\lambda_m$. Its additive compound matrix $\rho(g)$ has the
$\binom{m}{k}$ eigenvectors $v_{i_1} \wedge \cdots \wedge v_{i_k}$ ~with~eigenvalues~$\lambda_{i_1}+\cdots+\lambda_{i_k}$. For \(k=m\), the representation is the trace character and the image is
one-dimensional.

\begin{corollary}\label{cor:addcomp}
    The eigenvector variety for the space $\mathcal{A}_{k,m}$
    of additive compound matrices is the Grassmannian in its Plücker embedding. In symbols,
     $ \mathcal{E}(\mathcal{A}_{k,m}) = \mathrm{Gr}(k,m) \subseteq \PP^{\binom{m}{k} - 1}$.
\end{corollary}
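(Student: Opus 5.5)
The plan is to deduce the statement from Proposition~\ref{thm:minuscule}, or directly from Theorem~\ref{thm:intro-lie}, applied to $G=\mathrm{GL}_m$ acting on $\wedge^k\CC^m$. Since $\mathcal{A}_{k,m}=\rho(\mathfrak{gl}_m)$ by definition, we have $\mathcal{E}(\mathcal{A}_{k,m})=\mathcal{E}(\mathfrak{gl}_m)$ in the notation of Section~\ref{sec5}. We also need the standing hypothesis $\det(H)\not\equiv 0$. It holds because $\rho(\mathrm{I}_m)=k\,\mathrm{I}$ is invertible.

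First I compute the weights. Take the diagonal maximal torus $T\subset \mathrm{GL}_m$. The basis vectors $e_{i_1}\wedge\cdots\wedge e_{i_k}$ are weight vectors, with weight the $0/1$ indicator vector of $\{i_1,\ldots,i_k\}$. These $\binom mk$ weights are pairwise distinct, so every weight space is a line. The Weyl group $S_m$ permutes coordinates and hence acts transitively on the $k$-subsets, so the representation is minuscule. Irreducibility of $\wedge^k\CC^m$ is standard. Alternatively, transitivity alone suffices: each orbit $G\cdot\PP(V_a)$ equals $G\cdot[e_{i_1}\wedge\cdots\wedge e_{i_k}]$ after applying a permutation matrix in $G$.

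Next I identify the orbit. The orbit of $[e_1\wedge\cdots\wedge e_k]$ under $\mathrm{GL}_m$ consists of all $[g e_1\wedge\cdots\wedge g e_k]$. These are exactly the decomposable $k$-vectors $[v_1\wedge\cdots\wedge v_k]$ with $v_1,\ldots,v_k$ linearly independent, because $\mathrm{GL}_m$ acts transitively on $k$-frames. By definition this set is the Plücker image of $\mathrm{Gr}(k,m)$, which is already closed. So Theorem~\ref{thm:intro-lie}, or Proposition~\ref{thm:minuscule}, gives $\mathcal{E}(\mathcal{A}_{k,m})=\mathrm{Gr}(k,m)$.

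The only delicate point is the setup of Section~\ref{sec5}, which assumes $G$ reductive and uses the representation through its image. The group $\mathrm{GL}_m$ is reductive, and for $1\le k\le m-1$ the representation is faithful. The edge case $k=m$ gives $\mathrm{Gr}(m,m)$, a point in $\PP^0$, which agrees trivially. As an elementary cross-check that avoids the Lie theory, one can also argue directly. A general $g\in\mathfrak{gl}_m$ has distinct eigenvalues, and the sums $\lambda_{i_1}+\cdots+\lambda_{i_k}$ over $k$-subsets are distinct for general $g$. So the eigenvectors of $\rho(g)$ are exactly the $v_{i_1}\wedge\cdots\wedge v_{i_k}$, and these are points of the Grassmannian. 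Conjugating by $\mathrm{GL}_m$ moves $\mathrm{span}(v_{i_1},\ldots,v_{i_k})$ to any $k$-plane, so every point of the Grassmannian arises and the inclusion is an equality.
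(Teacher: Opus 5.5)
Your proposal is correct and follows the paper's own route: you check that $\wedge^k\CC^m$ is minuscule with highest weight vector $e_1\wedge\cdots\wedge e_k$, identify its $\mathrm{GL}_m$-orbit with the Pl\"ucker-embedded Grassmannian, and invoke Proposition~\ref{thm:minuscule}. Your additional checks fill in details the paper leaves implicit. These are the distinct $0/1$ weights, $S_m$-transitivity, invertibility of $\rho(\mathrm{I}_m)=k\,\mathrm{I}$, the degenerate case $k=m$, and the direct eigenvector cross-check via subset sums of eigenvalues.
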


\begin{proof}
The $\mathfrak{gl}_m$-representation $\wedge^k \CC^m$ is minuscule,
with highest weight vector $v=e_1 \wedge \cdots \wedge e_k$.
The orbit of $[v]$ in
$ \PP^{\binom{m}{k} - 1}$ is the Grassmannian  $\mathrm{Gr}(k,m)$.
Now apply Proposition~\ref{thm:minuscule}.
\end{proof}

\begin{remark} \label{rmk:othergrassmannians}
The same argument works for the other classical Lie algebras. For example,
\[
\mathrm{SO}_m\cdot [e_1\wedge\cdots\wedge e_k] \,=\, \mathrm{OGr}(k,m),
\qquad
\mathrm{Sp}_{2m}\cdot [e_1\wedge\cdots\wedge e_k] \,=\, \mathrm{IGr}(k,2m).
\]
Here \(\mathrm{OGr}(k,m)\) and \(\mathrm{IGr}(k,2m)\) are the orthogonal and
symplectic Grassmannians of isotropic \(k\)-planes.
Note that
\(\mathrm{IGr}(m,2m)\) is the Lagrangian Grassmannian. Thus the eigenvector
varieties for the corresponding additive compound matrices are 
also Grassmannians.
\end{remark}

We next consider the symmetric power representation $\rho:\mathfrak{gl}_m\to\mathfrak{gl}(\mathrm{Sym}^k \CC^m)$.
This representation is not minuscule. Its eigenvector variety is more interesting than those in Proposition \ref{thm:minuscule}.
 We denote the image of $\rho$ by
 $\mathcal{S}_{k,m}$.  This is an $m^2$-dimensional linear space of $\binom{m + k - 1}{k} \times \binom{m + k - 1}{k}$ matrices.
The $5 \times 5$ matrix $H$ from Example \ref{ex:addsym} parametrizes this space for $m = 2$ and $k = 4$.
We next record an explicit formula for the matrices in $\mathcal{S}_{k,m}$.

The element in $\mathfrak{gl}_m$ given by
an $m \times m$ matrix $t = (t_{ij})$ is the differential operator
\begin{equation}
\label{eq:operator}
 \sum_{i=1}^m \sum_{j=1}^m\, t_{ij} \,z_j \frac{\partial}{\partial z_i} .
 \end{equation}
Let $Z$ be the lexicographically ordered list of all  $\binom{m + k - 1}{k}$ monomials 
$z^a$ of degree $k$ in the variables $z_1,z_2,\ldots,z_m$.
The matrix $H = H(t)$ that parametrizes  the space $\mathcal{S}_{k,m}$ has
 its rows and columns labeled by $Z$.
To compute the column of $H$ indexed by $z^a$ we apply
the operator (\ref{eq:operator}) to $z^a$. The coefficient of
$z^b$ in the resulting polynomial is the entry of $H$ in row $z^b$.

\begin{example}[$k\!=\!m\!=\!3$] \label{ex:33}
Here $d= m^2 = 9$ and $n = \binom{m + k - 1}{k}= 10$. The monomial vector~is
$$ Z = \bigl(z_1^3,  z_1^2 z_2, z_1^2 z_3, z_1 z_2^2, z_1 z_2 z_3, z_1 z_3^2, z_2^3, z_2^2 z_3, z_2 z_3^2, z_3^3 \bigr). $$
The linear space $\mathcal{S}_{3,3}$ consists of the following matrices with rows and columns labeled by $Z$:
{\footnotesize
\[
H \,= \, \begin{bmatrix}
3 t_{11} & t_{21} & t_{31} & 0 & 0 & 0 & 0 & 0 & 0 & 0 \\
3 t_{12} & 2 t_{11} \!+\! t_{22} & t_{32} & 2 t_{21} & t_{31} & 0 & 0 & 0 & 0 & 0 \\
3 t_{13} & t_{23} & 2 t_{11} \!+\! t_{33} & 0 & t_{21} & 2 t_{31} & 0 & 0 & 0 & 0 \\
0 & 2 t_{12} & 0 & t_{11} \!+\! 2 t_{22} & t_{32} & 0 & 3 t_{21} & t_{31} & 0 & 0 \\
0 & 2 t_{13} & 2 t_{12} & 2 t_{23} & t_{11} \!+\! t_{22} \!+\! t_{33} & 2 t_{32} & 0 & 2 t_{21} & 2 t_{31} &  0 \\
0 & 0 & 2 t_{13} & 0 & t_{23} & t_{11} \!+\! 2 t_{33} & 0 & 0 & t_{21} & 3 t_{31} \\
0 & 0 & 0 & t_{12} & 0 & 0 & 3 t_{22} & t_{32} & 0 & 0 \\
0 & 0 & 0 & t_{13} & t_{12} & 0 & 3 t_{23} & 2 t_{22} \!+\! t_{33} & 2 t_{32} & 0 \\
0 & 0 & 0 & 0 & t_{13} & t_{12} & 0 & 2 t_{23} & t_{22} \!+\! 2 t_{33} & 3 t_{32} \\
0 & 0 & 0 & 0 & 0 & t_{13} & 0 & 0 & t_{23} & 3 t_{33} 
\end{bmatrix}.
\]
}
\end{example}

The eigenvectors of $H$ are constructed from the eigenvectors of $t$ as follows.
Suppose that $t$ is diagonalizable with eigenvectors $v_1,v_2,\ldots,v_m$
and eigenvalues $\lambda_1,\lambda_2, \ldots,\lambda_m$.
Each eigenvector $v_i$ is identified with a linear form
$v_i \cdot z = v_{i1} z_1 + v_{i2} z_2 + \cdots + v_{im} z_m$.
For any monomial $z^a = z_1^{a_1} \cdots z_m^{a_m}$ in
the list $Z$ we consider the product of corresponding linear forms
\begin{equation}
\label{eq:vz}
 \prod_{i=1}^m (v_i \cdot z)^{a_i} \,\, \in \,\, \mathrm{Sym}^k \CC^m \,\simeq \, \CC^n.
\end{equation} 
This element in our vector space  is an eigenvector of the matrix $H$ with eigenvalue $\sum_{i=1}^m a_i \lambda_i$.
The coefficients of (\ref{eq:vz}) are algebraic functions
of the matrix entries $t_{ij}$. These functions are rational when $a_1 = a_2 = \cdots = a_m $.
Here is an explicit example to demonstrate this point.

\begin{example}[$k\!=\!m\!=\!3$] \label{ex:33eigenvectors}
For $a=(1,1,1)$, the construction in \eqref{eq:vz} gives a ternary cubic whose
coefficients are cubic polynomials over
$\QQ$ in the entries $t_{ij}$. This cubic is an
eigenvector of the $10\times10$ matrix $H$ in Example~\ref{ex:33}, with eigenvalue
$\operatorname{trace}(t)=\lambda_1+\lambda_2+\lambda_3$. The 
eigenvector equation is checked by
applying the differential operator \eqref{eq:operator} to the cubic. 
\end{example}

The eigenvector variety for $\mathcal S_{k,m}$ is a union of \textit{refined Chow varieties}.
These are indexed by integer partitions $\mu \vdash k$, and they comprise the 
associated products of linear forms:
$$ 
\operatorname{Ch}_\mu(\PP^{m - 1}) \,\,= \,\,\overline{\{
\,[\ell_1^{\mu_1} \cdots \,\ell_s^{\mu_s}] \, :\, [\ell_i] \in \PP^{m- 1}\,\}}.
$$
These varieties have inclusion patterns compatible with the refinement order of partitions. That is $\operatorname{Ch}_\mu(\PP^{m - 1}) \subseteq \operatorname{Ch}_\sigma(\PP^{m - 1})$ if and only if $\mu \le \sigma$. The Chow variety $\operatorname{Ch}_{1^k}(\PP^{m - 1})$ therefore contains all the other refinements, so we distinguish it and call it the \textit{full Chow variety}. 
For a study of this variety see \cite[Section 4.2.H]{GKZ}.
It is cut out by {\em Brill's equations}.
At the other end of the spectrum is the {\em Veronese variety}, which is
 $\operatorname{Ch}_{k}(\PP^{m - 1}) = \nu_k(\PP^{m- 1})$.
 

\begin{theorem} \label{thm:sympower}
The eigenvector variety for the symmetric power representation of  $\mathfrak{gl}_m$ equals
\begin{equation}
\label{eq:sympower}
\mathcal{E}(\mathcal{S}_{k,m})
\,\,=\,
\bigcup_{\substack{\mu \vdash k\\ \ell(\mu) \leq m}}
\operatorname{Ch}_{\mu}(\PP^{m - 1}).
\end{equation}
The irreducible components are given by
 the {\em maximal} partitions \(\mu\vdash k\) of length at most $m$. 
 Here $\mu$ being maximal means that  the length  $\ell(\mu)$
 is the minimum of $m$ and $k$.
  \end{theorem}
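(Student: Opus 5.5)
The plan is to apply Theorem~\ref{thm:intro-lie} to $G=\mathrm{GL}_m$ acting on $\mathrm{Sym}^k\CC^m$, which is reductive, with the diagonal torus $T$ as maximal torus. The weight spaces are easy to describe: the monomial $z^a$ is scaled by $t^a$, so the weights are exactly the exponent vectors $a\in\ZZ_{\ge0}^m$ with $|a|=k$. Distinct monomials have distinct weights, so each $V_a=\CC z^a$ is one-dimensional, and Theorem~\ref{thm:intro-lie} gives $\mathcal E(\mathcal S_{k,m})=\bigcup_a \overline{\mathrm{GL}_m\cdot[z^a]}$.

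Next we identify each orbit closure. An element $g\in\mathrm{GL}_m$ acts on linear forms by substitution, so it sends $z^a=\prod_i z_i^{a_i}$ to $\prod_i \ell_i^{a_i}$, where $\ell_1,\ldots,\ell_m$ are the images of the coordinates $z_1,\ldots,z_m$, and these are linearly independent. Conversely, every such product with independent $\ell_i$ arises in this way. Drop the zero entries of $a$ and sort the rest to get a partition $\mu\vdash k$ with $\ell(\mu)\le m$. The orbit is then the set of products $\ell_1^{\mu_1}\cdots\ell_s^{\mu_s}$ with linearly independent $\ell_i$. This is a dense open subset of the irreducible parametrized variety $\operatorname{Ch}_\mu(\PP^{m-1})$, because independence is an open condition on $(\PP^{m-1})^s$ and is nonempty when $s\le m$. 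Hence $\overline{G\cdot[z^a]}=\operatorname{Ch}_\mu(\PP^{m-1})$. Every partition of length at most $m$ occurs as some $a$, which proves \eqref{eq:sympower}.

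For the components, each $\operatorname{Ch}_\mu$ is irreducible, so the components are its maximal members under inclusion. We use the stated rule $\operatorname{Ch}_\mu\subseteq\operatorname{Ch}_\sigma$ iff $\mu\le\sigma$ in refinement order. Containment when $\mu$ is coarser is clear: setting some of the $\ell_i$ equal in a $\sigma$-product merges parts. The converse, non-containment, is the main point to check, and it can be done with a dimension count. Since $\mathrm{Sym}(\CC^m)$ is a UFD, a general element of $\operatorname{Ch}_\mu$ has a unique factorization, so $\dim\operatorname{Ch}_\mu=\ell(\mu)(m-1)$. A partition whose length is below $\min(m,k)$ can be refined by splitting a part $\ge2$ while keeping length at most $m$. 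It is therefore contained in a strictly larger $\operatorname{Ch}$ and is not a component. Conversely, take two distinct partitions $\mu\ne\sigma$ of the same length $\min(m,k)$. Their Chow varieties have equal dimension and are both irreducible, so neither contains the other: containment would force equality. Equality fails because a general element of $\operatorname{Ch}_\mu$ has multiplicity pattern $\mu$, by unique factorization. So the maximal partitions give exactly the irreducible components.

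The expected obstacle is this last step: making sure the closure $\operatorname{Ch}_\mu$ contains no points whose general factorization type would place it inside $\operatorname{Ch}_\sigma$. Unique factorization of a general element handles this. When $k\le m$ the only maximal partition is $1^k$, and the union collapses to the full Chow variety.
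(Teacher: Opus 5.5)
Your proposal is correct and follows the same route as the paper. Both apply Theorem~\ref{thm:intro-lie} with the one-dimensional monomial weight spaces, identify each orbit closure $\overline{\mathrm{GL}_m\cdot[z^a]}$ with $\operatorname{Ch}_\mu(\PP^{m-1})$, and keep only the maximal partitions. The one difference is that you prove the two supporting facts directly, where the paper cites the literature for the orbit-closure identification and asserts non-containment from the refinement order: you show the orbit is the image of a dense open subset of $(\PP^{m-1})^{\ell(\mu)}$, and you get non-containment from $\dim \operatorname{Ch}_\mu(\PP^{m-1})=\ell(\mu)(m-1)$ together with unique factorization.
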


\begin{proof}
Each weight space in (\ref{eq:weightspaces})
 is one-dimensional and spanned by a monomial:
 \[
\mathrm{Sym}^k \CC^m
\,=\,
\bigoplus_{|a|=k} \CC z^a.
\]
By Theorem \ref{thm:intro-lie},  the eigenvector variety 
is the union of the orbit closures
$\overline{\mathrm{GL}_m\cdot [z^a]}$.
This depends  only on the partition 
$\mu=(\mu_1,\ldots,\mu_s)$
obtained by
sorting the nonzero entries of $a$:
\[
\overline{\mathrm{GL}_m\cdot [z^a]}
\,\,=\,\,
\operatorname{Ch}_{\mu}(\PP^{m - 1}).
\]
Indeed, the orbit consists of all products $[\ell_1^{\mu_1} \cdots 
\ell_s^{\mu_s}]$ with $\ell_1, \ldots, \ell_s$ linearly 
independent. The closure allows the linear forms to coincide. 
The identification $\overline{{\rm GL}_m \cdot [z^a]} = 
\mathrm{Ch}_\mu(\mathbb{P}^{m-1})$ follows from the fact that 
$\mathrm{Ch}_\mu(\mathbb{P}^{m-1})$ is irreducible and is the 
unique ${\rm GL}_m$-orbit closure containing $[z^a]$; 
see~\cite[Section~4.2.H]{GKZ} for the case $\mu = 1^k$ and 
\cite[Proposition~1.4]{LP} for the general case. Since a monomial in \(m\) variables has at
most \(m\) nonzero exponents, the partitions
$\mu$ that occur are precisely those with \(\ell(\mu)\le m\).

The refinement order on partitions gives the inclusions among refined Chow
varieties. Therefore only the maximal partitions are needed.
Since their
refined Chow varieties are irreducible and not contained in one another, they
give the irreducible decomposition.
\end{proof}

We conclude with a discussion of the case $k \le m$.
There is a unique maximal partition, namely  $\mu = 1^k$, and
 the eigenvector variety is the full Chow variety $\mathcal{E}(\mathcal{S}_{k,m})= \mathrm{Ch}_{1^k}(\PP^{m - 1})$,
 whose points are products of linear forms.
  This variety is irreducible, and it is the closure of a single $\mathrm{GL}_m$-orbit. 
The full Chow variety $ \mathrm{Ch}_{1^k}(\PP^{m - 1})$ is cut out by the 
classical equations that were found by Brill   \cite[Section 4.2.H]{GKZ}.

 \begin{example}[$m=3$]
 If $k=2$ then the eigenvector variety is a hypersurface in $\PP^5$, namely
 the set of all symmetric $3 \times 3$ matrices of rank $\leq 2$. Consider $k=3$, so we are interested in the eigenvectors of 
 the $10 \times 10$ matrix $H$ in Example \ref{ex:33}.
 The variety   $\mathcal{E}(\mathcal{S}_{3,3})= \mathrm{Ch}_{111}(\PP^{2}) \subset \PP^9$
 has dimension $6$ and degree $15$. Example  \ref{ex:33eigenvectors}
 gives a parametric representation.
  The prime ideal of $\mathcal{E}(\mathcal{S}_{3,3})$ is generated by
 $35$ quartics in $10$ variables, namely the Brill equations.
  \end{example}
 
 
\section{Multidegree of the Horizontal Incidence Variety}
\label{sec6}

This section computes dimensions and degrees of eigenvector varieties from
the Chow class of the horizontal incidence variety.  This
is useful because it avoids computing defining equations for the eigenvector
variety itself.  The same class also controls what happens after 
replacing the matrix space
\(\mathcal H\) by a sufficiently general linear subspace of $\cH$.
We fix the projections
\[   \pi_1:\PP^d\times\PP^{n-1}\longrightarrow \PP^d \qquad {\rm and}\qquad \pi_2:\PP^d\times\PP^{n-1}\longrightarrow \PP^{n-1}.
\]

 Throughout this section we assume that the horizontal incidence 
variety $\mathcal J(\mathcal{H})$ is equidimensional of dimension $d-1$. 
Let $Y_1, \ldots, Y_r$ denote the irreducible components of 
$\mathcal J(\mathcal{H})$. Thus each $Y_j$ has codimension $n$ in 
$\mathbb{P}^d \times \mathbb{P}^{n-1}$. This holds in particular  when 
$\chi_\mathcal{H}$ is squarefree, by Corollary~\ref{cor:squarefree}. In practice, 
one can verify this by checking that 
$\mathrm{Res}_\lambda(\chi_\mathcal{H}, 
\partial_\lambda \chi_\mathcal{H}) \neq 0$ in $\mathbb{C}[t]$, 
which is an open condition on $\mathcal{H}$. When $\chi_\mathcal{H}$
has repeated factors, the formulas below remain valid if 
$[\mathcal J(\mathcal{H})]$ is interpreted as a cycle, with components 
counted with multiplicity as in~\eqref{eq:charpolyfactor}.  We also assume that the components whose
images under \(\pi_2\) have maximal dimension have distinct images.  Without
this assumption, the same formulas compute the degree of the corresponding
cycle, with repeated images counted with multiplicity.

The Chow ring of the ambient product of two projective spaces is
\begin{equation}
\label{eq:chowring}
A^*(\PP^d\times \PP^{n-1}) \,\,
 =\,\,
\ZZ[u,v]/\langle u^{d+1},v^n\rangle ,
\end{equation}
where \(u\) and \(v\) are the hyperplane classes.
Since \(Y_j\) has codimension \(n\), we write
\[
 [Y_j] \,\,
 =
 \sum_{a=1}^{\min\{d,n\}}
 \gamma_a(Y_j)u^av^{n-a}.
\]
The term with \(a=0\) vanishes because \(v^n=0\).  
By the additivity of Chow
classes,
\begin{equation}
\label{eq:classesadditive}
[\mathcal J(\mathcal H)]
\,\, =\,\,
 \sum_{j=1}^r [Y_j]
\,\, =
 \sum_{a=1}^{\min\{d,n\}} \!\!
 \gamma_a(\mathcal J(\mathcal H))u^av^{n-a},
\qquad
\gamma_a(\mathcal J(\mathcal H))\,=\,
\sum_{j=1}^r\gamma_a(Y_j).
\end{equation}

Using intersection theory notation, the
 coefficient \(\gamma_a(Y_j)\) can be written as follows:
\begin{equation}
\label{eq:gamma-intersection}
\gamma_a(Y_j)
\,=\,
\int_{\PP^d\times\PP^{n-1}}[Y_j]\, u^{d-a}v^{a-1}.
\end{equation}
This has the following geometric meaning:
The number $\gamma_a(Y_j)$
counts, with multiplicity, the intersection of \(Y_j\) with
\(d-a\) general hyperplanes from the first factor and \(a-1\) general
hyperplanes from the second factor.  These coefficients are known as multidegrees.
They can be computed symbolically in \texttt{Macaulay2} \cite{M2} with the
command \texttt{multidegree}, or numerically by solving the intersection
problems in \eqref{eq:gamma-intersection}, for instance with
\texttt{HomotopyContinuation.jl}~\cite{BT}.

\begin{example}[Generic case]
Let \(d=n\), and let \(\mathcal H\in {\rm Gr}(n,n^2)\) be generic 
in the sense of
Section~\ref{sec2}.  Then \(\mathcal J(\mathcal H)\) is the complete
intersection defined by the \(n\) bilinear equations
\((H(t)-\lambda {\rm I}_n)x=0\) in \(\PP^n\times\PP^{n-1}\).  Each equation has
class \(u+v\), and hence
\[
        [\mathcal J(\mathcal H)]
        \,=\,(u+v)^n
        \,=\,\sum_{a=1}^n \binom{n}{a}u^av^{n-a}.
\]
Thus \(\gamma_a(\mathcal J(\mathcal H))=\binom{n}{a}\), in agreement with
Theorem~\ref{thm:generic}.
\end{example}

We use the standard convention that the degree of a non-equidimensional
projective variety is the degree of its top-dimensional component.  The generic fiber dimension~of~\eqref{eq:pi}~is:
\[
        f=d-1-\dim\mathcal E(\mathcal H),
\]
where \(\dim\mathcal E(\mathcal H)\) denotes the largest dimension of an
irreducible component of $\mathcal E(\mathcal H)$.

\begin{theorem}
\label{thm:degreefromclass}
Under the assumptions above, we have
\[
        f=d-\max\{a:\gamma_a(\mathcal J(\mathcal H))>0\}
        \quad {\rm and} \quad
        \deg\mathcal E(\mathcal H)
        =\gamma_{d-f}(\mathcal J(\mathcal H)).
\]
\end{theorem}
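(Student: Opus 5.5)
We will work one component at a time and then sum, using \eqref{eq:classesadditive}. Fix a component $Y=Y_j$ of $\mathcal J(\mathcal H)$, of dimension $d-1$. Let $e=\dim \pi_2(Y)$, so the generic fiber of $\pi_2|_Y$ has dimension $d-1-e$. The key claim is:
\begin{itemize}
\item $\gamma_a(Y)=0$ for $a>e+1$;
\item $\gamma_{e+1}(Y)=\deg\pi_2|_Y\cdot\deg \pi_2(Y)$, where $\deg\pi_2|_Y$ is the degree of the map onto its image when $e=d-1$, and $1$ otherwise.
\end{itemize}
Granting this, let $e_{\max}=\dim\mathcal E(\mathcal H)=\max_j \dim\pi_2(Y_j)$ (Proposition \ref{prop:eigenimage}). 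Since all $\gamma_a(Y_j)\ge 0$ (they are intersection numbers of an effective cycle with basepoint-free classes), the largest $a$ with $\gamma_a(\mathcal J(\mathcal H))>0$ is $e_{\max}+1=d-f$. The coefficient $\gamma_{d-f}(\mathcal J(\mathcal H))$ then sums $\deg\pi_2(Y_j)$ over the components attaining the maximal image dimension. Because those images are assumed distinct, this sum is exactly the degree of the top-dimensional part of $\mathcal E(\mathcal H)$, which gives both formulas.

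To prove the claim we use \eqref{eq:gamma-intersection}: $\gamma_a(Y)$ counts points of $Y\cap \pi_1^{-1}(L)\cap\pi_2^{-1}(M)$, where $L\subset\PP^d$ is a general linear space of codimension $d-a$ and $M\subset\PP^{n-1}$ is a general linear space of codimension $a-1$.

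First take $a-1>e$. A general $M$ of codimension $a-1$ misses $\pi_2(Y)$, so the intersection is empty and $\gamma_a(Y)=0$.

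Now take $a-1=e$. Then $M\cap\pi_2(Y)$ is a reduced set of $\deg\pi_2(Y)$ general points $x$ of $\pi_2(Y)$. Over each such $x$, the fiber $F_x=\pi_2^{-1}(x)\cap Y$ has dimension $d-1-e=d-a$. By Proposition \ref{prop:dimension} and its proof, the fiber of \eqref{eq:pi} is a linear space (the translated kernel of $M(x)$). So $F_x$ is a union of linear spaces of dimension $d-a$ and degree $1$ in $\PP^d$, and typically it is a single one. Intersecting with a general $L$ of codimension $d-a$ yields one point per linear piece. The total count is $\deg\pi_2(Y)$ times the number of top-dimensional linear pieces of the general fiber, which is the generic degree of $Y\to\pi_2(Y)$ in the relevant sense.

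Transversality at these points comes from Kleinman--Bertini. The hyperplane classes $u$ and $v$ are basepoint-free, so general translates meet $Y$ properly and with multiplicity one on the smooth locus. The general points avoid $\operatorname{Sing}(Y)$ by dimension count.

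The main obstacle is the claim that the general fiber of $Y\to\pi_2(Y)$ is a single reduced linear space. Without it, $\gamma_{d-f}$ would count images with multiplicity, which is exactly the caveat the statement flags. We plan to show this from the fiber description $\pi^{-1}(x)\cong\PP(\ker\widehat M(x))$ restricted to $\lambda\ne0$. This fiber is irreducible and linear, hence lies in a single component $Y_j$, and it has multiplicity one because $\chi_\mathcal H$ is squarefree (Corollary \ref{cor:squarefree}). If some component with maximal-dimensional image instead has a fiber that is not a single linear space, we will fall back on the cycle interpretation stated before the theorem.
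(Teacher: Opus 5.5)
Your proposal follows essentially the paper's proof: work one component at a time, read off from \eqref{eq:gamma-intersection} that $\gamma_a(Y_j)=0$ for $a>d-f_j$ and $\gamma_{d-f_j}(Y_j)=\deg \pi_2(Y_j)$ because the fibers of $Y_j\to\pi_2(Y_j)$ are linear, and then sum over the components with top-dimensional image.

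The step you flag as the main obstacle is one the paper simply asserts. It needs neither squarefreeness (which is not a hypothesis of this theorem) nor a retreat to cycles. Take a general $x$ in a top-dimensional $X_j=\pi_2(Y_j)$. The fiber of $\mathcal I(\cH)$ over $x$ is the whole irreducible space $\PP(\ker\widehat M(x))$, since $\mathcal I(\cH)$ is the closure of the solutions with $\lambda\neq0$ and $Y_j$ supplies one such solution over $x$. No vertical component can contain this fiber: a vertical component lies in some $\{g(t)=0\}$ with $g\neq0$, and a general fiber of the horizontal $Y_j$ does not. No other $Y_i$ can contain it either, because that would force $X_i=X_j$, which violates the distinctness assumption. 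Hence $(Y_j)_x=\PP(\ker\widehat M(x))$ is a single linear space, and in particular $\deg\pi_2|_{Y_j}=1$ when $f_j=0$.
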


\begin{proof}
For a component \(Y_j\), set \(X_j=\pi_2(Y_j)\), and let \(f_j\) be the
fiber dimension of the map \(Y_j\to X_j\) over a general point of \(X_j\).
Since \(\dim Y_j=d-1\), we have \(\dim X_j=d-1-f_j\).  The generic fibers of
\(\mathcal J(\mathcal H)\to\PP^{n-1}\) are projective linear spaces in the
first factor, because the eigenvector equations are linear in
\((t :\lambda)\). By \eqref{eq:gamma-intersection} the largest
power of \(u\) appearing in \([Y_j]\) is \(u^{d-f_j}\).  Indeed, if fewer
than \(f_j\) hyperplanes are imposed in the first factor, a general fiber is
not cut to finitely many points; if exactly \(f_j\) are imposed, a general
fiber is cut to one point, and the remaining \(d-1-f_j\) hyperplanes in the
second factor compute \(\deg X_j\).  Hence
\[
        \gamma_{d-f_j}(Y_j)\,=\,\deg X_j.
\]
The top-dimensional images are those with minimal \(f_j\), namely
\(f_j=f\).  The largest exponent of \(u\) in
\([\mathcal J(\mathcal H)]\) is \(d-f\), and the coefficient of
\(u^{d-f}v^{n-d+f}\) is the sum of the degrees of the top-dimensional images.
Since these images are assumed distinct, this sum is
\(\deg\mathcal E(\mathcal H)\).
\end{proof}

The same Chow class also gives the degrees obtained from general linear
subspaces of \(\mathcal H\). 
The irreducibility assumption below is important. When the horizontal incidence variety \(\mathcal J(\mathcal H)\) is reducible, lower-dimensional images may become visible after restriction.  

\begin{proposition}\label{prop:genericsubspaces}
Assume that \(\mathcal J(\mathcal H)\) is irreducible, and let
\(\mathcal L_c\subset\mathcal H\) be a general subspace of codimension \(c\).
If \(0\leq c\leq f\), then
\(\mathcal E(\mathcal L_c)=\mathcal E(\mathcal H)\).  If
\(c=f+i\), with \(0\leq i\leq d-1-f\),~then
\[
        \dim\mathcal E(\mathcal L_{f+i})=d-1-f-i,
        \qquad
        \deg\mathcal E(\mathcal L_{f+i})
        =
        \gamma_{d-f-i}(\mathcal J(\mathcal H)).
\]
\end{proposition}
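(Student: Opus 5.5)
The plan is to realize $\mathcal J(\mathcal L_c)$ as a general linear section of $\mathcal J(\mathcal H)$ by hyperplanes from the first factor, and then apply Theorem~\ref{thm:degreefromclass} to $\mathcal L_c$.

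\textbf{Step 1: $\mathcal J(\mathcal L_c)$ as a section.} Write $\mathcal L_c$ as the common zero set of $c$ general linear forms $\ell_1,\ldots,\ell_c$ in $t$. These are hyperplanes in $\PP^d=\PP(\cH\oplus\CC)$ of class $u$, and they do not involve $\lambda$. Points of $\mathcal I(\mathcal H)$ lying on these hyperplanes are exactly the incidence points for matrices in $\mathcal L_c$ with nonzero eigenvalue, where we view $\PP(\mathcal L_c\oplus\CC)\subset\PP^d$.

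\textbf{Step 2: horizontal part and its class.} I claim that for general $\ell_i$ the horizontal part $\mathcal J(\mathcal L_c)$ equals the part of $\mathcal J(\mathcal H)\cap\{\ell_1=\cdots=\ell_c=0\}$ dominating $\PP(\mathcal L_c)$. This has two halves.
\begin{enumerate}
\item Vertical components of $\mathcal I(\cH)$ lie over a proper closed subset $Z\subset\PP(\cH)$. A general $\PP(\mathcal L_c)$ is not contained in $Z$, so these components only contribute vertical pieces for $\mathcal L_c$.
\item $\mathcal J(\mathcal H)$ is irreducible of dimension $d-1$ and dominates $\PP(\cH)$. By Bertini, applied to the base-point-free system $\pi_1^*|u|$ restricted to hyperplanes of $\PP(\cH)$, the section is irreducible (when its dimension is $\ge 1$), generically reduced, of dimension $d-1-c$, and dominates $\PP(\mathcal L_c)$.
\end{enumerate}
Consequently its class is $[\mathcal J(\mathcal H)]\cdot u^c$ in $\PP^d\times\PP^{n-1}$. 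Its multidegrees relative to the smaller ambient space $\PP^{d-c}\times\PP^{n-1}$ are then
\[
\gamma_a(\mathcal J(\mathcal L_c))=\gamma_a(\mathcal J(\mathcal H))\quad\text{for } a\le d-c,
\]
and the terms with $a>d-c$ are killed by $u^{d-c+1}=0$. This identity is the formula \eqref{eq:gamma-intersection}, with $d$ replaced by $d-c$, once the $c$ hyperplanes are absorbed into the first factor.

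\textbf{Step 3: apply Theorem~\ref{thm:degreefromclass}.} By Theorem~\ref{thm:degreefromclass}, the largest $a$ with $\gamma_a(\mathcal J(\mathcal H))>0$ is $d-f$. Moreover the exponents of $u$ that occur form an interval: $\gamma_a(Y)$ is a mixed degree of $\pi_1^*\mathcal O(1)$ and $\pi_2^*\mathcal O(1)$ on the irreducible variety $Y$, and these are positive exactly in a range of consecutive values. Hence $\gamma_a>0$ for all $a$ in that range, down to $a=1$ when the image of $\pi_1$ is the hypersurface $\{\chi=0\}$ of dimension $d-1$.

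Now split into the two cases of the statement.
\begin{itemize}
\item If $c\le f$, then $d-c\ge d-f$, so the top exponent is unchanged. The fiber dimension of $\mathcal L_c$ becomes $f-c$, so $\dim\mathcal E(\mathcal L_c)=(d-c-1)-(f-c)=\dim\mathcal E(\cH)$. Since $\mathcal E(\mathcal L_c)\subseteq\mathcal E(\cH)$ (general eigenvectors of $\mathcal L_c$ lie in $\pi_2(\mathcal J(\cH))$) and $\mathcal E(\cH)$ is irreducible, the two are equal.
\item If $c=f+i$, the top exponent is $d-c=d-f-i$, which has positive coefficient by the interval property. Theorem~\ref{thm:degreefromclass}, applied in $\PP^{d-c}\times\PP^{n-1}$, gives fiber dimension $0$. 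Thus $\dim\mathcal E(\mathcal L_{f+i})=d-c-1=d-1-f-i$ and $\deg=\gamma_{d-f-i}(\mathcal J(\cH))$. Irreducibility makes the distinct-images hypothesis automatic.
\end{itemize}

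\textbf{Main obstacle.} The hardest part is Step 2: showing that general hyperplanes of $\PP(\cH)$ produce no new horizontal components for $\mathcal L_c$, and that the section is reduced and irreducible rather than just having the right class. For the degree formula, the class identity together with genericity suffices if the section is taken as a cycle. The remaining subtlety is positivity of $\gamma_{d-f-i}$ for all $i$ in range; for this I would invoke the standard log-concavity/interval result for multidegrees of irreducible varieties (Huh's theorem), or argue directly that the generic fiber dimension of $\pi_2$ on the section drops by exactly one per hyperplane until it reaches zero.
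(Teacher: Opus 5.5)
Your proof is correct. It shares the paper's skeleton: cut $\mathcal J(\cH)$ by $\PP(\mathcal L_c\oplus\CC)\times\PP^{n-1}$, use $[Y_c]=u^c\cdot[\mathcal J(\cH)]$, and read off $\int_{\mathcal J(\cH)}u^{f+i}v^{d-1-f-i}=\gamma_{d-f-i}$. The key steps, however, are justified differently.

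The paper argues directly with the linear fibers of $\pi\colon\mathcal J(\cH)\to\mathcal E(\cH)$. Over a dense open set these are $f$-planes in the first factor, so for $c\le f$ a general $\PP(\mathcal L_c\oplus\CC)$ meets them, and $\mathcal E(\mathcal L_c)$ contains a dense open subset of $\mathcal E(\cH)$. For $c=f+i$ the paper simply asserts that $Y_c$ has dimension $d-1-c$ and maps generically one-to-one onto $\mathcal E(\mathcal L_c)$.

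You instead make both cases formal consequences of Theorem~\ref{thm:degreefromclass} applied to $\mathcal L_c$ inside $\PP^{d-c}\times\PP^{n-1}$. Before doing so, you prove that $\mathcal J(\mathcal L_c)$ is exactly the section: vertical components of $\mathcal I(\cH)$ cannot create horizontal ones for a general $\mathcal L_c$, and Bertini gives irreducibility and generic reducedness. The paper uses this tacitly when it identifies $\pi_2(Y_c)$ with $\mathcal E(\mathcal L_c)$.

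The cost of your route is an external input for $c=f+i$. To know that the top $u$-exponent after cutting is exactly $d-c$, you need $\gamma_{d-c}(\mathcal J(\cH))>0$. You get this from the no-internal-zeros property of multidegrees of irreducible subvarieties of $\PP^d\times\PP^{n-1}$ (Huh), together with $\gamma_1>0$ and $\gamma_{d-f}>0$. The paper's direct route needs no such theorem. Once $Y_c$ is known to be nonempty of dimension $d-1-c$ and generically injective onto its image, $\gamma_{d-f-i}$ is visibly the degree of a nonempty variety. But the paper gives no argument for that generic injectivity. Your fallback — the generic fiber dimension drops by one per general hyperplane through $(0:1)$ until it reaches zero — is precisely the dimension count that supplies it.

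One minor slip: for $c=d-1$ the section is a finite set of points. It is then not irreducible, so the distinct-images hypothesis of Theorem~\ref{thm:degreefromclass} is not automatic. It still holds, because these points are eigenpairs of a single general matrix with distinct eigenvalues, hence they have distinct eigenvectors.
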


\begin{proof}
Over a dense open subset of \(\mathcal E(\mathcal H)\), the fibers of \eqref{eq:pi} are $f$-dimensional projective subspaces
in the first factor.  If \(c\leq f\), then a general codimension
\(c\) linear subspace of the first factor meets these fibers, so
\(\mathcal E(\mathcal L_c)\) contains a dense open subset of \(\mathcal E(\mathcal H)\).  Since
\(\mathcal E(\mathcal L_c)\subseteq \mathcal E(\mathcal H)\), the first assertion follows.
Now take \(c=f+i\), and set
\(
        Y_c=
        \mathcal J(\mathcal H)\cap\bigl(\PP(\mathcal L_c \oplus \CC)\times\PP^{n-1}\bigr).
\)
For general \(\mathcal L_c\), this intersection has class \(u^c\cap[\mathcal J(\mathcal H)]\),
dimension \(d-1-c=d-1-f-i\), and maps generically one-to-one onto
\(\mathcal E(\mathcal L_c)\).  This implies
\[
        \deg\mathcal E(\mathcal L_c)
       \, =\,
        \int_{Y_c}v^{d-1-f-i}
       \, =\,
        \int_{\mathcal J(\mathcal H)}u^{f+i}v^{d-1-f-i}.
\]
This integral extracts the coefficient \(\gamma_{d-f-i}(\mathcal J(\mathcal H))\)
from the class \([\mathcal J(\mathcal H)]\).
\end{proof}

\begin{example}
In Examples~\ref{ex:skew}, \ref{ex:skew2} and \ref{ex:skew3} we considered the Lie algebra \(\mathfrak{so}_4\).  Here,
\(d=6\), \(n=4\) and \(\mathcal J(\mathfrak{so}_4)\) is irreducible.  We compute
$\,
 [\mathcal J(\mathfrak{so}_4)]
 =
 4uv^3+4u^2v^2+2u^3v
 \in A^4(\PP^6\times \PP^3)$.
The largest exponent of \(u\) is \(3\), so \(f=6-3=3\).  Hence
\(\mathcal J(\mathfrak{so}_4)\to\mathcal E(\mathfrak{so}_4)\) has 
fiber dimension \(3\), and $
 \deg\mathcal E(\mathfrak{so}_4)=\gamma_3=2$.
This is the isotropic quadric in \(\PP^3\).  Proposition
\ref{prop:genericsubspaces} says that a general codimension \(3\) subspace
of \(\mathfrak{so}_4\) has the same eigenvector surface.  General subspaces
of codimension \(4\) and \(5\) give, respectively, a curve of degree
\(\gamma_2=4\) and \(\gamma_1=4\) points.
\end{example}

\begin{example}
\label{ex:revisit42}
We revisit Example~\ref{ex:addsym}, the case \(k=4,m=2\) in
Theorem~\ref{thm:sympower}.  Here \(d=4\), \(n=5\), and
\(\mathcal J(\mathcal H)=Y_1\cup Y_2\cup Y_3\).  The images of \(Y_1\) and
\(Y_2\) are the two surface components of \(\mathcal E(\mathcal H)\), while
\(\pi_2(Y_3)\) is a curve contained in their union.  The
Chow class decomposes as
\[
\begin{aligned}
[\mathcal J(\mathcal H)]
&\,=\,
5uv^4+10u^2v^3+10u^3v^2  \\
&\,=\,
(uv^4+2u^2v^3+4u^3v^2)
+
(2uv^4+4u^2v^3+6u^3v^2)
+
(2uv^4+4u^2v^3).
\end{aligned}
\]
The largest exponent of \(u\) is \(3\), so \(f=4-3=1\).  The coefficient
\(\gamma_3=10=4+6\) is the degree of the surface
\(\mathcal E(\mathcal H)\), whose two components have degrees \(4\) and
\(6\).  After passing to a general codimension $2$ subspace, the
top-dimensional eigenvector variety is a curve of degree
\(\gamma_2=10=2+4+4\).  The last summand comes from the previously hidden
component \(Y_3\). Proposition \ref{prop:genericsubspaces} still holds, because the images of $Y_1,Y_2$ and $Y_3$ under $\pi_2$ are all distinct. 
A general codimension \(3\) subspace gives
\(\gamma_1=5\) points, as expected for a generic \(5\times5\) matrix.
\end{example}

We close by presenting a vector bundle method for computing the class
\eqref{eq:classesadditive}.  Let \(Y\) be an irreducible component of
\(\mathcal J(\mathcal H)\) and let \(X=\pi_2(Y)\subset\PP^{n-1}\). We denote the tautological line bundle by
\(L=\mathcal O_{\PP^{n-1}}(-1)|_X\), so
\(L_{[x]}=\CC x\).  Consider the bundle map
\begin{equation}
\label{eq:eigenbundlemap}
 \Phi_{\mathcal H}:
 (\mathcal H \oplus \CC)\otimes\mathcal O_X
 \longrightarrow
 \operatorname{Hom}(L,\CC^n\otimes\mathcal O_X)
 \simeq L^\vee\otimes\CC^n,
\end{equation}
whose fiber at \([x]\) sends \((t,\lambda)\) to the map
\(s\mapsto H(t)s-\lambda s\).  Its fiberwise kernel is
\[
        K_{[x]}=
        \{\,(t,\lambda)\in\mathcal H \oplus \CC:H(t)x=\lambda x\,\}.
\]
We say that \(Y\) is of \emph{eigenbundle type} if these kernels have
constant dimension on \(X\), and if \(Y\simeq\PP_X(K_{\mathcal H})\), where
\(K_{\mathcal H}=\ker(\Phi_{\mathcal H})\).  If the generic fiber dimension
of \(p:Y\to X\) is \(f\), then \(\operatorname{rank}K_{\mathcal H}=f+1\).
Let
$        h=c_1(\mathcal O_X(1))$ and
$      \xi=c_1(\mathcal O_{\PP_X(K_{\mathcal H})}(1)).
$
Under the embedding \(Y\subset\PP^d\times X\), the classes \(\xi\) and \(h\)
are the restrictions of \(u\) and \(v\).  We substitute $a=\dim X+1-j$ in \eqref{eq:gamma-intersection} and use $p_*(\xi^{f+j})=s_j(K_{\mathcal H})$, where \(s_j(K_{\mathcal H})\) is the \(j\)-th Segre class. This gives
\begin{equation}
\label{eq:segrecoefficients}
        \gamma_{\dim X+1-j}(Y)
        \,\,=\,\,
        \int_X s_j(K_{\mathcal H})h^{\dim X-j}
        \,\,=\,\,\deg_X(s_j(K_{\mathcal H})),
        \qquad 0\leq j\leq \dim X.
\end{equation}
We now return to the Lie algebra setting of Section \ref{sec5}.

\begin{proposition}\label{prop:minuscule-degree-formula}
Let $\mathfrak g$ be
a minuscule representation, and put
\(f=\dim\mathfrak g-1-\dim \mathcal{E}(\mathfrak g)\).  If
the subspace
\(\mathcal L_{f+i}\subset\mathfrak g\) is general of codimension \(f+i\)
 then
\[ \quad
        \deg\mathcal E(\mathcal L_{f+i})
        \,=\,
        \deg_{\mathcal{E}(\mathfrak g)} c_i(T_{\mathcal{E}(\mathfrak g)}) \qquad
        {\rm for} \,\, \,\,0\leq i\leq\dim {\mathcal{E}(\mathfrak g)}.
\]
\end{proposition}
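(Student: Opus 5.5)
The plan is to combine Proposition~\ref{prop:genericsubspaces} with the Segre-class formula \eqref{eq:segrecoefficients}, after showing that $\mathcal J(\mathfrak g)$ is irreducible and of eigenbundle type with $K_{\mathfrak g}$ having total Chern class $c(T_X)^{-1}$, where $X=\mathcal E(\mathfrak g)$.

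First I will set up the hypotheses of Section~\ref{sec6}. By Proposition~\ref{thm:minuscule}, $X=G\cdot[v]$ is a single closed orbit, with $v$ a highest weight vector. In a minuscule representation every weight space is one-dimensional. Hence a general element of $\mathfrak t$ has $n$ distinct eigenvalues, and so does a general element of $\mathfrak g$, since it is conjugate into $\mathfrak t$. Therefore $\chi_{\mathfrak g}$ is squarefree. Corollary~\ref{cor:squarefree} then says every horizontal component has dimension $d-1$, where $d=\dim\mathfrak g$.

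Next I will show that $\mathcal J(\mathfrak g)$ is of eigenbundle type. For $[x]\in X$, the image of $\Phi_{\mathfrak g}$ at $[x]$ is $\{Hx-\lambda x\}=\mathfrak g x+\CC x$. Because $X$ is the orbit of $[x]$, this is exactly the affine cone over the embedded tangent space, $\widehat T_{[x]}X$, which has dimension $\dim X+1$. By homogeneity this rank is constant on $X$. So $K=\ker\Phi_{\mathfrak g}$ is a vector bundle of rank
\[
d+1-(\dim X+1)=d-\dim X=f+1 .
\]
Consequently $\PP_X(K)$ is irreducible of dimension $\dim X+f=d-1$. Every horizontal component maps into $X$ and lies in $\PP_X(K)$, and it has the same dimension $d-1$. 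Hence there is exactly one horizontal component, and $\mathcal J(\mathfrak g)=\PP_X(K)$. To see that this component really is horizontal, note that eigenvectors of a general element exist over $X$ with $\lambda\neq 0$, since $\det H\not\equiv 0$. With irreducibility in hand, Proposition~\ref{prop:genericsubspaces} gives
\[
\deg\mathcal E(\mathcal L_{f+i})=\gamma_{d-f-i}=\gamma_{\dim X+1-i}.
\]
Formula \eqref{eq:segrecoefficients} with $j=i$ then identifies this with $\deg_X s_i(K)$.

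The key step is computing $s(K)$. The image bundle is $\operatorname{Hom}(L,\widehat T)=L^\vee\otimes\widehat T$, where $\widehat T\subset\CC^n\otimes\mathcal O_X$ is the bundle of affine tangent spaces. This gives the exact sequence
\[
0\to K\to(\mathfrak g\oplus\CC)\otimes\mathcal O_X\to L^\vee\otimes\widehat T\to 0 .
\]
The Euler sequence restricted to $X$ reads
\[
0\to\mathcal O_X\to L^\vee\otimes\widehat T\to T_X\to 0 ,
\]
so $c(L^\vee\otimes\widehat T)=c(T_X)$. Since the middle term of the first sequence is trivial, $c(K)=c(T_X)^{-1}$, and therefore $s(K)=c(K)^{-1}=c(T_X)$. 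This yields $\deg\mathcal E(\mathcal L_{f+i})=\deg_X c_i(T_X)$.

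The main obstacle I expect is the sign and duality convention behind $p_*(\xi^{f+j})=s_j(K)$. The bundle $\mathcal O_{\PP(K)}(1)$ is the pullback of $u$ under the embedding into $\PP^d$, which corresponds to the projectivization of lines in $K$. With that convention, $p_*\xi^{f+j}$ equals the $j$-th Segre class of $K$ up to the sign $(-1)^j$, or equivalently it is the Segre class of $K^\vee$ as opposed to $K$. I would check this on the $\mathfrak{so}_4$ example, where $c_1(T_Q)$ has degree $4$ and $c_2(T_Q)$ has degree $4$, matching $\gamma_2=4$ and $\gamma_1=4$. If a sign mismatch appears, I would resolve it by dualizing consistently, replacing $K$ by $K^\vee$ throughout. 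The final degrees are positive, so the correct sign convention will make them agree.
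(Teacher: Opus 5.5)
Your proposal is correct and follows essentially the paper's route: you identify $\mathcal J(\mathfrak g)$ with $\PP_X(K_{\mathfrak g})$ over the closed orbit $X=\mathcal E(\mathfrak g)$, combine Proposition~\ref{prop:genericsubspaces} with \eqref{eq:segrecoefficients}, and use the two exact sequences to get $s(K_{\mathfrak g})=c(T_X)$; your squarefreeness and dimension-count argument just fills in the irreducibility that the paper asserts in one line. The sign worry at the end is unfounded, and the proposed fix would be wrong: since $\xi$ is $c_1(\mathcal O(1))$ on the bundle of lines in $K_{\mathfrak g}$, the tautological relation $\xi^{f+1}+c_1(K_{\mathfrak g})\xi^{f}+\cdots+c_{f+1}(K_{\mathfrak g})=0$ gives $p_*(\xi^{f+j})=[c(K_{\mathfrak g})^{-1}]_j$ with no sign, whereas replacing $K_{\mathfrak g}$ by its dual would turn $c(T_X)$ into $c(\Omega_X)$ and give $-4$ instead of $\gamma_2=4$ for $\mathfrak{so}_4$.
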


\begin{proof}
By Proposition~\ref{thm:minuscule}, the eigenvector variety \(\mathcal{E}(\mathfrak g)\) is the closed \(G\)-orbit of a highest weight vector. Hence the kernels \(K_{[x]}\) have constant dimension and
\(\mathcal J(\mathfrak g)\simeq\PP_{\mathcal{E}(\mathfrak g)}(K_{\mathfrak g})\).  Since
\(d-f=\dim \mathcal{E}(\mathfrak g)+1\), the formula \eqref{eq:segrecoefficients} gives
$
        \gamma_{d-f-i}(\mathcal J(\mathfrak g))
        =
        \deg_{\mathcal{E}(\mathfrak g)} s_i(K_{\mathfrak g}).
$
It remains to identify these Segre classes.  Let \(M_{\mathfrak g}\) be the
image of the map \(\Phi_{\mathfrak g}\) in \eqref{eq:eigenbundlemap}.  Its fiber is
\(\CC x+\mathfrak g\cdot x\), the affine tangent space to the cone over
\(\mathcal{E}(\mathfrak g)\) at \(x\).  Therefore, we have the exact sequence
\[
        0\longrightarrow\mathcal O_{\mathcal{E}(\mathfrak g)}
        \longrightarrow M_{\mathfrak g}
        \longrightarrow T_{\mathcal{E}(\mathfrak g)}
        \longrightarrow 0.
\]
On the other hand, the following sequence has trivial middle term:
\[
        0\longrightarrow K_{\mathfrak g}
        \longrightarrow (\CC\oplus\mathfrak g)\otimes\mathcal O_{\mathcal{E}(\mathfrak g)}
        \longrightarrow M_{\mathfrak g}
        \longrightarrow 0.
\]
Hence
\(s(K_{\mathfrak g})=c(M_{\mathfrak g})=c(T_{\mathcal{E}(\mathfrak g)})\).  The result now follows
from Proposition~\ref{prop:genericsubspaces}.
\end{proof}

\begin{example}[Grassmannians]
Let \(\mathcal A_{k,m}\) be the space of additive compound matrices from
Corollary~\ref{cor:addcomp}.  Its eigenvector
variety is the Grassmannian in its Pl\"ucker embedding.
Let \(\mathcal S\) and \(\mathcal Q\) be the
tautological subbundle and quotient bundle on \(\mathrm{Gr}(k,m)\). Then $\,
T_{\mathrm{Gr}(k,m)}\simeq \mathcal S^\vee\otimes\mathcal Q$.
We abbreviate $\delta=k(m-k) = {\rm dim}(\mathrm{Gr}(k,m))$.
Proposition~\ref{prop:minuscule-degree-formula} gives
\begin{equation}
\label{eq:grassmannianchernformula}
        \gamma_{\delta+1-j}(\mathcal J(\mathcal A_{k,m}))
        \,\,=\,\,
        \int_{\mathrm{Gr}(k,m)}
        c_j(T_{\mathrm{Gr}(k,m)})h^{\delta-j},
        \qquad 0\leq j\leq\delta,
\end{equation}
where \(h=c_1(\mathcal O_{\mathrm{Gr}(k,m)}(1))\).  In particular,
using $c_1(T_{\mathrm{Gr}(k,m)})=mh$, we conclude
\[
        \gamma_{\delta+1}
        \,\,=\,\,
        \deg\mathrm{Gr}(k,m)
        \,\,=\,\,
        \frac{\delta!\prod_{a=1}^{k-1}a!}
             {\prod_{a=0}^{k-1}(m-k+a)!},
        \qquad
        \gamma_{\delta}\,=\,m\,\deg\mathrm{Gr}(k,m).
\]
For $k=2$, $m=4$, the Grassmannian is a smooth quadric in
\(\PP^5\).  The normal sequence gives
\[
        c(T_{\mathrm{Gr}(2,4)})
        \,=\,
        \frac{(1+h)^6}{1+2h}
       \, =\,
        1+4h+7h^2+6h^3+3h^4,
        \qquad
        \int_{\mathrm{Gr}(2,4)}h^4=2.
\]
Hence the Chow coefficients are \(2,8,14,12,6\). Later in \eqref{eq:onebodyH} we consider additive compound matrices $T$ arising from symmetric $m \times m$ matrices. They are the fermionic one-body operators. This is a 
\(10\)-dimensional subspace of $\mathcal{A}_{k,m}$. By the same vector bundle computation,
\[
 [\mathcal J(T)]
 \,=\,
 6uv^5+12u^2v^4+14u^3v^3+8u^4v^2+2u^5v
\, \in\, A^6(\PP^{10}\times\PP^5).
\]
The largest exponent of \(u\) is \(5\), so \(f=10-5=5\).  Thus a general
codimension \(5+i\) subspace produces linear sections of \(\mathrm{Gr}(2,4)\), with degrees
$2, 8,14, 12, 6$.
For the full space \(\mathcal A_{2,4}\), the displayed
polynomial lies  in \(A^6(\PP^{16}\times\PP^5)\), and the fiber
dimension is
\(f=16-5=11\).
\end{example}

 \section{Quantum Physics}
\label{sec7}

Hamiltonians in quantum physics have highly special structure. They belong to  linear families
of very large square matrices with few
 degrees of freedom. This special structure puts restrictions on the eigenvectors of these matrices.
This was our motivation for this~paper.

We begin with fermionic Hamiltonians, which model electronic systems,
such as those described in \cite[Section 4]{FSS}. In that setting,
let $k$ denote the number of electrons and $m$ the number of orbitals. By particle-hole symmetry we may assume  that $2k \le m$, see \cite[Proposition~3.7]{FSS}.
We express the
 fermionic Hamiltonian in second quantized form \cite{Sve}:
\begin{equation}\label{eq:fH}
    H \,\,\,=\,\, T \,+ \,W \,\, = \,\,\sum_{p,q = 1}^m h_{p,q} \,a_p^\dagger a_q \,\,\,+ \!\!
    \sum_{1 \le p < r, q < s \le m} \!\!\!\!\! w_{pr, qs} \, a_p^\dagger a_r^\dagger a_s a_q.
\end{equation}
The $m \times m$ matrix  $h=(h_{p,q})$  and 
the $\binom{m}{2} \times \binom{m}{2}$ matrix $w=(w_{pr,qs})$ are symmetric and have unknown entries.
Here $a_p$ and $a_p^\dagger$ are the fermionic creation and annihilation operators. 
They generate the Fermi--Dirac algebra.  An introduction can be found in~\cite[Section~2]{Sve}.

The Hamiltonian $H$ acts on the exterior algebra $\wedge \CC^m$.
This action restricts to the exterior power $\wedge^k \CC^m$ for any fixed $k$,
because each summand 
in (\ref{eq:fH}) contains equally many creation and annihilation operators.
We therefore view $H = T+W$ as an $\binom{m}{k} \times \binom{m}{k}$ matrix. 

\begin{example}[$k=2,m=4$]
Here $H$ is a $6 \times 6$ matrix with rows and columns indexed by 
the $2$-element subsets $\{12,13,14,23,24,34\}$.
The fermionic one-body operator equals
\begin{equation}\label{eq:onebodyH}
T \,\,\,= \,\,\,\begin{bmatrix}
h_{11}\!+\!h_{22} & h_{23} & h_{24} & -h_{13} & -h_{14} & 0 \\
h_{23} & h_{11}\!+\!h_{33} & h_{34} & h_{12} & 0 & -h_{14} \\
h_{24} & h_{34} & h_{11}\!+\!h_{44} & 0 & h_{12} & h_{13} \\
-h_{13} & h_{12} & 0 & h_{22}\!+\!h_{33} & h_{34} & -h_{24} \\
-h_{14} & 0 & h_{12} & h_{34} & h_{22}\!+\!h_{44} & h_{23} \\
0 & -h_{14} & h_{13} & -h_{24} & h_{23} & h_{33}\!+\!h_{44}
\end{bmatrix}.
\end{equation}
The characteristic polynomial $\chi_H$ is irreducible 
in $\CC[\lambda,h]$ of degree $6$, and it has $956$ terms. 
In this small case,  the fermionic two-body operator is
a general  symmetric $6 \times 6$ matrix:
\begin{equation}\label{eq:twobodyW}
W \,\,\,= \,\,\,\begin{bmatrix}
\, w_{12, 12} & \, w_{12, 13} & \, w_{12, 14} & \, w_{12, 23} & \, w_{12, 24} & \, w_{12, 34} \,\, \\
\, w_{12, 13} & \, w_{13, 13} & \, w_{13, 14} & \, w_{13, 23} & \, w_{13, 24} & \, w_{13, 34} \, \, \\
\, w_{12, 14} & \, w_{13, 14} & \, w_{14, 14} & \, w_{14, 23} & \, w_{14, 24} & \, w_{14, 34} \, \, \\
\, w_{12, 23} & \, w_{13, 23} & \, w_{14, 23} & \, w_{23, 23} & \, w_{23, 24} & \, w_{23, 34} \,\, \\
\, w_{12, 24} & \, w_{13, 24} & \, w_{14, 24} & \, w_{23, 24} & \, w_{24, 24} & \, w_{24, 34} \,\, \\
\, w_{12, 34} & \, w_{13, 34} & \, w_{14, 34} & \, w_{23, 34} & \, w_{24, 34} & \, w_{34, 34} \,\, \\
\end{bmatrix}.
\end{equation}
\end{example}

The special structure of the fermionic Hamiltonian $H=T+W$ appears only for larger values of $k$ and $m$. In Theorem \ref{thm:ferm1body} we identify the eigenvector variety 
$\mathcal{E}(T)$.


\begin{proof}[Proof of Theorem \ref{thm:ferm1body}]
    The fermionic one-body operators $T$ are additive compound matrices:
\begin{equation}
\label{eq:exterioraction2}
    T \cdot (v_1 \wedge \cdots \wedge v_k) \,\,= \,\,
    \sum_{p,q=1}^m h_{p,q} \, a_p^\dagger a_q v_1 \wedge \cdots \wedge v_k 
    \,\,=\,\, \sum_{\ell = 1}^k v_1 \wedge \cdots \wedge h v_\ell \wedge \cdots \wedge v_k.
\end{equation}
    See (\ref{eq:exterioraction}).
     Hence the matrices $T$ form a subspace of $\mathcal{A}_{k,m}$.  In Lie theory notation, this subspace is
     the image of the map $\mathrm{Sym}^2(\CC^m) \to \mathfrak{gl}(\wedge^k \CC^m)$.
          Corollary \ref{cor:addcomp} implies that the eigenvector variety
     $\mathcal{E}(T)$ is contained in $ \mathcal{E}(\mathcal{A}_{k,m}) = 
     \mathrm{Gr}(k,m)$.      We claim that  equality holds.
           
            The space of symmetric matrices $\mathrm{Sym}^2(\CC^m) \subseteq \mathfrak{gl}_m$ is not a Lie algebra. 
            However, the orthogonal group $\mathrm{O}_m$ acts on $\mathrm{Sym}^2(\CC^m)$ by conjugation, and a generic symmetric matrix is conjugate to a diagonal matrix. 
            The proof of Theorem~\ref{thm:intro-lie} applies in this setting. 
            Since $\mathrm{O}_m$ acts transitively on nondegenerate $k$-dimensional subspaces of $\CC^m$,
            we conclude that 
    the eigenvector variety $\mathcal{E}(T)$ is
     $\overline{\mathrm{O}_m \cdot [e_1 \wedge \cdots \wedge e_k]} = \mathrm{Gr}(k,m)$.
\end{proof}

Next we consider the two-body operator \(W\). Every
one-body operator is also represented by a two-body operator. The embedding sends
\(h\in\CC^{m \times m}\) to the following operator on \(\wedge^2\CC^m\)
\[
   \iota(h)(u\wedge v)\,=\,\frac{1}{k-1}(hu\wedge v+u\wedge hv).
\]
Substituting \(\iota(h)\) into the two-body construction below gives the earlier action of \(h\) on \(\wedge^k\CC^m\). Thus  \(H=T+W\) is itself a
two-body operator on \(\wedge^k\CC^m\). We can write \(W\) explicitly as 
\begin{equation}\label{eq:2bodyop}
  \! W\,:\, \wedge^k \CC^m \to \wedge^k \CC^m, 
   \,\,\, v_1 \wedge \cdots \wedge v_k \,\,\mapsto \sum_{1 \le j < \ell \le k}\! (-1)^{\ell - j - 1} v_1 \wedge \cdots \wedge w(v_j \wedge v_\ell) \wedge \cdots \wedge v_k.
\end{equation}
 This construction is analogous to the additive compound matrices
 in (\ref{eq:exterioraction}) and (\ref{eq:exterioraction2}).
   Note that the case $k=1$ is void, 
and  $W = w$ if $k = 2$, as in (\ref{eq:twobodyW}).
From now on, we assume $k \geq 3$.

Let $\mathcal{W}_{k,m}$ denote the space of two-body operators.
This is isomorphic to ${\rm Sym}^2( \wedge^2 \CC^m)$, so its
dimension equals $\binom{\binom{m}{2} + 1}{2}$. It is a subspace
of ${\rm Sym}^2(\wedge^k \CC^m)$.
Like the space of one-body operators, 
$\mathcal{W}_{k,m}$ is not a Lie algebra. But,
its eigenvector variety is even more intricate.



\begin{proposition} \label{prop:2body}
  The eigenvector variety $\mathcal{E}(\mathcal{W}_{k,m})$  is irreducible, and its dimension satisfies
  \begin{equation}
  \label{eq:twobodydim}
  {\rm dim} \,\mathcal{E}(\mathcal{W}_{k,m}) \,\,\leq \,\,
    \min\left(\binom{m}{k} - 1\,, \binom{\binom{m}{2}+1}{2}-2\right).
  \end{equation}
  In particular,   if $k \ge 7$ and $m \ge 15$ then
  $\mathcal{E}(\mathcal{W}_{k,m})$ is a proper subvariety of  $\,\PP\bigl(\wedge^k \CC^m\bigr)$.
\end{proposition}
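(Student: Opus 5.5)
The plan has three parts: irreducibility, the dimension bound, and the numerical consequence.

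\emph{Irreducibility.} I would show that the characteristic polynomial $\chi_{\mathcal W_{k,m}}$ is irreducible. Proposition~\ref{prop:dominant-components} then makes $\mathcal J(\mathcal W_{k,m})$ irreducible, and Proposition~\ref{prop:eigenimage} makes its image $\mathcal E(\mathcal W_{k,m})$ irreducible. For the characteristic polynomial, the cleanest route is monodromy. The parameter space $\mathcal W_{k,m}\simeq \mathrm{Sym}^2(\wedge^2\CC^m)$ is a vector space containing the one-body operators $\iota(h)$ (up to the factor $1/(k-1)$, which does not matter). Restricting to a line through a generic diagonal one-body operator $T_0=\mathrm{diag}$-type, with distinct eigenvalue sums $\lambda_{i_1}+\cdots+\lambda_{i_k}$, I would argue that the monodromy group of the eigenvalues of $W$ over the complement of the discriminant is transitive on the $\binom mk$ eigenvalues.

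To get transitivity, I would use the $\mathrm{O}_m$ (or even $\mathrm{GL}_m$-by-conjugation on $w$) equivariance together with small perturbations by two-body terms $w$ that couple a pair of basis Slater determinants $e_I,e_J$ with $|I\cap J|=k-2$. Each such coupling can exchange two eigenvalues around a simple branch point. Since every pair of $k$-subsets is connected by a chain of moves changing two indices, the transpositions generate a transitive group, and hence $\chi$ is irreducible.

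An alternative, if monodromy is messy, avoids the characteristic polynomial entirely. I would show directly that the incidence variety $\{(W,\lambda,x):Wx=\lambda x,\ \lambda\neq 0\}$ has a unique horizontal component. The natural candidate is the closure of the $\mathrm{O}_m$-translates of the configuration used in the proof of Theorem~\ref{thm:ferm1body}. The connectivity of the moves above is again the key input. I expect this irreducibility step to be the main obstacle.

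\emph{Dimension bound.} The first term of the minimum is trivial, since the ambient space is $\PP^{\binom mk-1}$. For the second, set $d=\dim\mathcal W_{k,m}=\binom{\binom m2+1}{2}$. The identity of $\wedge^k\CC^m$ lies in $\mathcal W_{k,m}$: take $w=\tfrac{1}{\binom k2}\,\mathrm{id}_{\wedge^2}$, which acts on a Slater determinant by multiplication by $\binom k2\cdot\tfrac{1}{\binom k2}$, with signs handled by the formula \eqref{eq:2bodyop}. This is easy to verify on $e_1\wedge\cdots\wedge e_k$. Given irreducibility, $\chi$ is squarefree provided the general $W$ has distinct eigenvalues, and this holds already for diagonal one-body $T_0$ with generic $\lambda_i$. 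Corollary~\ref{cor:d-2} then gives $\dim\mathcal E\le d-2$.

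\emph{Properness.} It remains to check that $\binom{\binom m2+1}{2}-2<\binom mk-1$ for $k\ge7$ and $m\ge15$; here the intended range is $7\le k\le m/2$, per the particle-hole convention. At $k=7$, $m=15$ the left side is $\binom{106}{2}-2=5563$ while $\binom{15}{7}=6435$. An induction in $m$ finishes the check. For fixed $k\le m/2$, the ratio $\binom{m+1}{k}/\binom mk=(m+1)/(m+1-k)$ beats the growth of the quartic $\binom{\binom m2+1}2\sim m^4/8$. In $k$, the value $\binom mk$ increases up to $k=\lfloor m/2\rfloor$. Both facts are routine estimates.
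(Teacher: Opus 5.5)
Your proposal is correct. The dimension bound and the properness step match the paper: the identity lies in $\mathcal W_{k,m}$, irreducibility gives squarefreeness, and then Corollary~\ref{cor:d-2} applies. Your explicit check of the inequality, including the caveat that the range must be read with $2k\le m$, fills in what the paper only asserts.

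The real difference is how you obtain irreducibility of $\chi_{\mathcal W_{k,m}}$. The paper never studies monodromy inside $\mathcal W_{k,m}$. It restricts $\chi_{\mathcal W_{k,m}}$ to the linear subspace of one-body operators, where the restriction is $\chi_T$. Both are homogeneous of the same degree and $\chi_T\neq 0$, so a nontrivial factorization of $\chi_{\mathcal W_{k,m}}$ would restrict to a nontrivial factorization of $\chi_T$. And $\chi_T$ is irreducible because permuting the eigenvalues of a generic symmetric $h$ acts transitively on the sums $\lambda_{i_1}+\cdots+\lambda_{i_k}$; the paper phrases this as transitivity on weights.

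You instead build transitive monodromy directly in the two-body family by local exchanges. This works, but making it rigorous takes two more ingredients. First, a base point on the hyperplane $\mu_I=\mu_J$ of diagonal one-body operators where this collision is an exact double eigenvalue; other pairs collide there too, but at different values, so they do not interfere. Second, a two-parameter perturbation, a detuning $\epsilon$ together with the coupling $s$, so that the discriminant of the $2\times 2$ spectral block begins with the non-square form $\epsilon^2+s^2$. Only then do the two branches form one irreducible local germ, and hence lie in one global component.

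Note also that $\mathcal W_{k,m}$ already contains the one-body couplings with $|I\cap J|=k-1$ via $\iota$. Run with those, your argument is exactly a proof that $\chi_T$ is irreducible, after which the paper's restriction step finishes in one line.

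So the paper's route buys brevity by pushing all the monodromy into the classical one-body family. Yours is self-contained, needs no knowledge of the eigenvalue monodromy of generic symmetric matrices, and shows explicitly which couplings generate the transitive action. A small slip: $\mathrm{GL}_m$ does not preserve $\mathcal W_{k,m}$ under conjugation, only $\mathrm{O}_m$ does, but you never use this.
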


\begin{proof}
The eigenvector variety lives in $\PP(\wedge^k\CC^m)$, so its dimension is at most
$\binom{m}{k}-1$. The second upper bound comes from Corollary~\ref{cor:d-2}: the
space $\mathcal W_{k,m}$ contains the identity matrix and has dimension
$\binom{\binom{m}{2}+1}{2}$. The displayed inequality is strict for $k\ge7$ and $m\ge15$.

It remains to justify irreducibility. By the inclusion of one-body operators into the
two-body space, the restriction of $\chi_{\mathcal W_{k,m}}$ to that linear subspace is
the characteristic polynomial $\chi_T$. The one-body operators define a minuscule representation. Hence the Weyl group of $\mathrm{O}_m$ acts transitively on the weights. Since $\chi_T$ and $\chi_{\mathcal W_{k,m}}$ are both nonzero homogeneous polynomials, then every decomposition of the latter results in a decomposition of the former. 
This implies $\chi_{\mathcal W_{k,m}}$ is also irreducible.
Proposition~\ref{prop:dominant-components}
then gives an irreducible horizontal incidence variety, and its image
$\mathcal E(\mathcal W_{k,m})$ is irreducible.
\end{proof}

\begin{conjecture}\label{conj:dim}
Equality holds in~\eqref{eq:twobodydim}. Equivalently, the generic fiber of the projection 
$\mathcal{J}(\mathcal{W}_{k,m}) \to \mathcal{E}(\mathcal{W}_{k,m})$ is one-dimensional, spanned 
by the identity direction in $\mathcal{W}_{k,m}$.
\end{conjecture}

This conjecture is supported by the following numerical evidence.
We compute the dimension of $\mathcal{E}(\mathcal{W}_{k,m})$ by taking a generic matrix 
in $\mathcal{W}_{k,m}$ and computing an eigenvector $x$ via a standard eigenvalue 
solver. We then determine the rank of the matrix $M(x)$ evaluated at $x$ using Julia~\cite{julia}. For $k$ and $m$ where the dimension of $\mathcal W_{k,m}$ does not exceed that 
of $\mathbb{P}(\wedge^k \mathbb{C}^m)$, numerical computations show that $\mathcal E(\mathcal W_{k,m})$ fills 
the whole ambient space.
The results where we obtain proper subvarieties are displayed in 
Table~\ref{tab:conjecture}. Our Julia code is available at \cite{code}.

\begin{table}[H]
\centering
\renewcommand{\arraystretch}{1.2}
\begin{tabular}{ccccc}
\hline
$k$ & $m$ & $\dim \mathbb{P}(\wedge^k \mathbb{C}^m)$ & 
$\dim \mathcal W_{k,m}$ & $\dim \mathcal E(\mathcal W_{k,m})$ \\
\hline
7 & 15 & 6434 & 5565 & 5563 \\
6 & 16 & 8007 & 7260 & 7258 \\
7 & 16 & 11439 & 7260 & 7258  \\
8 & 16 & 12869 & 7260 & 7258  \\
6 & 17 & 12375 & 9316 & 9314 \\
7 & 17 & 19447 & 9316 & 9314 \\
8 & 17 & 24310 & 9316 & 9314 \\
\hline
\end{tabular}
\caption{Numerical verification of Conjecture~\ref{conj:dim}. 
The computed dimension in the last column is $\dim \mathcal W_{k,m} - 2$, 
the prediction of~\eqref{eq:twobodydim}, with generic fiber dimension exactly one.}
\label{tab:conjecture}
\end{table}

We next consider bosonic systems, where $k$ is the number of bosons and $m$ is the number of sites. 
The general one-body operator $S$ arises from a symmetric $m \times m$ matrix $h = (h_{i,j})$:
\begin{equation} 
\label{eq:symH} S \,=\, \sum_{i,j = 1}^m h_{i,j} \,b_i^\dagger b_j \, \qquad {\rm where} \quad
b_i^\dagger \mapsto z_i \,\,\, {\rm and}\,\,\, b_i \mapsto \frac{\partial}{\partial z_i}. 
\end{equation}
The bosonic operators generate a Weyl algebra, which 
acts on $\mathrm{Sym}^k(\CC^m)$ as in (\ref{eq:operator}). Again,
this is an $\binom{m+k-1}{k} \times \binom{m + k - 1}{k}$ matrix, but now
its entries are linear forms in $\binom{m+1}{2}$ variables~$h_{i,j}$.

\begin{theorem} \label{thm:sympower2}
The eigenvector variety for bosonic one-body operators is the union of the
projective $\mathrm{SO}_m$-orbit closures of degree $k$ monomials. This is an irreducible decomposition
\begin{equation}
\label{eq:sympower2}
\cE(S) \,=\, \bigcup_{\substack{\mu \vdash k\\ \ell(\mu) \leq m}}
\overline{\mathrm{SO}_m\cdot [z^\mu]}.
\end{equation}
\end{theorem}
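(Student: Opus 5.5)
We follow the argument used for Theorem~\ref{thm:ferm1body}, now applied to the symmetric power representation instead of the exterior power. The space $S$ is the image of $\mathrm{Sym}^2(\CC^m)\subset\mathfrak{gl}_m$ under $\rho:\mathfrak{gl}_m\to\mathfrak{gl}(\mathrm{Sym}^k\CC^m)$. The orthogonal group $\mathrm{O}_m$ acts on symmetric matrices by conjugation $h\mapsto ghg^T=ghg^{-1}$. This action is compatible with $\rho$: $\rho(ghg^{-1})=g\,\rho(h)\,g^{-1}$, where $g$ acts on $\mathrm{Sym}^k\CC^m$ by substitution. As in Proposition~\ref{prop:Ginv}, $\cE(S)$ is therefore $\mathrm{O}_m$-invariant.

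\textbf{Step 1: generic elements.} A generic complex symmetric matrix $h$ has $m$ distinct eigenvalues and is orthogonally diagonalizable, $h=g\,\mathrm{diag}(\lambda_1,\ldots,\lambda_m)\,g^{-1}$ with $g\in\mathrm{O}_m$. The diagonalizability holds because on the open set with distinct eigenvalues the eigenvectors are non-isotropic and pairwise orthogonal. We may also replace $g$ by $g\cdot\mathrm{diag}(-1,1,\ldots,1)$ to assume $g\in\mathrm{SO}_m$. For generic diagonal $t=\mathrm{diag}(\lambda)$, the values $\sum a_i\lambda_i$ are distinct for distinct $a$ with $|a|=k$. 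So the eigenvectors of $\rho(t)$ are exactly the monomials $z^a$, as in the proof of Theorem~\ref{thm:intro-lie}. Hence the eigenvectors of a generic $\rho(h)$ are the points $g\cdot[z^a]$.

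\textbf{Step 2: the equality.} By Step~1 we get $\cE(S)\subseteq\bigcup_a\overline{\mathrm{SO}_m\cdot[z^a]}$. Conversely, every $z^a$ is an eigenvector of a generic diagonal $\rho(t)$, so $g\cdot[z^a]$ is an eigenvector of a generic element $\rho(gtg^{-1})$ of $S$. Here we need that the family $\{gtg^{-1}\}$ is dense in $\mathrm{Sym}^2$, which holds by dimension count. So these points lie in $\cE(S)$, which is closed. Permuting coordinates by a signed permutation matrix in $\mathrm{SO}_m$ sends $[z^a]$ to $[z^\mu]$, where $\mu$ is the sorted exponent vector. This yields \eqref{eq:sympower2}, indexed by partitions $\mu\vdash k$ with $\ell(\mu)\le m$.

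\textbf{Step 3: irreducible decomposition.} Each orbit closure is irreducible because $\mathrm{SO}_m$ is connected. The main obstacle is showing that no orbit closure is contained in another, so that all terms in \eqref{eq:sympower2} are genuine components. This contrasts with Theorem~\ref{thm:sympower}, where $\mathrm{GL}_m$-orbit closures are nested.

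I would argue via the horizontal incidence variety and Proposition~\ref{prop:eigenimage}. The first task is to show that $\chi_S$ factors with one factor per $\mathrm{SO}_m$-orbit type. Over $\mathrm{Sym}^2$, the eigenvalue $\sum_i a_i\lambda_i(h)$ has monodromy equal to the full symmetric group $S_m$ acting on the $\lambda_i$. So the factors correspond to $S_m$-orbits of exponent vectors, that is, to partitions $\mu$. Each factor then gives a component $Y_\mu$ of $\cJ(S)$ of dimension $d-1=\binom{m+1}{2}-1$, by Corollary~\ref{cor:squarefree}.

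The second task is to show the images $\pi_2(Y_\mu)$ are not nested. For this I would compute $\dim\overline{\mathrm{SO}_m\cdot[z^\mu]}$ directly, as $\dim\mathrm{SO}_m$ minus the dimension of the stabilizer of $[z^\mu]$. The stabilizer is the subgroup of $\mathrm{SO}_m$ preserving each block of coordinates with equal exponent, together with the zero-exponent block. Its dimension is $\sum_j\binom{c_j}{2}$, where $c_j$ are the block sizes. Then I would check non-containment as follows. A generic point of $\overline{\mathrm{SO}_m\cdot[z^\mu]}$ is a product of linear forms that are pairwise orthogonal and non-isotropic, raised to the powers $\mu_i$. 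Its factorization type $\mu$ can be read off from the polynomial itself. Limits of such products within another orbit closure either keep the factorization type or acquire isotropic or coincident factors. In either case the limit has a different factorization type from a generic point of a distinct orbit, so that generic point cannot be such a limit. This factorization-type invariant is the step I expect to need the most care, especially for the case $\mu$ versus $\sigma$ with $\mu\le\sigma$ in the refinement order, where the $\mathrm{GL}_m$ orbit closures are nested.
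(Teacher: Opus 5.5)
Your Steps 1 and 2 follow the paper's argument: orthogonal diagonalization of a generic symmetric $h$, monomials as weight vectors of the diagonal torus, and $\mathrm{O}_m$-orbits agreeing projectively with $\mathrm{SO}_m$-orbits. Step 3, the non-containment of the orbit closures, has a genuine gap.

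First, your stabilizer count is wrong. A block of equal \emph{nonzero} exponents contributes only finitely many symmetries. For instance, the rotation $z_1\mapsto az_1+bz_2$, $z_2\mapsto -bz_1+az_2$ sends $z_1z_2$ to a multiple of itself only if $ab=0$. Only the zero-exponent block contributes, so $\dim\overline{\mathrm{SO}_m\cdot[z^\mu]}=\binom{m}{2}-\binom{m-\ell(\mu)}{2}$. Your formula would make $\overline{\mathrm{SO}_3\cdot[z_1z_2z_3]}$ a point rather than a threefold. In any case, neither dimensions nor the factorization of $\chi_S$ can decide non-containment. Dimensions fail for the Veronese surface versus the two threefolds when $m=k=3$. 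The factorization fails because distinct components of $\mathcal J$ may have nested images, exactly as in Theorem~\ref{thm:sympower}.

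Second, and more importantly, your factorization-type argument fails precisely in the refinement case you flag. Suppose two factors of $\prod_j m_j^{\sigma_j}$ could coalesce to a \emph{non-isotropic} linear form. Then the limit would have exactly the coarser type $\mu$ of a generic point of $\overline{\mathrm{SO}_m\cdot[z^\mu]}$, so ``isotropic or coincident factors'' does not produce a different type.

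The missing idea is that orthogonality survives limits. Put $s=\ell(\sigma)$. The tuples $([m_1],\ldots,[m_s])\in(\PP^{m-1})^s$ with $m_i^Tm_j=0$ for $i\neq j$ form a closed set. Multiplication $(\PP^{m-1})^s\to\PP(\mathrm{Sym}^k\CC^m)$ is a morphism of projective varieties, so the image of that set is closed. Hence every point of $\overline{\mathrm{SO}_m\cdot[z^\sigma]}$ has the form $\prod_j m_j^{\sigma_j}$ with pairwise orthogonal $m_j$. If $[m_i]=[m_j]=[\ell]$ with $i\neq j$, then $\ell^T\ell=0$, so coincident factors are forced to be isotropic.

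Now suppose $[z^\mu]$ lay in $\overline{\mathrm{SO}_m\cdot[z^\sigma]}$. Unique factorization would make every $m_j$ proportional to some coordinate form $z_i$, which is non-isotropic. Hence the $[m_j]$ are pairwise distinct, and comparing multiplicities gives $\sigma=\mu$. This is what the paper means by saying that distinct orthogonal factors cannot specialize to the same non-isotropic factor.
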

\begin{proof}
The proof is analogous to the proof of Theorem~\ref{thm:sympower}. Generic symmetric matrices are
orthogonally diagonalizable on a dense open set, so the proof of Theorem~\ref{thm:intro-lie} applies. The monomials are the weight vectors
for the diagonal torus, and hence the eigenvector variety consists of their $\mathrm{O}_m$-orbit closures. These orbits coincide with the $\mathrm{SO}_m$-orbits, since the extra component differs from \(\mathrm{SO}_m\) by a reflection that fixes the monomial projectively. This gives \eqref{eq:sympower2}. 
All the orbit closures are irreducible. They are also not contained in one another, since
distinct orthogonal factors cannot specialize to the same nonisotropic factor. Hence \eqref{eq:sympower2} is an~irreducible~decomposition.
\end{proof}

In quantum physics, one is most interested in the 
{\em ground state} of the Hamiltonian.  This is the
eigenvector corresponding to the smallest eigenvalue.
If  $a_1 \le \dots \le a_m$ are the eigenvalues of the symmetric matrix $h$, then
$k a_1$ is the smallest eigenvalue of \eqref{eq:symH}.
The corresponding eigenvector lies in the 
${\rm SO}_m$-orbit of the pure power $z_1^k$, which 
 is the Veronese variety $\nu_k(\PP^{m - 1}) = \overline{{\rm SO}_m\, z_1^k}$.
This corresponds to the partition $\mu = (k,0,\ldots,0)$ in (\ref{eq:sympower2}).

\begin{corollary}\label{cor:veronese}
The Veronese variety $\nu_k(\PP^{m - 1}) $ is the  irreducible component of the eigenvector variety \eqref{eq:sympower2}
that corresponds to the ground states of real bosonic one-body~operators~(\ref{eq:symH}).
\end{corollary}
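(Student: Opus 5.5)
The plan is to combine Theorem~\ref{thm:sympower2} with an explicit computation of which eigenvectors of a general real symmetric $h$ realize the minimal eigenvalue of $S$. First, by the discussion preceding the statement, diagonalize a general real symmetric $h$ as $h = O\,\mathrm{diag}(a_1,\ldots,a_m)\,O^T$ with $O\in \mathrm{SO}_m(\mathbb R)$ and $a_1<a_2<\cdots<a_m$ distinct; genericity gives distinctness. The operator $S$ then acts on the monomial basis in the rotated coordinates $y=O^Tz$ diagonally: $y^a$ is an eigenvector with eigenvalue $\sum_i a_i\, a_i$-weighted sum $\sum_i \alpha_i a_i$ for $|\alpha|=k$, using \eqref{eq:operator} exactly as in \eqref{eq:vz}.

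Second, minimize $\sum_i \alpha_i a_i$ over $\alpha\in\mathbb Z_{\ge0}^m$ with $|\alpha|=k$. Since $a_1$ is strictly smallest, the unique minimizer is $\alpha=(k,0,\ldots,0)$ with value $k a_1$. So the ground-state eigenspace is one-dimensional and spanned by $y_1^k=(v_1\cdot z)^k$, where $v_1$ is the (real) eigenvector of $h$ for $a_1$. Hence the ground state of a general real $S$ lies in $\mathrm{SO}_m\cdot[z_1^k]\subseteq \nu_k(\PP^{m-1})$. Conversely, every real unit vector $v_1$ is the lowest eigenvector of some real symmetric $h$, for example $h=-v_1v_1^T$ perturbed generically, so the ground states sweep out all $[(v\cdot z)^k]$ with $v$ real. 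Since real points are Zariski dense in $\PP^{m-1}$, the Zariski closure of the set of ground states is $\nu_k(\PP^{m-1})=\overline{\mathrm{SO}_m\cdot[z_1^k]}$.

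Third, identify this closure with the component $\mu=(k)$ of \eqref{eq:sympower2}. Theorem~\ref{thm:sympower2} already asserts that the decomposition is irreducible, so $\overline{\mathrm{SO}_m\cdot[z^{(k)}]}$ is one of its irreducible components and not contained in another. The remaining check is that the complex orbit closure $\overline{\mathrm{SO}_m\cdot[z_1^k]}$ equals the full Veronese variety. The $\mathrm{SO}_m(\CC)$-orbit of $[z_1]$ is the set of nonisotropic points, which is dense in $\PP^{m-1}$. Applying $\nu_k$ then gives density in $\nu_k(\PP^{m-1})$, which is closed.

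The main obstacle is interpretive rather than technical: ``ground states'' only makes sense over $\mathbb R$, where eigenvalues are ordered. One must therefore pass from real generic $h$ to a Zariski closure and argue that this closure is the complex component. I will handle this with the density of real points, as above. I will also note that for non-generic $h$ with $a_1$ repeated, ground states may lie off the Veronese variety; this is why genericity is imposed.
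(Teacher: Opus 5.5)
Your proposal is correct and follows the paper's own argument, which is the paragraph preceding the corollary: the smallest eigenvalue of $S$ is $k a_1$, its eigenvector $(v_1\cdot z)^k$ lies in $\mathrm{SO}_m\cdot[z_1^k]$, and this orbit's closure is $\nu_k(\PP^{m-1})$, the $\mu=(k)$ component of \eqref{eq:sympower2}; you also spell out what the paper leaves implicit, namely the uniqueness of the minimizing exponent, the Zariski density of real ground states, and that the orbit of $[z_1]$ is the nonisotropic locus. One small fix: a generic perturbation of $-v_1v_1^T$ moves its lowest eigenvector, so take $h=-v_1v_1^T$ itself, or $h=O\,\mathrm{diag}(a_1,\ldots,a_m)\,O^T$ with $O\in\mathrm{SO}_m(\mathbb{R})$ having first column $v_1$, since only strict minimality of $a_1$ is needed.
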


We conclude with a well-known physics model, namely
 the {\em Bose-Hubbard Hamiltonian}:
\begin{equation}\label{eq:BH}
    H \,\,=\,\, T + W \,\, = \,\, -t\sum_{i = 1}^{m - 1} (b_i^\dagger b_{i + 1} + b_{i + 1}^\dagger b_{i}) \,+\, \sum_{i = 1}^m \varepsilon_i b_i^\dagger b_i \,+\, \frac{U}{2} \sum_{i = 1}^m b_i^\dagger b_i^\dagger b_i b_i.
\end{equation}
See e.g.~\cite{FM}.
As in (\ref{eq:fH}),
this is the sum of a one-body operator (for $U=0$) and a two-body operator $W = H-T$.
Note that $T$ is the specialization of (\ref{eq:symH}) to the 
$m \times m$ {\em Jacobi matrix}
\begin{equation}\label{eq:onepartBH} h \,\, = \,\,
    \begin{bmatrix}
    \varepsilon_1 & -t & 0 & \cdots & 0\\
    -t & \varepsilon_2 & -t & \cdots & 0\\
    0 & -t & \varepsilon_3 & \cdots & 0\\
    \vdots & \vdots & \vdots & \ddots & \vdots\\
    0 & 0 & 0 & \cdots & \varepsilon_m\\
\end{bmatrix}.
\end{equation}
If $m = 2$ then $h$ is a general symmetric matrix. By
Theorem \ref{thm:sympower2}, the eigenvector variety
is a curve in $\PP^k$ with $\lfloor \frac{k+2}{2}\rfloor$
irreducible components, indexed by 
$\mu =(i,k-i)$ with $2i \geq k$.

\begin{example}[$m\!=\!2,k\!=\!4$]
This is a variant of Example \ref{ex:addsym}.
The eigenvector curve of $T$ has three irreducible components in $\PP^4$.
They are  given parametrically by the binary quartics 
$$(\alpha z_1 + \beta z_2)^4 \, ,\quad
 (\alpha z_1 + \beta z_2)^3 (\beta z_1 - \alpha z_2) \quad {\rm and} \quad
 (\alpha z_1 + \beta z_2)^2 (\beta z_1 - \alpha z_2)^2 . $$
For instance, the prime ideal of the third component is
$\langle x_1-x_5, x_2+x_4, x_4^2-4x_3 x_5-8 x_5^2 \rangle$.
All three curves lie on the eigenvector surface $\mathcal{E}(H)$
of the full Bose-Hubbard Hamiltonian $H$. That irreducible surface
has degree $10$ in $\PP^4$, and its ideal is generated by five quartics.
\end{example}

If $m \geq 3$ then $h$ is a special symmetric matrix, and
$\mathcal{E}(T)$ is strictly contained  in $\cE(S)$,~\eqref{eq:sympower2}.

\begin{example}[$m=k=3$]
This is a variant of Example \ref{ex:33eigenvectors}.
The eigenvector variety $\cE(S)$ from (\ref{eq:sympower2}) lives in $\PP^9$.
It has three irreducible components in $\PP^9$, namely the Veronese surface
$\overline{\mathrm{SO}_3\cdot [z_1^3]}$ and the two
threefolds  $\overline{\mathrm{SO}_3\cdot [z_1^2z_2]}$ and $\overline{\mathrm{SO}_3\cdot [z_1z_2z_3]}$.
The last one is the
classical {\em variety of self-polar triangles} \cite[Section 2.1.3]{Dol}.
For the Bose-Hubbard one-body operator $T$, $\mathcal{E}(T)$ has  three components of dimension two.
One of them is  the Veronese surface, and the other two are
surfaces of degrees $26$ and $5$ inside the threefolds above.
 The variety $\mathcal{E}(H)$ for the full Bose-Hubbard Hamiltonian $H$ is an irreducible
threefold of degree $178$ in $\PP^9$.
\end{example}

We saw that eigenvector varieties for Hamiltonians of 
quantum systems are both intrinsically interesting and useful for 
applications. Our work opens several directions for further study. 
On the theoretical side, the most pressing one is 
Conjecture~\ref{conj:dim}, which would give a sharp dimension formula 
for the eigenvector variety of the fermionic two-body operator. More 
broadly, it would be interesting to learn which projective varieties 
arise as eigenvector varieties of some linear space $\cH$, and to 
understand how the geometry of $\mathcal{E}(\cH)$ reflects the 
algebraic structure of $\cH$. On the applied side, the connection to 
coupled cluster theory~\cite{FSS} suggests that eigenvector varieties 
may encode obstructions to efficient approximation of ground states, a topic we plan to pursue. Finally, the bosonic two-body 
operator, the analogue of $\mathcal{W}_{k,m}$ in the bosonic 
setting, was not studied here and presents a natural next~step.

\bigskip \bigskip

\noindent
{\bf Acknowledgements.} The third author thanks
KTH Stockholm for hosting her during this project.
We are grateful to Otto Schmidt for helping us with the
bosonic Hamiltonians.

\bigskip
\bigskip
		
\footnotesize
\noindent {\bf Authors' addresses:}
		
\noindent Sandra Di Rocco, KTH Stockholm \hfill \url{dirocco@math.kth.se}
				
\noindent  Bernd Sturmfels, MPI-MiS Leipzig   \hfill \url{bernd@mis.mpg.de}

\noindent Svala Sverrisd\'ottir, MPI-MiS Leipzig \hfill \url{sverris@mis.mpg.de}

\end{document}